\newtheorem{theorem}{Theorem}[section]
\newtheorem{theoremz}{Theorem}
\newtheorem{corol}[theorem]{Corollary}
\newtheorem{definition}[theorem]{Definition}
\newtheorem{example}[theorem]{Example}
\newtheorem{lemma}[theorem]{Lemma}
\newtheorem{prop}[theorem]{Proposition}
\newtheorem{remark}[theorem]{Remark}
\newenvironment{proof}[1][Proof]{\noindent\textbf{#1.} }{\ \rule{0.5em}{0.5em}}
\numberwithin{equation}{section}
\newcommand{\Rr}{\mathbb R}
\newcommand{\set}[1]{\left\{#1\right\}}
\newcommand{\eps}{\varepsilon}
\newcommand{\To}{\longrightarrow}
\newcommand{\rmap}{\longrightarrow}
\newcommand{\X}{\ensuremath{\mathfrak{X}}}
\renewcommand{\d}{\mathrm d}
\newcommand{\C}{\ensuremath{\mathrm{C}^{\infty}}}
\DeclareMathOperator{\GL}{GL}
\DeclareMathOperator{\pr}{pr}
\renewcommand{\hom}{\mathrm{Hom}}
\DeclareMathOperator{\modular}{mod}     
\renewcommand{\mod}{\modular}
\newcommand{\Jet}{\text{\rm J}}
\newcommand{\jet}{\text{\rm j}}
\newcommand{\ve}{\vartheta}          
\newcommand{\G}{\mathcal{G}}            
\renewcommand{\H}{\mathcal{H}}          
\newcommand{\A}{A}                      
\newcommand{\al}{\alpha}                
\newcommand{\be}{\beta}                 
\newcommand{\Lie}{\mathcal{L}}          
\newcommand{\Ad}{\text{\rm Ad}\,}       
\newcommand{\ad}{\text{\rm ad}\,}       
\newcommand{\tto}{\rightrightarrows}    
\newcommand{\B}{\mathfrak{g}}                    
\begin{document}
\title{Multiplicative forms and Spencer operators}
\author{Marius Crainic \and Maria Amelia Salazar \and Ivan Struchiner}
\date{}
\maketitle

\begin{abstract}
Motivated by our attempt to recast Cartan's work on Lie pseudogroups in  a more global and modern language, we are brought back to the question of understanding 
the linearization of multiplicative forms on groupoids and the corresponding integrability problem. From this point of view, the novelty of this paper is that we study forms {\it with
coefficients}. However, the main contribution of this paper is conceptual: 
the finding of the relationship between multiplicative forms and Cartan's work, which provides a completely new approach to integrability theorems for multiplicative forms. Back to Cartan, the multiplicative point of view shows that, modulo Lie's functor, the  
Cartan Pfaffian system (itself a multiplicative form with coefficients!) is the same thing as the classical Spencer operator.

\end{abstract}

\tableofcontents

\section{Introduction and main results}

This paper belongs to a longer project -- that of understanding Cartan's work on Lie pseudogroups \cite{Cartan1904, Cartan1905, Cartan1937} in a more global and modern language. 
An important role in our interpretation is played by Pfaffian systems on Lie groupoids. These are the main objects of this paper. 
We prove an integrability result, in the spirit of Lie, which allows us to pass from the (more interesting) global picture to the (easier-to-handle) linear picture. 
As an outcome we find that the associated infinitesimal data are certain ``connection-like operators", which we call `Spencer operators'. 
The main example is the classical Cartan system (represented by the canonical Cartan forms) on the groupoids of jets of diffeomorphisms
\cite{Cartan1904, Kuranishi, GuilleminSternberg:deformation, Olver:MC}. On the infinitesimal side we recover the classical Spencer operator \cite{Spencer, KumperaSpencer, Gold3,  GuilleminSternberg:deformation}. Hence, using 
Lie groupoids, we learn that the classical Cartan forms and the classical Spencer operators are the same thing, modulo the Lie functor.

As usual when dealing with Pfaffian systems on a manifold $\G$, there are two possible points of view (and two schools in the subject), dual to one another:
working with distributions $\H\subset T\G$ or, as Cartan, working with $1$-forms on $\G$; in the second case, while Cartan's considerations are local and involve a family of $1$-forms, the global formulation requires a $1$-form $\theta$ on $\G$ with coefficients in a vector bundle. The relationship between the two approaches is simply $\H= \textrm{Ker}(\theta)$. One advantage of Cartan's point of view is that it is slightly more general (it allows $\theta$ to have non-constant rank). A more important
advantage is that it allows for generalizations to differential forms of other degrees, leading to Cartan's exterior differential systems. On the other hand, the distribution point of view has the advantage that some integrability conditions become more natural and easier to handle globally. Both points of view will be present in this paper.

In order to talk about a ``Pfaffian system on the Lie groupoid $\G$'', there are some conditions that one has to impose on $\theta$ (or on $\H$); the most important one
is ``multiplicativity''- a compatibility condition with the groupoid multiplication. Actually, most of this paper is carried out under this condition alone. Also, although
our original exposition and proofs were using the language of distributions, we realised that the dual point of view allows us to treat (without extra effort) differential forms of
arbitrary degree. In conclusion, a large part of this paper is written for multiplicative forms with coefficients.

Working with forms of arbitrary degree is natural from Cartan's point of view (exterior differential systems). However, the main reason for us to allow general forms is the fact that
multiplicative $2$-forms are central to Poisson and related geometries. Moreover, while multiplicative $2$-forms with trivial
coefficients (!) are well understood, the question of passing from trivial to arbitrary coefficients has been around for a while. 
Surprisingly enough, even the statement of an integrability theorem for multiplicative forms with non-trivial coefficients was completely unclear.
This shows, we believe, that even the case of trivial coefficients was still hiding phenomena that was not understood. The fact that our work related to Lie pseudogroups clarifies this point
came as a nice surprise, and, looking back, we can now say what was missing from the existing picture in multiplicative $2$-forms: Spencer operators.\\

We would like to emphasize that the novelty of this paper lies not only in the main theorems, but also in the underlying approach. In particular, even in the known case of multiplicative $2$-forms with trivial coefficients, 
the proofs are completely new. Moreover, these ideas can be used in the study of other multiplicative structures (e.g. bivector fields, other tensor fields, distributional fields, etc).
The main results of this paper, explained in the Section \ref{sec: main results}, are the following:
\begin{itemize}
\item Theorem \ref{t1}: an integrability result which describes multiplicative forms with coefficients  (Definition \ref{def-mult-form}) in terms of their infinitesimal counterpart, i.e. Spencer operators (Definition \ref{def-Spencer-oprt}).
\item Theorem \ref{t2}: the dual of Theorem \ref{t1} in the case of $1$-forms, i.e. an integrability result for multiplicative distributions (Definition \ref{def-pf-syst}).
\item Theorem \ref{t3}: the infinitesimal characterization of the involutivity of multiplicative distributions.
\end{itemize}
Particular cases give rise to consequences which are interesting on their own (see Section \ref{sec: examples}). In particular, we obtain an integrability result for Cartan connections on groupoids and we describe a direct approach to the contact groupoids associated to Jacobi manifolds (with a slight generalization to the local Lie algebras of Kirillov \cite{Kirillov}). \\

Here are a few connections with the existing literature. On one hand, there is the literature related to Poisson geometry. Symplectic groupoids were discovered as the associated global objects \cite{Weinstein}, while Ping Xu realised the relevance of the multiplicativity condition \cite{Xu}. Multiplicative $0$-forms (1-cocycles) were studied in the context of quantization \cite{WeinsteinXu} (see also our Subsection \ref{1-cocyles and relations to the van Est map}). 
Motivated by Dirac geometry and the theory of Lie group-valued moment maps, the case of closed multiplicative $2$-forms was analyzed in \cite{BCWZ}. The case of multiplicative $1$-forms (not necessarily closed) appeared in \cite{prequantization} in the context of pre-quantization. General multiplicative forms, i.e., those which are not necessarily closed and are of arbitrary degree (but still with trivial coefficients!) were understood in \cite{BursztynCabrera, CrainicArias}. Distributions which are "multiplicative" in a sense more general than ours, but which are required to be involutive, were studied in \cite{Hawkins} in the context of geometric quantization and, more recently, in \cite{JotzOrtiz}. 
Moving towards Cartan's ideas, our Cartan connections from Subsection \ref{Cartan connections on groupoids} are the global counterpart of Blaom's Cartan algebroids \cite{Blaom}. 
The flat Cartan connections are the same ``flat connections on groupoids'' used by Behrend in the context of equivariant cohomology \cite{behrend}.
On the other hand, due to our approach, there is long list of literature on Lie pseudogroups and the geometry of PDE that serves as inspiration for our project (hence also for this paper) \cite{Cartan1904, Cartan1905, Cartan1937, Spencer, KumperaSpencer, GuilleminSternberg:deformation, Gold1, Gold2, Gold3, Olver:MC, Kamran, BC}. Of course, the appearance of the classical Cartan form and Spencer operator is an indication of this relationship with the theory of Lie pseudogroups; our operator (\ref{Theta}) when $k= 1$ corresponds to the start of the nonlinear Spencer complex {\it loc.cit} . Another indication is given in our Subsection \ref{Rough idea and some heuristics behind the proof}. The relationship with the geometry of PDE is more present in the case of our proof of Theorem \ref{t3}, but the detailed relationship will be explained elsewhere \cite{thesis}; however, we mention here that, implicitly, a central role is played by the notion of prolongation (and our cocycle $c$ from our Lemma \ref{cocycle} and its linearization $k$ are basically the same as the curvature $k$ which is central to \cite{Gold1} and appears in \cite{Gold2}, Prop. 8.3 in the non-linear case). Our finding that, modulo the Lie functor, the standard Cartan forms are the same thing as the classical Spencer operators may not be so surprising because both of them serve similar purposes (detect holonomic sections); however, from that point of view, our message is that all that the fundamental properties of these classical objects, on which everything else depends, are multiplicativity and the Spencer operator axioms. The fact that multiplicativity is fundamental for a more conceptual understanding of Cartan's structure equations is far less obvious; we realised that after staring for a few days at the formulas from the proof of Cartan's second fundamental theorem for Lie pseudogroups from \cite{Kamran} (pp. 59); but structure equations will be treated in a separate paper of our project.


\subsection*{Acknowledgments}
We would like to thank Henrique Burstyn for several interesting discussions throughout the development of this project.

\section{Basic definitions, first examples, main results}
\label{sec: main results}

\subsection{Multiplicative forms}
\label{Multiplicative forms}
Let $\G$ be a Lie groupoid over a manifold $M$; we will use the notation $\G \tto M$.
We will denote by $s,t: \G \to M$ the source and target maps of $\G$, by $u:M \to \G$ the unit map $u(x)= 1_x$, by
$i:\G \to \G$ the inversion $i(g) = g^{-1}$ and by $m: \G_2\to \G$ the multiplication $m(g, h)= gh$, defined on the
manifold of composable arrows
\[\G_2 = \set{(g,h)\in \G\times \G : s(g) = t(h)}.\]
We will also use the notation $g: x\rmap y$ to indicate that $s(g)= x$, $t(g)= y$. 
A representation 
of $\G$ is a vector bundle $\mu: E \to M$ and a smooth map 
\[\G \times_{s,\mu} E \longrightarrow E, \ (g, e)\mapsto g\cdot e\] 
defined on the fibered product of $\G$ and $E$ (i.e. each $g: x\rmap y$ defines a linear transformation $E_x\rmap E_y, v\mapsto g\cdot v$), which satisfies the usual 
identities of an action. Equivalently, a representation of $\G$ on a vector bundle $E$ is a groupoid homomorphism $\G\To \textrm{GL}(E)$, where $GL(E)$ is the Lie groupoid 
over $M$ whose arrows between two points $x, y\in M$ are the linear isomorphisms $E_x\stackrel{\sim}{\to} E_y$.

\begin{definition}\label{def-mult-form}\rm
Given a Lie groupoid $\G$ and a representation $E$ of $\G$, an \textbf{$E$-valued multiplicative $k$-form on $\G$} is any form $\theta \in \Omega^k(\G, t^{\ast}E)$ satisfying
\begin{equation}\label{eq: multiplicative}
(m^{\ast}\theta)_{(g,h)} = \pr_1^{\ast}\theta + g\cdot(\pr_2^{\ast}\theta)
\end{equation}
for all $(g,h) \in \G_2$, where $\pr_1,\pr_2: \G_2 \to \G$ denote the canonical projections. 
\end{definition}

\begin{example}[Linear forms; the classical linear Cartan form]\label{ex: linear forms} \rm A vector bundle $F \overset{\pi}{\to} M$ can be seen as a Lie groupoid with multiplication given by fiberwise addition (a bundle of abelian groups). In this case, any vector bundle $E$ over $M$ can be seen as a trivial representation of $F$ ($f\cdot e= e$). In this case, a multiplicative form $\theta\in \Omega^k(F, \pi^{\ast}E)$ is called a \textbf{linear form}. Thus, if $a: F\times_{\pi}F \to F$ denotes the addition of $F$, then $\theta$ is linear if and only if
\[a^{\ast}\theta = \pr_1^{\ast}\theta + \pr_2^{\ast} \theta.\]

An example is the linear Cartan 1-form associated to a vector bundle $E$, 
\[ \theta\in \Omega^1(\Jet^1E, E).\] 
Here $F= \Jet^1E\to M$ is the vector bundle consisting of first jets of sections of $E$. For the definition of $\theta$, consider $j^{1}_{x}s\in \Jet^1E$ (with $x\in M$, $s$ a section of $E$) and $\xi$ a tangent vector to $\Jet^1E$ at $j^{1}_{x}s$. Using the projection $\pr: \Jet^1E\to E$, 
\[ \theta(\xi):=\d\pr(\xi)- \d s(\d\pi(\xi)) \in T_{s(x)}E\]
is in the kernel of $\d\pi$, and hence it defines an element in $E_x$. 
\end{example}

\begin{example}[Classical Cartan form on the jet groupoids] \label{Classical Cartan form on the jet groupoids} \rm The classical Cartan 1-form on the groupoid $\Pi^1(M)$ consisting of first jets of local(ly defined) diffeomorphisms of $M$, with coefficients in $TM$, 
\[ \theta\in \Omega^1(\Pi^1(M), TM),\] 
is defined as follows. $\Pi^1(M)$ has source, target and multiplication given by
\[ s(j^{1}_{x}\phi)= x, \ s(j^{1}_{x}\phi)= \phi(x),\ j^{1}_{\phi(x)}\psi \cdot  j^{1}_{x}\phi= j^{1}_{x}\psi\circ\phi ,\]
and the action of $\Pi^1(M)$ on $TM$ is induced by the differential of diffeomorphisms. To describe $\theta$, consider $j^{1}_{x}(\phi)\in \Pi^1(M)$ (with $x\in M$, $\phi$ a diffeomorphism on $M$ defined around $x$) and $\xi$ a tangent vector to $\Pi^1(M)$ at $j^{1}_{x}\phi$. Then
\[ \theta(\xi):= \d t(\xi)- \d\phi ( \d s(\xi)) \in T_{\phi(x)}M .\]

Similarly, one has Cartan 1-forms on the higher jet-groupoids $\Pi^k(M)$, $\theta\in \Omega^k(\Pi^k(M), J^{k-1}TM)$. Both these Cartan forms, as well as the linear ones from the previous example, are particular instances of multiplicative Cartan forms on jet groupoids associated to a general groupoid (see the next section).
\end{example}

\begin{example}[Cohomologically trivial forms] \rm \label{ex: form on base}
Any form $\omega \in \Omega^k(M,E)$ induces a multiplicative form $\delta(\omega)\in \Omega^k(\G, t^{\ast}E)$:
\[\delta(\omega)_g = g\cdot s^{\ast}\omega - t^{\ast}\omega .\]
Forms of this type will be called cohomologically trivial (for indications of the terminology, see also Subsection \ref{1-cocyles and relations to the van Est map}). 


Note that the classical Cartan form $\theta\in \Omega^1(\Pi^1(M), TM)$ is of this type: it is $\delta(\iota)$, where $\iota\in \Omega^1(M, TM)$ is the identity of $TM$. For higher jets however, $\theta\in \Omega^k(\Pi^k(M), J^{k-1}TM)$ is not cohomologically trivial. 
\end{example}

\begin{remark}[Multiplicativity and bisections] \label{bisections}\rm \ Here is a point which, at least implicitly, is at the heart of our approach to multiplicative forms: they define ``pseudogroups of bisections of $\G$''.
Recall that a bisection of a Lie groupoid $\G$ over $M$ is any splitting $b: M\to \G$ of the source map with the property that $\phi_b:= t\circ b: M\rmap M$ is a diffeomorphism; 
the bisections of $\G$ form a group $\textrm{Bis}(\G)$ with the multiplication and the inverse given by
\[b_1\cdot b_2(x) = b_1(\phi_{b_2}(x))b_2(x),\ b^{-1}(x)=i\circ b\circ\phi_b^{-1}.\]
Local bisections are defined similarly, just that they are defined only over some open $U\subset M$ (and then $\phi_b$ is a diffeomorphism from $U$ to
$\phi_b(U)$); if $b_1$ is defined on $U_1$ and $b_2$ on $U_2$, then $b_1\cdot b_2$ is a local bisection defined on $\phi_{b_2}^{-1}( U_1)\cap U_2$. 

Given $\theta \in \Omega^k(\G, t^{\ast}E)$ multiplicative, one can talk about $\theta$-holonomic bisections of $\G$, i.e. those with the property that
$b^{\ast}\theta= 0$. The multiplicativity of $\theta$ ensures that the set $\textrm{Bis}_{\theta}(\G)$ of $\theta$-holonomic bisections is a subgroup of $\textrm{Bis}(\G)$.
\end{remark}


\subsection{Spencer operators}
\label{Spencer operators}

Passing to the infinitesimal picture, recall that a Lie algebroid over $M$ is a vector bundle $A \to M$ endowed with a vector bundle map (the anchor) $\rho: A \to TM$ and a Lie bracket on $\Gamma(A)$ satisfying the Leibniz identity
\[[\al,f\be] = f[\al,\be] + \Lie_{\rho(\al)}(f)\be\]
for all $\al,\be \in \Gamma(A)$ and $f \in \mathrm{C}^{\infty}(M)$. Here $\Lie_{\rho(\al)}$ is the Lie derivative along the vector field $\rho(\alpha)$. 
A  representation of a Lie algebroid $A$ is a vector bundle $E$ endowed with an $\Rr$-bilinear operator
which satisfies the usual connection-like identities:
\[ \nabla_{f\al}s = f\nabla_{\al}s, \ \ \nabla_{\al}(fs) = f\nabla_{\al}s +\Lie_{\rho(\al)}(f)s\]
(for all $\al \in \Gamma(A)$, $s\in \Gamma(E)$, and $f \in \mathrm{C}^{\infty}(M)$), and the the flatness condition
\[\nabla_{\al}\nabla_{\be} - \nabla_{\be}\nabla_{\al} = \nabla_{[\al,\be]}, \ \ \ \ \ \forall \  \al,\be \in \Gamma(A).\]
Each $\alpha\in \Gamma(A)$ induces a Lie derivative operator $\Lie_{\alpha}$ acting on $\Omega^k(M, E)$, which acts as $\Lie_{\rho(\alpha)}$ on forms and as $\nabla_{\alpha}$ on $\Gamma(E)$:
\[\Lie_{\al}\omega(X_1, \ldots, X_k) = \nabla_{\al}(\omega(X_1, \ldots, X_k)) - \sum_i\omega(X_1, \ldots, [\rho(\al),X_i], \ldots , X_k).\]

\begin{definition}\label{def-Spencer-oprt}\rm
Given a Lie algebroid $A$ over $M$ and a representation $E$ of $A$, an \textbf{$E$-valued $k$-Spencer operator on $A$} is a linear operator
\[ D: \Gamma(A) \to \Omega^k(M, E)\]
together with a vector bundle map 
\[ l:A \to \wedge^{k-1}T^{\ast}M\otimes E,\]
called the symbol of the Spencer operator, satisfying the Leibniz identity 
\begin{equation}\label{eq: Leibniz identity}
D(f\al) = fD(\al) + \d f\wedge l(\al)
\end{equation}
and the compatibility conditions:
\begin{eqnarray} 
 & D([\al,\be] ) = \Lie_{\al}D(\be) - \Lie_{\be}D(\al) \label{eq: compatibility-1}\\
 & l([\al,\be]) = \Lie_{\al}l(\be) - i_{\rho(\be)}D(\al) \label{eq: compatibility-2}\\
 & i_{\rho(\al)}l(\be) = -i_{\rho(\be)}l(\al), \label{eq: compatibility-3} 
\end{eqnarray}
for all $\al,\be \in \Gamma(A)$, and $f \in \C(M)$.
\end{definition}

\begin{remark} \rm \ When we are not in the ``special case'' $k= \textrm{dim}(M)+ 1$, the entire information is contained in $D$ and we only have to require 
(\ref{eq: compatibility-1}) and (\ref{eq: compatibility-3}). Indeed, in this case $l$ will be unique and (\ref{eq: compatibility-2}) follows from (\ref{eq: compatibility-1}) and Leibniz identities (plug in the first equation $f\beta$ instead of $\beta$ and expand using Leibniz). The fact that one has to adopt the previous definition so that it includes the ``special case'' $k= \textrm{dim}(M)+ 1$ is unfortunate because this case is not important for our
main motivating purpose (when $k= 1$). Keeping this in mind, we will simply say that $D$ is a Spencer operator and that $l$ is the symbol of $D$.
\end{remark}

\begin{example}[The classical Spencer operator]\label{1-jets-convenient}  \rm The classical Spencer operator associated to a vector bundle $E$
over $M$ is the unique linear operator
\[ D^{\textrm{clas}}: \Gamma(\Jet^1E) \To \Omega^1(M, E),\]
satisfying the Leibniz identity relative to $\pr: \Jet^1E\to E$, as well as the 
holonomicity condition
\[ D^{\textrm{clas}}(j^1(s))= 0 \ \ \ \forall \ s\in \Gamma(E).\]
Of course, viewing $\Jet^1E$ as a trivial Lie algebroid (zero anchor, zero bracket), with the trivial action on $E$, $D^{\textrm{clas}}$ is an $E$-valued Spencer operator. 

One can define $D^{\textrm{clas}}$ using the linear Cartan form from Example \ref{ex: linear forms}, as
\[ D^{\textrm{clas}}(s)= s^{\ast}\theta .\]
Alternatively, $D$ is the second component of a canonical identification 
\begin{equation}\label{J-decomposition} 
\Gamma(\Jet^1E)\cong \Gamma(E)\oplus \Omega^1(M, E),
\end{equation}
which we will call the Spencer decomposition, and which provides a convenient way of representing the sections of first jet bundles.
This decomposition comes from the short exact sequence of vector bundles 
\[ 0\to \textrm{Hom}(TM, E)\stackrel{i}{\to} J^1(E) \stackrel{\pr}{\to} E\to 0\]
where $\pr$ is the canonical projection $j^{1}_{x}s\mapsto s(x)$ and $i$ is determined by
\[ i(\d  f\otimes \alpha)= f\jet^1(\alpha)- \jet^1(f\alpha).\]
Although this sequence does not have a canonical splitting, at the level of sections it does: $\alpha\mapsto \jet^1(\alpha)$.
This gives the identification (\ref{J-decomposition}). In other words, any $\xi\in \Gamma(\Jet^1E)$ can be written uniquely as
\[  \xi= j^1(\alpha)+ i(\omega)\]
with $\al\in \Gamma(A)$, $\omega\in \Omega^1(M, A)$; we write $\xi= (\alpha, \omega)$. One should keep in mind however that the resulting $C^{\infty}(M)$-module structure becomes
\[ f \cdot (\alpha, \omega)= (f\alpha, f\omega+ \d f\wedge \alpha).\]
In terms of the Spencer operator, $\xi= (\pr(\xi), D^{\textrm{clas}}(\xi))$ 
and the module structure gives the Leibniz identity for $D^{\textrm{clas}}$.

Note that, again, there is a version of $D^{\textrm{clas}}(\xi)$ on higher jets:
\[ D^{\textrm{clas}}: \Gamma(J^kE) \To \Omega^1(M, J^{k-1}E).\]
\end{example}

\begin{remark}\label{rk-convenient}  \rm \ 
Note that in the Pfaffian case ($k= 1$), a general $E$-valued Spencer operator as in the previous definition can be encoded/interpreted in a vector bundle map
\[ j_{D}: A\To \Jet^1E .\]
Indeed, the Leibniz condition for $D$ relative to $l$ is equivalent to the fact  
$(l, D): \Gamma(A)\To \Gamma(E)\oplus \Omega^1(M, E)$
is $C^{\infty}(M)$-linear with respect to the module structure on the right hand side mentioned above. Hence, using the identification (\ref{J-decomposition}), we see that we deal with a morphism of vector bundles $j_{D}$ as above.

Note that $D$ itself can be recovered as the composition $D^{\textrm{clas}}\circ j_{D}$. For the classical Spencer operator, it corresponds to $j_{D^{\textrm{clas}}}= \textrm{Id}$. 
\end{remark}

\begin{example}\label{ex: form on base-2}\rm \ Here is the infinitesimal analogue of the cohomologically trivial forms of Example \ref{ex: form on base}: for any algebroid $A$ and any representation $E$ of $A$, any form $\omega\in \Omega^k(M, E)$ induces an $E$-valued Spencer operator by 
 \[D(\al) = \Lie_{\al}\omega, \quad l(\al) = i_{\rho(\al)}\omega .\]
\end{example}

\subsection{The Lie functor: integrability (Theorem \ref{t1})}
\label{The Lie functor}

In this paper the term ``Lie functor'' is used to indicate the passing from the global picture (groupoids) to the infinitesimal picture (algebroids) and should be thought of as ``linearization''. The reverse process is coined as ``integration''. 

The first ``example'' is the construction of the Lie algebroid $A= A(\G)$ of a Lie groupoid $\G$ (over a base manifold $M$). Recall that, 
as a vector bundle over $M$, $A= u^{\ast}T^{s}\G$ is the pullback by the unit map of the vector bundle of vectors tangent to the $s$-fibers.
Using right translations, the space of sections $\Gamma(A)$ is identified with the space of right-invariant vector fields on $\G$, and so the Lie bracket of vector fields induces a Lie bracket $[\cdot, \cdot]$ on sections of $\Gamma(A)$
(see also the remark below). Finally, $\rho = \d t \vert_{A}$. 

For the reverse process, starting with a Lie algebroid $A$, one looks for a Lie groupoid $\G$ which integrates $A$, i.e. whose Lie algebroid is isomorphic to $A$; if such a $\G$ exists, one says that $A$ is integrable. 
A basic result  in the theory of Lie groupoids states that, for an integrable Lie algebroid $A$, one finds an unique (up to isomorphisms) Lie groupoid $\G$ which integrates $A$ and which is $s$-simply connected (in the sense that all the 
fibers of $s: \G\to M$ are connected and simply connected). 

Given a Lie groupoid $\G$ with Lie algebroid $A$, intuitively, the Lie functor takes structures on $\G$ and transforms them into structures on $A$. 
It is good to keep in mind that, for the reverse process (integrability), the $s$-simply connectedness of $\G$, mentioned above, constantly appears as a necessary condition.

For instance, any representation $E$ of $\G$ becomes a representation of $A$ as follows: for $\al \in \Gamma(A)$, and $e \in \Gamma(E)$,
\begin{eqnarray}
\label{rep}\nabla_{\al}e(x) = \frac{\d}{\d \eps}\bigg\vert_{\eps = 0}g(\eps)^{-1}\cdot e\big(t(g(\eps))\big),
\end{eqnarray}
where $g(\eps)$ is any curve in $s^{-1}(x)$ with $g(0)=1_x$, $\frac{\d}{\d \eps}\vert_{\eps = 0}g(\eps) = \al(x)$. Under the $s$-simply connectedness assumption on $\G$, 
one finds that this construction defines a 1-1 correspondence between representations of $\G$ and representations of $A$. Our first main result is a similar correspondence between multiplicative forms and 
Spencer operators.

\begin{remark}[Right invariance and flows]\label{right-invariance} \rm \ For explicit formulas, it is useful to be more explicit about the identification of $\Gamma(A)$ with right invariant vector fields and about their induced flows. The right translations by an
element $g: x\to y$ of $\G$ are 
\[ R_g: s^{-1}(y)\To s^{-1}(x), \ R_g(a)= ag .\]
At the level of tangent vectors, one has to restrict to the bundle $T^s\G= \textrm{Ker}(ds)$ of vectors tangent to the $s$-fibers; we denote by the same letter $R_g$ the induced linear maps, obtained by differentiation (going from 
$T^{s}_{a}\G$ to $T^{s}_{ag}\G$ for $a\in s^{-1}(y)$). With this, the space of right invariant vector fields on $\G$ is
\[ \X^{\textrm{inv}}(\G)= \{X\in \Gamma(T^s\G): R_g(X_a)= X_{ag} \ \forall\ a, g\in \G\ \textrm{composable} \}.\]
The identification $\Gamma(A)\stackrel{\sim}{\To} \X^{\textrm{inv}}(\G)$ sends $\alpha\in \Gamma(A)$ to $\alpha^r\in \X^{\textrm{inv}}(\G)$ given by
\[ \alpha^{r}_{g}= R_g(\alpha_{t(g)}).\]

For $\alpha\in \Gamma(A)$, on defines the (local) flow of $\alpha$ by
\[ \phi_{\alpha}^{\epsilon}:= \varphi_{\alpha^r}^{\epsilon}|_{M}: M\To \G ,\]
where $\varphi_{\alpha^r}^{\epsilon}$ is the (local) flow of the right invariant vector field $\alpha^r$. As usual, we are sloppy with the precise notations for the domain of the flow.
From right invariance it follows that $\phi_{\alpha}^{\epsilon}$ is a bisection of $\G$ (see Remark \ref{bisections}) which determines
the entire flow $\varphi_{\alpha^r}^{\epsilon}$ ($\varphi_{\alpha^r}^{\epsilon}(g)= \phi_{\alpha}^{\epsilon}(t(g))g$). Note also that, in terms of multiplication of (local) bisections
(Remark \ref{bisections} again), the flow property for  $\varphi_{\alpha^r}^{\epsilon}$ translates into
\[ \phi_{\alpha}^{\epsilon}\cdot \phi_{\alpha}^{\epsilon'}= \phi_{\alpha}^{\epsilon+ \epsilon'}.\]
This shows that, morally, $\Gamma(A)$ plays the role of the Lie algebra of $\textrm{Bis}(\G)$. 
\end{remark}

\begin{theoremz}\label{t1}
Let $E$ be a representation of a Lie groupoid $\G$ and let $A$ be the Lie algebroid of $\G$. Then any multiplicative form $\theta \in \Omega^k(\G, t^{\ast}E)$ induces an
$E$-valued Spencer operator $D_{\theta}$ of order $k$ on $A$, given by 
\begin{equation}\label{eq: explicit formula}\small{
\left\{\begin{aligned}
D_{\theta}(\al)_x(X_1, \ldots, X_k) &= \frac{\d}{\d \eps}\Big|_{\eps = 0} \phi^{\eps}_{\al}(x)^{-1}\cdot\theta((\d \phi^{\eps}_{\al})_x(X_1), \ldots, (\d \phi^{\eps}_{\al})_x(X_k)), \\ \\l_{\theta}(\al) &= u^{\ast}(i_{\al}\theta) .\end{aligned}\right.}
\end{equation}

If $\G$ is $s$-simply connected, then this construction defines a 1-1 correspondence between  $E$-valued $k$-forms on $\G$ and 
$E$-valued $k$-Spencer operators on $A$.  
\end{theoremz}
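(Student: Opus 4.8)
The plan is to establish the two assertions separately: first that formula (\ref{eq: explicit formula}) always produces a well-defined Spencer operator from a multiplicative form, and then that under $s$-simple connectedness this assignment is a bijection. For the first part, I would begin by verifying that $D_\theta(\al)$ is genuinely a differential form, i.e.\ $C^\infty(M)$-multilinear in the arguments $X_1,\dots,X_k$; this is immediate from the tensoriality of $\theta$ in its form arguments. The content lies in checking the four axioms of Definition \ref{def-Spencer-oprt}. The key technical device is the interplay between right-invariant vector fields and their flows from Remark \ref{right-invariance}, together with multiplicativity (\ref{eq: multiplicative}). Concretely, I would interpret the defining formula as a Lie-derivative-type expression: since $\phi^\eps_\al$ is the bisection flow of $\al^r$ and bisections act on forms via pullback, one can write $D_\theta(\al)$ as an infinitesimal version of $(\phi^\eps_\al)^\ast\theta$ transported back by the action. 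The multiplicativity condition is precisely what guarantees that the pullback of $\theta$ by a bisection is again controlled by $\theta$ (this underlies Remark \ref{bisections}), so that differentiating at $\eps=0$ lands in the right space and produces the Leibniz identity (\ref{eq: Leibniz identity}) with symbol $l_\theta(\al)=u^\ast(i_\al\theta)$.

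For the compatibility conditions, the strategy is to differentiate relations between flows. The bracket $[\al,\be]$ corresponds to the commutator of the right-invariant fields $\al^r,\be^r$, whose flow is the usual commutator of flows; differentiating the multiplicative identity along nested flows and using the naturality of the Lie derivative $\Lie_\al$ on $\Omega^\bullet(M,E)$ should yield (\ref{eq: compatibility-1}). Equation (\ref{eq: compatibility-2}) relating $l([\al,\be])$ to $\Lie_\al l(\be)$ and $i_{\rho(\be)}D(\al)$ comes from contracting one flow direction with $\rho(\be)=\d t(\be)$ and keeping track of how the $t$-fiber directions interact under $m$; and the antisymmetry (\ref{eq: compatibility-3}) reflects the skew-symmetry of $\theta$ in its arguments evaluated along two anchor directions at a unit. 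Throughout, the unit map $u$ and the identity $\rho=\d t|_A$ are the bridge between the groupoid-level data and the base-level operators.

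For the bijectivity under $s$-simple connectedness, the natural approach is to construct an inverse by integration and invoke uniqueness. Given a Spencer operator $(D,l)$, one wants to reconstruct $\theta$ from its ``derivative along flows''. The clean way is to exploit that $\G$ being $s$-simply connected means every arrow $g:x\to y$ is reached by a path of the form $\phi^{\eps}$, and more structurally that multiplicative forms, being $1$-cocycles in an appropriate complex, are determined by their infinitesimal data exactly as representations are (the stated analogue (\ref{rep})). I would therefore set up a van Est–type or holonomy argument: define $\theta$ on right-invariant (hence all) tangent directions by integrating the Spencer data along $s$-fibers, show the result is independent of the chosen path using simple connectedness, and verify multiplicativity by a concatenation-of-paths computation. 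Injectivity follows because $D_\theta=0$ forces $\theta$ to vanish on a generating set of tangent directions along each $s$-fiber, and $s$-connectedness propagates this to all of $\G$.

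\textbf{Main obstacle.}
I expect the hardest step to be the integration (surjectivity) direction, specifically proving that the $\theta$ reconstructed from $(D,l)$ is well-defined (path-independent) and multiplicative, rather than merely that it differentiates back to $(D,l)$. The Spencer axioms (\ref{eq: compatibility-1})--(\ref{eq: compatibility-3}) are exactly the infinitesimal integrability conditions that must conspire to make the holonomy/integral independent of path, and turning them into a global cocycle identity is where $s$-simple connectedness is genuinely used. On the differentiation (necessity) side, the subtle point will be the bookkeeping in (\ref{eq: compatibility-2}): disentangling the symbol contribution from the operator contribution when one of the two flow directions is contracted requires care, since this is precisely the ``special case'' $k=\dim M+1$ phenomenon where $l$ carries independent information.
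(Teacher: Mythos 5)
Your differentiation half is plausible and could be carried out directly (the paper itself obtains the compatibility conditions by a different route, but a flow computation in the spirit of the lemma on $[i_\xi,\Lie_\al]$ in Subsection 3.2 would work). The genuine gap is in the integration direction, and it starts with the phrase ``define $\theta$ on right-invariant (hence all) tangent directions by integrating the Spencer data along $s$-fibers.'' Right-invariant vector fields span only $T^s\G=\ker \d s$, not all of $T\G$; on $\ker\d s$ the form $\theta$ is already determined pointwise by the symbol $l$ alone (via right translation, cf.\ Lemma \ref{lemma: theta vertical}), so a holonomy argument along the $s$-fibers of $\G$ can only recover the ``vertical'' part of $\theta$. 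The essential content of $\theta$ is its value on directions transverse to the $s$-fibers, and these are parametrized not by points of $\G$ but by splittings $\sigma_g:T_{s(g)}M\to T_g\G$, i.e.\ by elements of $\Jet^1\G$. Your proposal never explains how $(D,l)$ pins down $\theta$ on such transverse vectors, nor why the answer is independent of the choice of splitting; this is precisely where the real work lies.

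The paper's resolution is to trade $\theta$ for a genuine $1$-cocycle $c$ on the jet groupoid $\Jet^1\G$ with values in $\hom(\wedge^kTM,E)$, together with $l$, subject to a compatibility equation (\ref{eq: condition 1}) that controls how $c(\sigma_g)$ varies as the splitting $\sigma_g$ over a fixed $g$ varies (the variation is expressed through $l$ applied to $\sigma_g\ominus\sigma_g'$); the reconstruction of $\theta$ from $(c,l)$ is then an explicit shuffle formula, and checking it is well defined and multiplicative is Proposition \ref{prop: multiplicative forms as cocycles}. Only after this reduction does a van Est/Lie II argument enter --- and not on $\G$ but on $\Jet^1\G$, whose $s$-fibers need not be connected or simply connected even when those of $\G$ are; this forces the intermediate passage to the cover $\widetilde{\Jet^1\G}$ (Proposition \ref{corol: passage to cover}), where again equation (\ref{eq: condition 1}) is what allows descent. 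The final step identifies algebroid $1$-cocycles on $\Jet^1A$ satisfying (\ref{eq: condition 1''})--(\ref{eq: condition 3''}) with Spencer operators via the decomposition (\ref{J-decomposition}). So while your instinct to invoke a van Est--type integration is right, path-independence on the $s$-fibers of $\G$ is not the obstacle you should be worried about: the missing idea is the reformulation on $\Jet^1\G$ and the treatment of its disconnected, non-simply-connected $s$-fibers.
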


 \begin{remark}\rm 
Let us look again at the case when $A= F$ is a Lie algebroid with trivial bracket and anchor; then Theorem \ref{t1} gives a one-to-one correspondence between linear forms $\theta \in \Omega^k(F,\pi^{\ast}E)$ and operators $D: \Gamma(F) \to \Omega^k(M, E)$, with a symbol map $l:F \to \wedge^{k-1}T^{\ast}M\otimes E$, satisfying Leibniz equation (all compatibility conditions are automatically satisfied). 

This actually indicates a possible strategy, in the spirit of \cite{BursztynCabrera}, but which we will not follow here, for proving Theorem \ref{t1}: given $\theta\in\Omega^k(\G, t^{\ast}E)$, first ``linearize'' $\theta$ to a linear form $\theta_{0}\in \Omega^k(A, t^{\ast}E)$ then consider the associated Spencer operator $D$, carefully book-keeping all the equations involved. 
\end{remark}

\begin{example}[Classical Cartan forms/Spencer operators] \rm It is not difficult to see (and will be explained in full generality in the next section) that the
Spencer operator associated to the linear Cartan form $\theta\in \Omega^1(\Jet^1E, E)$ (Example \ref{ex: linear forms}) is precisely the classical Spencer operator
of the vector bundle $E$; similarly, the one associated to the Cartan form $\theta\in \Omega^1(\Pi^1(M), TM)$ (Example \ref{Classical Cartan form on the jet groupoids}) is the same classical Spencer operator but interpreted in the algebroid context.

Hence our main theorem (Theorem \ref{t1}) gives the precise relationship between the classical Cartan forms and Spencer operators: modulo the Lie functor, they are one and the same thing.
\end{example}

\begin{example}\rm
Here is another ``baby example''. Recall that an Ehresmann connection on a vector bundle $F$ is a splitting $\sigma: \pi^{\ast}TM \to TF$ of the exact sequence of vector bundles over $F$,
\[\xymatrix{0\ar[r]&T^\pi F \ar[r] &TF \ar[r]^-{\d\pi} &\pi^{\ast}TM\ar[r]&0 }\]
where $T^\pi F = \ker \d\pi$ denotes the bundle of vectors tangent to the fibers of $F$. 

Since $T^\pi F$ is canonically isomorphic to $\pi^{\ast}F$, it follows that an Ehresmann connection $\sigma$ is the same as a 1-form $\theta_{\sigma} \in \Omega^1(F, \pi^{\ast}F)$. The form $\theta_{\sigma}$ is linear if and only if $\sigma$ is a linear connection. Thus, in this case, our theorem reduces to the well-know correspondence between linear Ehresmann connections on a vector bundle $F \to M$ and covariant derivative operators 
\[D = \nabla: \X(M) \times \Gamma(F) \to \Gamma(F).\]
In this case the bundle map $l: F \to F$ is just the identity.
 \end{example}

\subsection{The Pfaffian case; dual version (Theorem \ref{t2})}
\label{The Pfaffian case}
We concentrate now on the Pfaffian case ($k= 1$). The usual duality between $1$-forms and distributions admits a multiplicative version (see Subsection \ref{sec: distributions}), giving rise to a dual version of Theorem \ref{t1} (case $k= 1$). Here is the outcome.


To discuss multiplicativity of distributions recall that one has a Lie groupoid $T\G\rightrightarrows TM$ associated to any Lie groupoid $\G\rightrightarrows M$; its structure maps are just the differentials of the structure
maps of $\G$. 

\begin{definition}\label{def-pf-syst} 
A multiplicative distribution on $\G$ is any distribution $\H\subset T\G$ which is also a Lie subgroupoid of $T\G\rightrightarrows TM$ (with the same base $TM$).
\end{definition}

Note that the fact that $\H$ is a Lie groupoid over $TM$ implies that $\H$ is transversal to the $s$-fibers of $\G$ and that the $s$-vertical part of $\H$,
\[ \H^s:= \H\cap T^s\G \] 
has constant rank. Restricting to $M$, one obtains a sub-bundle of $A= \textrm{Lie}(\G)$, 
\[ \B:= \H^{s}|_{M}\subset (T^s\G)|_{M}= A,\]
which is an important piece of the infinitesimal data associated to $\H$. Borrowing the terminology from the theory of EDS \cite{Gold1, Gold2, BC}, we will call it {\it the symbol space} of $\H$. We will also consider the quotient
\[ E= A/\B.\]
On the infinitesimal side, remark that condition (\ref{eq: compatibility-2}) in the definition of Spencer operators for $k= 1$ implies that the operator $\nabla$ which makes $E$ into a representation of $A$ can be recovered from $D$, hence one has a slight reformulation of Definition \ref{def-Spencer-oprt} in this case. Here we also pass to the notation
\[ D(\alpha)(X)= D_{X}(\alpha).\]

\begin{definition}\label{relative-Spencer}
Let $A$ be a Lie algebroid over $M$, let $\B$ a sub-bundle of $A$ and consider $E:= A/\B$ with the quotient map denoted 
$l: A\to E$. A Spencer operator on $A$ relative to $\B$ (or relative to $l$), is any $\Rr$-bilinear map 
\[ D:\X(M)\times\Gamma(A)\to\Gamma(E),\ \ (X, \alpha)\mapsto D_X(\alpha)\]
 which is $C^\infty(M)$-linear in $X$, satisfies the Leibniz identity relative to $l$: 
\begin{eqnarray*}
D_X(f\alpha)=fD_X\alpha+\Lie_X(f)l(\alpha) 
\end{eqnarray*}
and the following two compatibility conditions
\begin{eqnarray}\label{vertical}
D_{\rho(\beta)}\alpha=-l[\alpha, \beta],
\end{eqnarray} 
\begin{eqnarray}
\label{vertical2}
\qquad D_X[\alpha,\alpha']=\nabla_\alpha(D_X\alpha')-D_{[\rho(\alpha),X]}\alpha'-\nabla_{\alpha'}(D_X\alpha)+D_{[\rho(\alpha'),X]}\alpha,
\end{eqnarray}
for all $\alpha, \al'\in \Gamma(A)$, $\be\in \Gamma(\B)$, $X\in\mathfrak{X}(M)$, $f\in C^{\infty}(M)$ and where
\begin{equation}\label{nabla-out-of-D}
\nabla:\Gamma(A)\times \Gamma(E)\to \Gamma(E), \ \nabla_\alpha(l(\alpha'))=D_{\rho(\alpha')}\alpha+l[\alpha,\alpha'].
\end{equation}
\end{definition}

\begin{remark} \label{remark-dual}\rm Condition (\ref{vertical})  implies that $\nabla$ is well-defined; also, (\ref{vertical2}) applied to $X\in \textrm{Im}(\rho)$ implies 
the flatness condition for $\nabla$, so that $E$ becomes a representation of $A$. We see that the previous definition is just a reformulation of the notion of $1$-Spencer operator in the case when the symbol map is surjective .
Indeed (\ref{eq: compatibility-2}) becomes our (\ref{nabla-out-of-D}) and also implies (\ref{vertical}) from the previous definition;
also, (\ref{vertical2}) is just (\ref{eq: compatibility-1}) for $k= 1$. 
\end{remark}

\begin{theoremz}\label{t2} Let $\G\rightrightarrows M$ be an $s$-simply connected Lie groupoid with Lie algebroid $A\to M$. There is a one to one correspondence between 
\begin{enumerate}
\item multiplicative distributions $\H\subset T\G$, 
\item sub-bundles $\B\subset A$ together with a Spencer operator on $A$ relative to $\B$.
\end{enumerate}

In this correspondence, $\B$ is the symbol space of $\H$ and 
\begin{eqnarray}\label{D}
D_X\alpha(x)=[\tilde X,\alpha^r]_x\mod \H^s_{1_x} ,
\end{eqnarray}
where $\tilde X\in \Gamma(\H)\subset\mathfrak{X}(\G)$ is any vector field which is $s$-projectable to $X$ and extends $u_*(X)$ (for $\alpha^r$, see Remark \ref{right-invariance}). 
\end{theoremz}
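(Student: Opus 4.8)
The plan is to deduce Theorem \ref{t2} from Theorem \ref{t1} (in the case $k=1$) using the duality between $1$-forms with coefficients and distributions. The two statements should be two faces of the same correspondence, so the real content is (a) making precise the dictionary between multiplicative distributions $\H\subset T\G$ and $E$-valued multiplicative $1$-forms, where $E= A/\B$ is now \emph{determined} by $\H$ rather than fixed in advance, and (b) matching the infinitesimal data, i.e. checking that the Spencer operator $D_\theta$ produced by Theorem \ref{t1} agrees with the operator (\ref{D}) defined directly from the bracket of vector fields on $\G$.

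First I would set up the duality at the global level. Given a multiplicative distribution $\H$, the symbol space is $\B= \H^s|_M$ and one sets $E= A/\B$; the key point is that the transversality of $\H$ to the $s$-fibers (forced by the groupoid condition, as noted after Definition \ref{def-pf-syst}) lets one regard the composite $T\G\to t^*A\to t^*E$ as an $E$-valued $1$-form $\theta$ whose kernel is exactly $\H$. Here $t^*A\cong T^t\G$ via right translation, and the projection $T\G\to T^t\G$ uses the $\H$-transversality. One then checks that $\theta$ is multiplicative in the sense of Definition \ref{def-mult-form}, which is precisely the translation of ``$\H$ is a subgroupoid of $T\G\rightrightarrows TM$'': the subgroupoid condition says $\H$ is closed under the multiplication $\d m$, and this closure is exactly equation (\ref{eq: multiplicative}) read through the identification $\H=\Ker(\theta)$. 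Conversely, from a sub-bundle $\B\subset A$ and a relative Spencer operator, Theorem \ref{t1} integrates the data to a multiplicative $E$-valued $1$-form $\theta$ (the symbol $l_\theta$ being the quotient map $A\to E$, which is surjective, so we are exactly in the situation of Remark \ref{remark-dual}), and then $\H:=\Ker(\theta)$ is a multiplicative distribution with the prescribed symbol space. The $s$-simply connectedness enters only to guarantee that these two passages are mutually inverse, via the bijectivity half of Theorem \ref{t1}.

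The computational heart is verifying formula (\ref{D}). Here I would take the explicit formula (\ref{eq: explicit formula}) for $D_\theta$ in degree $k=1$ and unwind it. With $\theta$ the form dual to $\H$, the expression $\frac{\d}{\d\eps}\big|_{\eps=0}\phi^\eps_\alpha(x)^{-1}\cdot\theta((\d\phi^\eps_\alpha)_x X)$ is a derivative along the flow of the right-invariant vector field $\alpha^r$, evaluated against $\theta$; recognizing $\theta$ as the projection along $\H$ modulo $\H^s$, this derivative becomes a Lie derivative of $\alpha^r$ along an $s$-projectable lift $\tilde X\in\Gamma(\H)$ of $X$, read modulo $\H^s_{1_x}$. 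The standard identity relating the flow-derivative of a vector field to the Lie bracket then turns this into $[\tilde X,\alpha^r]_x \mod \H^s_{1_x}$, which is exactly (\ref{D}); the extension condition ``$\tilde X$ extends $u_*(X)$'' is what pins down the value at the unit $1_x$ and makes the bracket well-defined modulo $\H^s$. I would also note that independence of the choice of $\tilde X$ follows because two lifts differ by a section of $\H^s$, and bracketing $\alpha^r$ (which is $s$-vertical along its own flow on the relevant fibers) against such a difference lands in $\H^s$.

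I expect the main obstacle to be the second part of step one: proving, cleanly and without coordinates, that multiplicativity of $\theta$ (\ref{eq: multiplicative}) is genuinely equivalent to $\H=\Ker(\theta)$ being a Lie subgroupoid of $T\G\rightrightarrows TM$, including that the symbol identification $E=A/\B$ is compatible on both sides. The subtlety is that $\theta$ takes values in $t^*E$ and its multiplicativity formula involves the action $g\cdot(-)$, whereas the subgroupoid condition on $\H$ is a purely geometric closure under $\d m$; reconciling these requires care with how the right-translation trivialization $T^t\G\cong t^*A$ interacts with $\d m$ and with the induced representation $\nabla$ on $E$. Once this equivalence and the symbol compatibility are in place, the bijection follows formally from Theorem \ref{t1}, and the remaining verification of (\ref{D}) is the bracket computation sketched above.
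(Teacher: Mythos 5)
Your proposal follows essentially the same route as the paper: Section 3.2 of the paper deduces Theorem \ref{t2} from Theorem \ref{t1} via exactly this duality, constructing $E=A/\B$ and $\theta_{\H}$ through the right-translation identification $T\G/\H\simeq T^s\G/\H^s\cong t^{\ast}E$ (note it is $T^s\G$, not $T^t\G$, that right translation identifies with $t^{\ast}A$), and then verifying formula (\ref{D}) by the same flow-derivative-equals-bracket computation, packaged as the commutator identity $[i_\xi,\Lie_{\al}](\omega)_g=g^{-1}\cdot\omega_g([\xi,\al^r])$. The one step you flag as the main obstacle --- that $\theta_{\H}$ is genuinely multiplicative for the induced action on $E$ --- is settled in the paper's Lemma \ref{lemma-from-H-to-theta} by lifting tangent vectors to $\Jet^1\G$ and invoking the multiplicativity of the canonical Cartan form $\theta_{\mathrm{can}}$, which is the same mechanism that defines the adjoint action $\Ad^{\H}$ on $E$.
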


\subsection{Involutivity (Theorem \ref{t3})}
\label{Involutivity}

Let now $\H$ be a multiplicative distribution on a Lie groupoid $\G$ and consider the
associated symbol space $\B= \H^{s}|_{M}$, the representation $E= A/\B$, and the associated Spencer operator
\[ D: \X(M)\times \Gamma(A)\to \Gamma(E).\] 
From the Leibniz identity for $D$ we obtain that $D_{X}(\beta)$ is $C^{\infty}(M)$-linear
on both arguments for $\beta\in \Gamma(\B)$. Hence we obtain a vector bundle map
\[ j_{\B}: \B\to \textrm{Hom}(TM, E),\]
called the symbol representation. Note that, in terms of the jet-representation of Spencer operators 
(Example \ref{1-jets-convenient}), this is just the restriction of $j_{D}$ to $\B$. 
Remark that, if $j_{\B}= 0$, then $D$ induces a connection
\[\nabla^{E}: \X(M)\times \Gamma(E)\to \Gamma(E) ,\ \nabla^{E}_{X}[\al]= D_X(\al).\]

\begin{theoremz}\label{t3}
A multiplicative distribution $\H \subset T\G$ is involutive if and only if the symbol representation $j_{\B}$ vanishes
and the connection $\nabla^{E}$ on $E$ is flat.
\end{theoremz}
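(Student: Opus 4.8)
The plan is to translate the geometric condition of involutivity of $\H$ into the two algebraic conditions on $D$ via the correspondence of Theorem \ref{t2}, working throughout with right-invariant vector fields and the formula (\ref{D}) for $D$. Recall that $\H$ is involutive iff $\Gamma(\H)$ is closed under the Lie bracket of vector fields on $\G$. The key structural observation is that, since $\H$ is multiplicative (a subgroupoid of $T\G \tto TM$), it inherits right-invariance properties analogous to those of $A$ inside $T^s\G$; in particular $\H^s = \H \cap T^s\G$ is spanned by right-invariant vector fields $\beta^r$ with $\beta \in \Gamma(\B)$, and a general section of $\H$ can be built from such $\beta^r$ together with lifts $\tilde X \in \Gamma(\H)$ projecting to vector fields $X$ on $M$ as in Theorem \ref{t2}. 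So $\Gamma(\H)$ is generated (as a $C^\infty(\G)$-module, at least pointwise over $M$ after restriction) by the two families $\{\beta^r : \beta \in \Gamma(\B)\}$ and $\{\tilde X : X \in \X(M)\}$.

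First I would test involutivity on the three types of brackets among these generators. The bracket $[\beta^r, \beta'^r]$ of two right-invariant vector fields is again right-invariant and equals $[\beta,\beta']^r$, which lies in $\Gamma(\H)$ precisely when $[\beta,\beta']$ is a section of $\B$ — but by the symbol representation this is governed by $j_\B$: indeed $D_{\rho(\beta)}\alpha = -l[\alpha,\beta]$ from (\ref{vertical}), and restricting to $\beta,\beta' \in \Gamma(\B)$ shows that the failure of $[\beta,\beta']$ to be vertical is measured exactly by $j_\B$. Thus the first family of brackets stays in $\H$ iff $j_\B = 0$. Second, the mixed brackets $[\tilde X, \beta^r]$: modulo $\H^s$ these compute $D_X(\beta)$ by formula (\ref{D}), and requiring them to remain in $\H$ again forces $j_\B$ to vanish (since $D_X(\beta) = j_\B(\beta)(X)$), and conversely once $j_\B = 0$ the mixed brackets automatically land back in $\H^s \subset \H$. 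Third, and most delicately, the brackets $[\tilde X, \tilde Y]$ of horizontal lifts: this is where the curvature of $\nabla^E$ enters.

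For this third computation I would project the bracket $[\tilde X, \tilde Y]$ back to $M$ via $u$ and compute its class modulo $\H^s_{1_x}$, i.e. in $E$. Using the Leibniz rule and the compatibility (\ref{vertical2}) to manipulate $D_X D_Y - D_Y D_X - D_{[X,Y]}$, the outcome should be the curvature $R^{\nabla^E}(X,Y) = \nabla^E_X \nabla^E_Y - \nabla^E_Y \nabla^E_X - \nabla^E_{[X,Y]}$ acting on the appropriate section, once $j_\B = 0$ ensures that $\nabla^E$ is well-defined. Concretely, under $j_\B = 0$ the operator $D$ descends to the honest connection $\nabla^E$, and the $E$-component of $[\tilde X, \tilde Y]$ at units equals $R^{\nabla^E}(X,Y)$; hence $[\tilde X,\tilde Y] \in \Gamma(\H)$ for all $X, Y$ iff $\nabla^E$ is flat. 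Combining the three cases gives both directions of the theorem.

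The main obstacle I anticipate is the third step: carefully identifying the $E$-valued obstruction in $[\tilde X,\tilde Y]$ with the curvature of $\nabla^E$, rather than with some a priori $\G$-dependent quantity. The subtlety is that $\tilde X$ is only a lift of $X$ (non-canonical, defined on all of $\G$ and extending $u_*(X)$), so the bracket $[\tilde X, \tilde Y]$ is not right-invariant and one must control how its class modulo $\H^s$ depends on the choice of lift. I expect one must show that this class, evaluated along units, is independent of the choices and is computed purely infinitesimally through (\ref{vertical2}) and (\ref{nabla-out-of-D}) — essentially a bookkeeping argument verifying that the horizontal lift of $\nabla^E$ on $\G$ has curvature matching $R^{\nabla^E}$ on $M$. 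Once that identification is pinned down, the equivalence with flatness is immediate, and the whole proof reduces to organizing these three bracket computations cleanly.
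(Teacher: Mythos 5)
Your decomposition of $\Gamma(\H)$ into vertical right-invariant fields $\beta^r$ and horizontal lifts $\tilde X$, and the identification of the three bracket types with $j_{\B}$ and the curvature of $\nabla^{E}$, is the right infinitesimal picture and matches what the paper ultimately extracts. But there is a genuine gap: every quantity you compute ($D_X\beta$ via (\ref{D}), the class of $[\tilde X,\tilde Y]$ ``projected back via $u$'') lives \emph{at the units}, whereas involutivity is a condition at every $g\in\G$. The obstruction $I^{\H}(U,V)=\theta([U,V])$ is tensorial, so it suffices to evaluate it on your generators, but at a general point $g$ nothing in your argument relates $I^{\H}_g$ to the data $(D,l)$ on $M$. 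The paper's proof is organized precisely around this passage: it first proves that $I^{\H}$ is itself multiplicative (Lemma \ref{lemma: delta theta is multiplicative}), which reduces the vertical and mixed brackets at $g$ to units by a pointwise groupoid computation; and for the horizontal--horizontal brackets it packages the obstruction as a differentiable $1$-cocycle $c$ on the jet groupoid $\Jet^1_{\H}\G$ (Lemma \ref{cocycle}), computes its linearization $\kappa$ on $\Jet^1_D A$, identifies $\kappa$ with the curvature of $\nabla^{E}$ once $j_{\B}=0$, and then invokes a van Est--type argument (Lemma \ref{lemma: passing to algebroid}, which uses connectedness of the $s$-fibers) to conclude $c=0$ from $\kappa=0$. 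Your proposed ``bookkeeping argument'' for $[\tilde X,\tilde Y]$ would at best show the obstruction vanishes along $M$; without the cocycle property and the integration step you cannot conclude $[\tilde X,\tilde Y]\in\Gamma(\H)$ away from the units, so the converse implication ($j_{\B}=0$ and $\nabla^E$ flat $\Rightarrow$ involutive) is not established.

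A smaller inaccuracy: from (\ref{vertical}) with $\alpha,\beta\in\Gamma(\B)$ one gets $l[\beta,\beta']=j_{\B}(\beta')(\rho(\beta))$, so the vertical brackets $[\beta^r,\beta'^r]$ remaining in $\H$ is equivalent only to $j_{\B}$ vanishing on directions in $\rho(\B)$, not to $j_{\B}=0$ as you claim. This is harmless for the theorem, since the mixed brackets $[\tilde X,\beta^r]$ supply the full condition, but it signals that the first family of brackets carries strictly less information than you assert.
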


\begin{example}\label{ex-flat-Cartan} \rm \ Let $\rho: \mathfrak{h}\to \X(M)$ be an infinitesimal action of a Lie algebra $\mathfrak{h}$ on $M$. One has an associated Lie
algebroid $\mathfrak{h}\ltimes M$, which as a vector bundle is the trivial one with fiber $\mathfrak{h}$ (so $\Gamma(\mathfrak{h}\ltimes M)= C^{\infty}(M, \mathfrak{h})$), the anchor is the infinitesimal action and the
bracket is uniquely determined by the Leibniz identity and the condition that, on constant sections $u, v\in \mathfrak{h}$, it coincides with the bracket of $\mathfrak{h}$. In this case
the canonical flat connection
\[ \nabla^{\textrm{flat}}: \X(M)\times C^{\infty}(M, \mathfrak{h})\To C^{\infty}(M, \mathfrak{h})\]
satisfies the conditions from the previous theorem with $E= \mathfrak{h}\ltimes M$, $l= \textrm{Id}$. Hence one obtains a flat involutive $\H$ on the integrating groupoid.
This can be best seen when the infinitesimal action comes from the action of a Lie group $H$ on $M$. Then $\mathfrak{h}\ltimes M$ is the Lie algebroid of the action groupoid 
$H\ltimes M$, which is the manifold $H\times M$ with the groupoid structure 
\[ s(g, x)= x, \ t(g, x)= gx,\ (g, hy)\cdot (h, y)= (g, y).\]
The flat involutive $\H$ on $H\times M$ is simply the foliation with the leaves $\{h\}\times M$ (for $h\in H$). See also Corollary \ref{flat-Cartan}.
\end{example}

\begin{corol} If $\G$ is $s$-simply connected then there is a 1-1 correspondence between
\begin{enumerate}
\item[1.] involutive multiplicative distributions $\H$ on $\G$.
\item[2.] a flat vector bundle $(E, \nabla^{E})$ over $M$, a $\nabla^{E}$-parallel tensor $T: \Lambda^2E\to E$ and a surjective 
vector bundle map $l: A\to E$ satisfying 
\[ l([\alpha, \beta])= \nabla^{E}_{\rho(\alpha)}(l(\beta))- \nabla^{E}_{\rho(\beta)}(l(\alpha))+ T(l(\alpha), l(\beta)), \ \ \forall\ \alpha, \beta\in \Gamma(A).\]
\end{enumerate}
\end{corol}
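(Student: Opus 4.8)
The plan is to read the corollary off the composition of Theorem \ref{t2} and Theorem \ref{t3}, and then to repackage the infinitesimal data. By Theorem \ref{t2}, for $s$-simply connected $\G$ the multiplicative distributions $\H$ correspond bijectively to pairs $(\B, D)$ with $\B\subset A$ a sub-bundle and $D$ a Spencer operator relative to $\B$; here $E= A/\B$ and $l: A\to E$ is the (surjective) quotient map. By Theorem \ref{t3}, $\H$ is involutive precisely when the symbol representation $j_{\B}$ vanishes and the induced connection $\nabla^{E}$ on $E$ is flat. So the whole task is to show that, once $j_{\B}= 0$ and $\nabla^{E}$ is flat, the relative Spencer operator $D$ is equivalent to the single extra datum of a $\nabla^{E}$-parallel tensor $T:\Lambda^2 E\to E$, via the structure equation in the statement. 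The starting observation is that $j_{\B}= 0$ forces $D_X(\beta)= 0$ for every $\beta\in \Gamma(\B)$, so $D_X(\alpha)$ depends on $\alpha$ only through $l(\alpha)$, giving $D_X(\alpha)= \nabla^{E}_X(l(\alpha))$.

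First I would define $T$ directly by $T(l(\alpha), l(\beta)) := l([\alpha,\beta]) - \nabla^{E}_{\rho(\alpha)}(l(\beta)) + \nabla^{E}_{\rho(\beta)}(l(\alpha))$ and verify that this is legitimate. Antisymmetry is built in. Writing $R(\alpha,\beta)$ for the right-hand side, a short Leibniz computation (using $[\alpha, f\beta]= f[\alpha,\beta]+ \Lie_{\rho(\alpha)}(f)\beta$ and the connection Leibniz rule for $\nabla^{E}$) shows that $R$ is $C^{\infty}(M)$-bilinear, hence a tensor $\Lambda^2 A\to E$. To see that it descends to $\Lambda^2 E$ I would check that $R(\alpha,\beta)= 0$ whenever $\beta\in \Gamma(\B)$: then $l(\beta)= 0$, so $R(\alpha,\beta)= l[\alpha,\beta] + \nabla^{E}_{\rho(\beta)}(l(\alpha)) = l[\alpha,\beta] + D_{\rho(\beta)}(\alpha)$, which vanishes by the compatibility condition (\ref{vertical}). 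Thus $T:\Lambda^2 E\to E$ is well-defined, and the structure equation holds by construction.

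The main work is to show that $\nabla^{E}$-parallelism of $T$ is exactly the content of the remaining compatibility (\ref{vertical2}). Here I would first rewrite the induced representation (\ref{nabla-out-of-D}) of $A$ on $E$ in the involutive case as $\nabla_{\alpha}(e') = \nabla^{E}_{\rho(\alpha)}(e') + T(l(\alpha), e')$, which follows by substituting the structure equation into (\ref{nabla-out-of-D}). Feeding $D_X(\alpha)= \nabla^{E}_X(l(\alpha))$ and this formula for $\nabla_{\alpha}$ into (\ref{vertical2}), and expanding the left-hand side $\nabla^{E}_X(l[\alpha,\alpha'])$ through the structure equation, the purely $\nabla^{E}$ terms on the two sides match identically once one uses flatness in the form $\nabla^{E}_X\nabla^{E}_Y - \nabla^{E}_Y\nabla^{E}_X = \nabla^{E}_{[X,Y]}$ (applied with $Y= \rho(\alpha), \rho(\alpha')$); what survives is precisely $\nabla^{E}_X(T(e,e'))= T(\nabla^{E}_X e, e') + T(e, \nabla^{E}_X e')$ with $e= l(\alpha)$, $e'= l(\alpha')$. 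I expect this cancellation of the connection terms against the bracket terms involving $[\rho(\alpha), X]$ and $[\rho(\alpha'), X]$ to be the one genuinely computational point, and the only place where flatness of $\nabla^{E}$ is consumed.

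Finally I would run the construction backwards to obtain the inverse bijection. Given $(E, \nabla^{E}, T, l)$ as in (2), set $\B := \ker(l)$ and define $D_X(\alpha) := \nabla^{E}_X(l(\alpha))$. The Leibniz identity for $D$ relative to $l$ is immediate from that of $\nabla^{E}$; condition (\ref{vertical}) is the structure equation restricted to $\beta\in \Gamma(\B)$; and (\ref{vertical2}) is the same computation as above read in reverse, using flatness of $\nabla^{E}$ and parallelism of $T$. One checks directly that $j_{\B}= 0$ and that the connection produced by Theorem \ref{t3} is the given $\nabla^{E}$, so Theorems \ref{t2} and \ref{t3} return an involutive multiplicative $\H$; the two assignments are mutually inverse by their explicit formulas.
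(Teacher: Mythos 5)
Your proposal is correct and follows essentially the same route as the paper's (much terser) proof: define $T$ by the structure equation, use (\ref{vertical}) for well-definedness, observe that $j_{\B}=0$ forces $D_X(\alpha)=\nabla^E_X(l(\alpha))$, and identify (\ref{vertical2}) with $\nabla^E$-parallelism of $T$ after absorbing the connection terms via flatness. You have simply written out in full the computations the paper leaves implicit, including the explicit inverse construction.
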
 

\begin{proof} The last equation defines $T$ in terms of $\nabla^{E}$ and $l$; the only problem is whether it is well-defined, but this immediately follows from (\ref{vertical}). 
The rest follows from the fact that $D$ is determined by $\nabla^{E}$ (a condition that itself implies that $j_{\mathfrak{g}}= 0)$. Hence one just has to 
rewrite the equation (\ref{vertical2}) in terms of $\nabla^{E}$ and $T$, and one finds the condition that $T$ is $\nabla^E$-parallel. 
\end{proof}


\section{Examples}\label{sec: examples}
\subsection{Jet groupoids}\label{sec: Jet groupoids}
Our motivating example comes from Cartan forms on jet groupoids (and subgroupoids of them). Let $\G$ be a Lie groupoid over $M$. 
Its first jet groupoid, $\Jet^1\G$, consists of 1-jets $\jet^1_xb$ of local bisections of $\G$, with the groupoid structure given by
(where, for bisections and their multiplication, see Remark \ref{bisections}):
\begin{eqnarray*}
s(\jet^1_xb)= x,\ t(\jet^1_xb)= \phi_b(x)
\end{eqnarray*}
\begin{eqnarray*}
\jet^1_{\phi_{b_2}(x)}b_1\cdot\jet^1_xb_2=\jet^1_x(b_1\cdot b_2),\text{ and }(\jet^1_xb)^{-1}=\jet_{\phi_{b}(x)}^1(b^{-1}).
\end{eqnarray*}

Of course, any (local) bisection $b$ of $\G$ induces a (local) bisection $j^1b$ of $\Jet^1\G$ given by $x\mapsto j^{1}_{x}b$; bisections
of $\Jet^1\G$ of this type are called holonomic. The canonical Cartan form on $\Jet^1\G$,
\[ \theta_{\mathrm{can}}\in \Omega^1(\Jet^1\G, t^{\ast}A),\]
is designed to detect the bisections $\zeta$ of $\Jet^1\G$ which are holonomic: the condition is $\zeta^{\ast}\theta_{\mathrm{can}} = 0$.
We recall here the explicit description of $\theta_{\mathrm{can}}$. Let $\pr:\Jet^1\G\to \G$ be the canonical projection and let $\xi$ be a vector tangent to $\Jet^1\G$ at some point
$\jet^1_xb\in \Jet^1\G$. Then the difference
\[(\d \pr)_{\jet^1_xb}(\xi)-(\d b)_x (\d s)_{\jet^1_xb}(\xi) \in T_g\G\] 
is killed by $\d s$, hence it comes from an element in $A_{t(g)}$:
\[\theta_{\mathrm{can}}(\xi)=R_{b(x)^{-1}}((\d \pr)_{\jet^1_xb}(\xi)- (\d b)_x (\d s)_{\jet^1_xb}(\xi))\in A_{t(g)}.\]

\begin{example}\rm The linear Cartan form of Example \ref{ex: linear forms} and the classical Cartan 1-form of Example \ref{Classical Cartan form on the jet groupoids} 
are of this type. In the second case, $\G$ is the pair groupoid of $M$, i.e. $M\times M$, with $s= \pr_1$, $t= \pr_2$, and multiplication $(y, z)\cdot (x, y)= (x, z)$.  
Note also that, in this case, $\textrm{Bis}(\G)= \textrm{Diff}(M)$. 
\end{example}

Observe that the correspondence which associates to an element $\sigma= \jet^{1}_{x}(b) \in \Jet^1\G$ the isomorphism $\lambda_{\sigma}:= (d\phi_b)_x: T_{s(\sigma)}M \to T_{t(\sigma)}M$ determines a representation of $\Jet^1\G$ on $TM$. Similarly, one has a representation of $\Jet^1\G$ on $A$, called the \textbf{adjoint representation}, as follows. A bisection $b$ of $\G$ acts on $\G$ by conjugation
\[\Ad_b(g) = b(t(g))\cdot g \cdot b(s(g))^{-1}.\]
It is clear that this action maps units to units, and source fibers to source fibers. Moreover, the differential of $\Ad_b$ at a unit $x$ depends only on $\jet^1_xb$. We define
\[\Ad_{\jet^1_xb}\al = (\d\Ad_b)_x(\al) \ \ \ (\al \in A_x).\]

\begin{remark}[when working with $\Jet^1\G$]\rm \ \label{when working with jets}
Here is a slightly different description of $\Jet^1\G$, which we will be using whenever we have to work more explicitly with $\Jet^1\G$. 
Since a first jet $\jet^1_xb$ of a bisection $b$ at $x\in M$ is encoded in $g:= b(x)$ and $d_xb: T_xM\to T_g\G$, we see that an element of $\Jet^1\G\tto M$ can be thought of as 
a pair $(g, \sigma)$ where $g\in \G$ and 
\begin{eqnarray*}
\sigma:T_xM\to T_g\G
\end{eqnarray*}
is a splitting of the map $(\d s)_x:T_g\G\to T_xM$ with the property that
\begin{eqnarray}\label{comp}
\lambda_{\sigma}:= \d t\circ\sigma:T_xM\to T_{t(g)}M \text{ is an isomorphism.}
\end{eqnarray}
Of course, $g= \pr(\sigma)$, but we will often use the notation $\sigma_g$ to indicate $g$. 
The groupoid structure of $J^1(\G)$ becomes:
\begin{equation}\label{mult-i-J1}
 s(\sigma_g)= s(g),\ \ t(\sigma_g)= t(g),  \ \ \sigma_g\cdot \sigma_h (u)=(\d m)_{(g, h)}(\sigma_g(\lambda_{\sigma_h}(u)), \sigma_h(u)) .
\end{equation}

With these the adjoint representations of $\Jet^1\G$ on $A$ becomes 
\begin{equation}\label{eq: Ad}
\Ad_{\sigma_g}\al = R_{g^-1} (\d m)_{(g, s(g))}(\sigma_g(\rho(\al)), \al)\in A_y
\end{equation}
for $g: x\to y$, $\alpha\in A_x$ (this follows using the flow of $\alpha$ to compute $(\d\Ad_b)_x(\al)$).
\end{remark}

Let us move to the infinitesimal side of the discussion.  Recall that, for any Lie algebroid $A$ over $M$,
first jets of sections of $A$ form a new algebroid $\Jet^1A$ over $M$, where the anchor is the composition of the anchor of $A$ with the canonical projection
$\pr: \Jet^1A \to A$, and where the bracket is uniquely determined by the Leibniz identity and the condition that
\[[\jet^1\al,\jet^1\be] = \jet^1[\al,\be] \]
for any two sections $\al, \be$ of $A$ (see below for a formula on general sections).

Moreover, $\Jet^1A$ has a canonical adjoint representations on $TM$ and on $A$ (both denoted by $\ad$) determined by the Leibniz  identities and the conditions
\[\ad_{\jet^1\al}X = [\rho(\al),X], \text{ and } \ad_{\jet^1\al}\be = [\al,\be].\]
If $A$ is the Lie algebroid of a Lie groupoid $\G$, then $\Jet^1A$ is the Lie algebroid of $\Jet^1\G$, and these representations correspond to the canonical representations of $\Jet^1\G$ on $TM$ and $A$ respectively.

\begin{remark}\rm \ The formulas for the bracket on $\Jet^1A$ and for the actions can be written for general sections of $\Jet^1A$ using the classical Spencer operator associated to the vector bundle $A$ (see Example \ref{1-jets-convenient}). For the actions,
\[ \nabla_{\xi}(X)= [\rho(\xi), X]+ \rho D_{X}(\xi), \ \  \nabla_{\xi}(\beta)= [\pr(\xi), \beta]+ D_{\rho(\beta)}(\xi).\]
Using these, each $\xi\in \Gamma(A)$ induces a Lie derivative $\Lie_{\xi}$ on $\Omega^1(M, A)$ by
\[ \Lie_{\xi}(\omega)(X)= \nabla_{\xi}(\omega(X))- \omega([\rho(\xi), X]),\]
and then, for the bracket of $\Jet^1A$, one finds
\[ [\xi, \eta]= j^1([\pr(\xi), \pr(\eta)])+ \Lie_{\xi}(D(\eta))- \Lie_{\eta}(D(\xi)).\]
\end{remark}



\begin{prop} Let $\G$ be a Lie groupoid and $A$ a Lie algebroid over $M$. Then:
\begin{enumerate}
\item[1.] The Cartan form $\theta_{\mathrm{can}}\in \Omega^1(\Jet^1\G, t^{\ast}A)$ is a multiplicative form with values in the adjoint representation.
\item[2.] The classical Spencer operator of $A$ (Example \ref{1-jets-convenient}), denoted here
\[ D^A: \Omega^1(\Jet^1A)\To \Omega^1(M, A),\] 
is a Spencer operator on the algebroid $\Jet^1A$ relative to $\pr: \Jet^1 A\to A$, where the induced action on $A$ is the adjoint action. 
\item[3.] If $A= Lie(\G)$, the Spencer operator of $\theta_{\mathrm{can}}$ (cf. Theorem \ref{t1}) is $D^A$.
\end{enumerate}
\end{prop}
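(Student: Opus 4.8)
For Part 1, I would verify the multiplicativity identity (\ref{eq: multiplicative}) by a direct computation using the pair description $(g,\sigma)$ of $\Jet^1\G$ and the multiplication formula (\ref{mult-i-J1}). Rewritten in these terms, the Cartan form reads $\theta_{\mathrm{can}}(\xi) = R_{g^{-1}}(\d\pr(\xi) - \sigma(\d s(\xi)))$ at a point $\sigma_g$. Representing a composable pair of tangent vectors by curves $\epsilon \mapsto \sigma_{g(\epsilon)}$ and $\epsilon \mapsto \tau_{h(\epsilon)}$, I would differentiate the product $\sigma_{g(\epsilon)}\cdot\tau_{h(\epsilon)}$ through (\ref{mult-i-J1}); since the underlying arrow of the product is $g(\epsilon)h(\epsilon)$, the $\d\pr$ of the product velocity is $\d m(\dot g,\dot h)$, and one then matches the two sides of (\ref{eq: multiplicative}) after applying $R_{(gh)^{-1}} = R_{h^{-1}}R_{g^{-1}}$ and recognizing the action $\sigma_g\cdot(-)$ through formula (\ref{eq: Ad}). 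Conceptually this \emph{must} hold because $\theta_{\mathrm{can}}$ detects precisely the holonomic bisections $\jet^1 b$, and these form a subgroup of $\mathrm{Bis}(\Jet^1\G)$ (Remark \ref{bisections}); but turning this heuristic into the pointwise identity is where the real labor lies, and I expect this to be the main obstacle.

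For Part 2, I would use the reformulation of a $1$-Spencer operator with surjective symbol from Definition \ref{relative-Spencer}, applied to the algebroid $\Jet^1A$ with subbundle $\ker\pr$ and symbol $l = \pr$. The Leibniz identity relative to $\pr$ holds by the defining property of the classical Spencer operator (Example \ref{1-jets-convenient}). Condition (\ref{vertical}) holds because $\pr\colon \Jet^1A\to A$ is a morphism of Lie algebroids: for $\beta\in\Gamma(\ker\pr)$ the anchor of $\Jet^1A$ kills $\beta$ so the left side vanishes, while $\pr[\xi,\beta] = [\pr\xi,\pr\beta] = 0$ so the right side vanishes too. For (\ref{vertical2}) I would feed the bracket formula on $\Jet^1A$ stated just above the proposition, namely $[\xi,\eta] = \jet^1[\pr\xi,\pr\eta] + \Lie_\xi D(\eta) - \Lie_\eta D(\xi)$ (the last two terms read inside $\Gamma(\Jet^1A)$ via $i$), into $D^A$; since $D^A\circ\jet^1 = 0$ and $D^A\circ i = \mathrm{id}$ by the Spencer decomposition (\ref{J-decomposition}), this collapses to $D^A[\xi,\eta] = \Lie_\xi(D^A\eta) - \Lie_\eta(D^A\xi)$, which evaluated on $X$ is exactly (\ref{vertical2}). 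It then remains only to check that the representation recovered from $D^A$ by (\ref{nabla-out-of-D}) is the adjoint action, which is the direct comparison of (\ref{nabla-out-of-D}) with the stated formula $\nabla_\xi\beta = [\pr\xi,\beta] + D_{\rho(\beta)}\xi$ for $\ad$.

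For Part 3, I would first compute the symbol $l_{\theta_{\mathrm{can}}} = u^{\ast}(i_\xi\theta_{\mathrm{can}})$ of Theorem \ref{t1}. Evaluating $\theta_{\mathrm{can}}$ on a vector tangent to an $s$-fibre at a unit, where $g = 1_x$ forces $R_{g^{-1}} = \mathrm{id}$ and $\d s(\xi) = 0$ kills the second term, leaves exactly $\d\pr(\xi)$, so $l_{\theta_{\mathrm{can}}} = \pr$. Next I would show that $D_{\theta_{\mathrm{can}}}$ annihilates holonomic jets: the flow of $\jet^1\alpha$ on $\Jet^1\G$ is the prolongation $\jet^1(\phi^{\epsilon}_\alpha)$ of the flow bisections of $\alpha$, because $b\mapsto\jet^1 b$ is a group homomorphism $\mathrm{Bis}(\G)\to\mathrm{Bis}(\Jet^1\G)$ whose induced map on generators, under the identification $\Jet^1A = \mathrm{Lie}(\Jet^1\G)$, sends $\alpha$ to $\jet^1\alpha$; these bisections are holonomic, so $\theta_{\mathrm{can}}$ annihilates their differentials, and the flow formula (\ref{eq: explicit formula}) yields $D_{\theta_{\mathrm{can}}}(\jet^1\alpha) = 0$. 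Finally, $D_{\theta_{\mathrm{can}}}$ is a Spencer operator with symbol $\pr$ (Theorem \ref{t1}), hence satisfies Leibniz relative to $\pr$; together with $D_{\theta_{\mathrm{can}}}(\jet^1\alpha) = 0$ this forces $D_{\theta_{\mathrm{can}}} = D^A$ by the uniqueness characterization of the classical Spencer operator in Example \ref{1-jets-convenient}. The one delicate point here, besides the computation of Part 1, is the functoriality statement $\phi^{\epsilon}_{\jet^1\alpha} = \jet^1(\phi^{\epsilon}_\alpha)$, i.e. that prolongation of flows is compatible with $\Jet^1A = \mathrm{Lie}(\Jet^1\G)$, which I would justify from the naturality of the jet construction with respect to one-parameter groups of bisections.
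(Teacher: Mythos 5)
Your proposal is correct, and for parts 1 and 3 it follows essentially the same route as the paper: a direct verification of (\ref{eq: multiplicative}) in the splitting picture of Remark \ref{when working with jets}, using (\ref{mult-i-J1}), (\ref{eq: Ad}) and the fact that $\pr:\Jet^1\G\to\G$ is a groupoid morphism, followed by the observations that $l_{\theta_{\mathrm{can}}}=\pr$ and that $D_{\theta_{\mathrm{can}}}(\jet^1\al)=0$; the latter rests on $\phi^{\eps}_{\jet^1\al}=\jet^1\phi^{\eps}_{\al}$, which the paper also asserts with exactly the one-line naturality justification you give. The only real divergence is part 2: the paper's proof does not address it separately (it is implicit from parts 1 and 3 together with Theorem \ref{t1} when $A$ integrates), whereas you verify the axioms of Definition \ref{relative-Spencer} directly, feeding the bracket formula on $\Jet^1A$ into $D^{A}$ via the Spencer decomposition (\ref{J-decomposition}). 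Your check is sound and buys something the paper's route does not, namely that part 2 holds for a non-integrable $A$ as stated; the one point to make explicit is that $\pr:\Jet^1A\to A$ is a morphism of Lie algebroids (which follows from $[\jet^1\al,\jet^1\be]=\jet^1[\al,\be]$ together with the Leibniz rule), since that is what makes both sides of (\ref{vertical}) vanish for $\be\in\Gamma(\ker\pr)$ and identifies the $\nabla$ of (\ref{nabla-out-of-D}) with the adjoint action.
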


\begin{proof} We first show that $\theta_{\mathrm{can}}$ is multiplicative, i.e. that:
\[(m^{\ast}\theta_{\mathrm{can}})|_{(\sigma_g,\sigma_h)} = \pr_1^{\ast}\theta_{\mathrm{can}} + \Ad_{\sigma_g}\pr_2^{\ast}\theta_{\mathrm{can}}.\]

We use the description from Remark \ref{when working with jets}. Let $\xi_1\in T_{\sigma_g}\Jet^1\G$ and $\xi_2\in T_{\sigma_h}\Jet^1\G$ be such that $\d s(\xi_1)=\d t(\xi_2)$. Denote by $X_1 = \d \pr(\xi_1) \in T_g\G$ and $v_1 = \d s(X_1) = \d s(\xi_1) \in T_{s(g)}\G$. Similarly, let $X_2 = \d \pr(\xi_2) \in T_h\G$ and $v_2 = \d s(X_2) = \d s(\xi_2)\in T_{s(h)}M$. Computing $\theta_{\mathrm{can}}(\d m (\xi_1, \xi_2))$ we find
\[\begin{aligned}
&\  R_{(gh)^{-1}}(\d \pr(\d m(\xi_1,\xi_2)) - (\sigma_g\cdot\sigma_h)(\d s (\d m(\xi_1,\xi_2))))= \\
&=R_{(gh)^{-1}}(\d m(X_1,X_2) - (\sigma_g\cdot\sigma_h)(v_2))\\
&=R_{(gh)^{-1}}(\d m(X_1,X_2) - \d m(\sigma_g(\lambda_{\sigma_2}(v_2)), \sigma_h(v_2)))\\
&=R_{(gh)^{-1}}(\d m(X_1-\sigma_g(\lambda_{\sigma_2}(v_2)), X_2-\sigma_h(v_2)))\\
&=R_{g^{-1}}(\d m(X_1-\sigma_g(\lambda_{\sigma_2}(v_2)),R_{h^{-1}}( X_2-\sigma_h(v_2))))\\
&=R_{g^{-1}}(\d m(X_1- \sigma_g(v_1), 0_{s(g)}) + \d m(\sigma_g(v_1)-\sigma_g(\lambda_{\sigma_2}(v_2)),R_{h^{-1}}( X_2-\sigma_h(v_2))))\\
&=R_{g^{-1}}(X_1- \sigma_g(v_1) + \d m(\sigma_g(v_1)-\sigma_g(\lambda_{\sigma_2}(v_2)),R_{h^{-1}}( X_2-\sigma_h(v_2))))\\
&=R_{g^{-1}}(X_1- \sigma_g(v_1)) + \Ad_{\sigma_g}(R_{h^{-1}}( X_2-\sigma_h(v_2)))\\
&=\theta_{\mathrm{can}}(\xi_1) + \Ad_{\sigma_g}\theta_{\mathrm{can}}(\xi_2)
\end{aligned}\] 
where we have used the fact that $\pr:\Jet^1\G\to \G$ is a Lie groupoid morphism.
Let $(D,l)$ denote the Spencer operator of $\theta_{\mathrm{can}}$. It is clear from the definition of $l$ that $l= \pr$ and it suffices to prove that $D$ satisfies the holonomicity condition 
$D(\jet^1\al)=0$, for all $\al \in \Gamma(A)$. Let $\zeta=\jet^1\al$. In the explicit formula \eqref{eq: explicit formula} for $D$, we remark that  $\phi_\zeta^\epsilon(x)=(\d\phi^\epsilon_\al)_x$, hence
\[\begin{aligned}
\theta_{\mathrm{can}}((\d\phi^\epsilon_\zeta)_x(X_x))&=R_{(\phi^\eps_\al(x))^{-1}}(\d \pr((\d\phi_{\zeta}^{\eps})_x(X_x)) - (\d \phi^{\eps}_\al)_x(\d s((\d\phi^\epsilon_\zeta)_x(X_x))))\\
&=R_{(\phi^\eps_\al(x))^{-1}}((\d \phi^{\eps}_\al)_x(X_x) - (\d \phi^{\eps}_\al)_x(X_x)) = 0,
\end{aligned}
\]
hence $D(\jet^1\al)(X)= 0$. 
\end{proof}

\begin{remark}\rm \
Similarly one talks about the $k$-jet groupoid $\Jet^k\G$ and the $k$-jet algebroid $\Jet^kA$; completely analogous one has a Cartan form 
\[ \theta_{\textrm{can}} \in \Omega^1(J^k\G, t^*\Jet^{k-1}A)\]
and the previous proposition holds for all $k$'s.
\end{remark}

\subsection{Multiplicative distributions; Theorem \ref{t1} $\Rightarrow$ Theorem \ref{t2}}
\label{sec: distributions}

We concentrate now on the case Pfaffian case ($k= 1$), explaining in particular that Theorem \ref{t2} follows from Theorem \ref{t1}. As we have already mentioned,  
while Cartan was working himself with $1$-forms, many people preferred the dual picture. 
Recall that, on any manifold $P$, one has a 1-1 correspondence between
\begin{enumerate}
\item regular $1$-forms $\theta\in \Omega^1(P, E)$, where $E$ is some vector bundle over $P$. 
\item distributions $\H$ on $P$, i.e. vector sub-bundles $\H\subset TP$.
\end{enumerate}
Regular means that $\theta$ is pointwise surjective. 
In one direction,  $\H= \textrm{Ker}(\theta)$; conversely, $E= TP/\H$ and $\theta$ is the canonical projection. This remark has a multiplicative version:

\begin{lemma} Let $\G$ be a Lie groupoid. Then for any representation $E$ of $\G$ and any regular $E$-valued multiplicative form $\theta\in \Omega^1(\G, t^{\ast}E)$
\[ \H_{\theta}:= \textrm{Ker}(\theta)\subset T\G\]
is a multiplicative distribution on $\G$. Moreover, any multiplicative distribution arises in this way.
\end{lemma}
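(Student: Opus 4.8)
The plan is to establish a correspondence between regular multiplicative $E$-valued $1$-forms $\theta$ and multiplicative distributions $\H$, by showing that the pointwise kernel construction $\H_\theta := \Ker(\theta)$ both produces a genuine multiplicative distribution and is invertible. The key observation is that the multiplicativity condition \eqref{eq: multiplicative} for $\theta$ is precisely what forces $\Ker(\theta)$ to be closed under the groupoid structure of $T\G \tto TM$.

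Let me think through the verification that $\H_\theta$ is a subgroupoid of $T\G$.

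The plan is to reduce the multiplicativity condition \eqref{eq: multiplicative} to its pointwise form on tangent vectors and then read off the subgroupoid structure of $\Ker(\theta)$ directly. Concretely, \eqref{eq: multiplicative} says that for any composable pair of tangent vectors $\xi\in T_g\G$, $\eta\in T_h\G$ (i.e.\ $\d s(\xi)=\d t(\eta)$) one has
\[ \theta_{gh}(\d m(\xi,\eta))=\theta_g(\xi)+g\cdot\theta_h(\eta). \]
I would treat the two implications separately, using this identity as the single computational engine.

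For the first claim, regularity gives that $\Ker(\theta)$ has constant rank, hence is a genuine distribution $\H_{\theta}\subset T\G$. To see it is a subgroupoid of $T\G\tto TM$ over the full base, I first check $u^{*}\theta=0$: feeding $\xi=\eta=\d u(v)$ into the displayed identity and using that $1_x$ acts as the identity on $E_x$ forces $\theta_{1_x}(\d u(v))=0$, so every unit $\d u(v)$ lies in $\H_{\theta}$. Closure under multiplication is then immediate, since for composable $\xi,\eta\in\H_{\theta}$ one gets $\theta_{gh}(\d m(\xi,\eta))=\theta_g(\xi)+g\cdot\theta_h(\eta)=0$. Closure under inversion follows by applying the identity to the pair $(\xi,\d i(\xi))$ over $(g,g^{-1})$: as $\d m(\xi,\d i(\xi))=\d u(\d t(\xi))$ and $u^{*}\theta=0$, we obtain $g\cdot\theta_{g^{-1}}(\d i(\xi))=-\theta_g(\xi)$, so $\xi\in\H_{\theta}$ implies $\d i(\xi)\in\H_{\theta}$. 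Together with the source restricting to a surjective submersion onto $TM$ (it admits the unit section), this exhibits $\H_{\theta}$ as a multiplicative distribution.

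For the converse, start with a multiplicative distribution $\H$ and set $\B=\H^{s}|_{M}$, $E=A/\B$. The quotient bundle $T\G/\H$ over $\G$ will be identified with $t^{*}E$. Since $\H$ is transversal to the $s$-fibers, the inclusion $T^{s}\G\hookrightarrow T\G$ induces an isomorphism $T_g\G/\H_g\cong T^{s}_g\G/\H^{s}_g$; right translation $R_{g^{-1}}$, which preserves $\H^{s}$ because $\H$ is closed under multiplication and contains the zero section, then identifies $T^{s}_g\G/\H^{s}_g$ with $A_{t(g)}/\B_{t(g)}=E_{t(g)}$. Composing these gives the desired $\theta\in\Omega^{1}(\G,t^{*}E)$ as the projection $T\G\to T\G/\H\cong t^{*}E$, and by construction $\Ker(\theta)=\H$ with $\theta$ regular.

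The main work, and the step I expect to be the real obstacle, is to equip $E$ with a $\G$-representation for which this $\theta$ is multiplicative. I would define the action of $g\colon x\rmap y$ on a class $c=\theta_h(\eta)\in E_x$ (with $t(h)=x$) by $g\cdot c:=\theta_{gh}(\d m(\xi,\eta))$, where $\xi\in\H_g$ is any lift with $\d s(\xi)=\d t(\eta)$; such a $\xi$ exists precisely because $\H$ is transversal to the $s$-fibers. Here all three subgroupoid axioms enter: independence of the choice of $\xi$ and of the representative $\eta$ both reduce, via linearity of $\d m$ on composable pairs, to the statement that $\d m$ carries $\H$-vectors to $\H$-vectors, while well-definedness on the class $c$ (independence of $h$) together with unitality and associativity of the action follow from the groupoid identities for $\H$ and the relation $u^{*}\theta=0$. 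Granting this, $\theta$ is multiplicative essentially by the definition of the action. The delicate point throughout is the bookkeeping of the composability constraints $\d s(\xi)=\d t(\eta)$ so that each application of $\d m$ is legitimate; once the pointwise form of multiplicativity is available, everything else is formal.
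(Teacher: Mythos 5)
Your argument for the converse is sound and is genuinely more self-contained than the paper's: you define the action of $g$ on $E_x=A_x/\B_x$ directly by pushing a representative tangent vector through $\d m$ against an $\H$-lift, and you deduce well-definedness and the multiplicativity of the projection $\theta_{\H}$ from the closure of $\H$ under $\d m$, $\d i$ and units alone. The paper instead routes everything through the first jet groupoid: it defines the action by $\Ad^{\H}_g(\al\ \mathrm{mod}\ \B)=(\Ad_{\sigma_g}\al)\ \mathrm{mod}\ \B$ for a splitting $\sigma_g:T_{s(g)}M\to\H_g$, and obtains multiplicativity of $\theta_{\H}$ by expressing it as the reduction mod $\B$ of the canonical Cartan form $\theta_{\mathrm{can}}$ on $\Jet^1\G$, whose multiplicativity was established beforehand. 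The two definitions of the action agree (take $h=1_x$ and $\eta=\al\in A_x$ in your formula and you recover $\Ad_{\sigma_g}\al=R_{g^{-1}}\d m(\sigma_g(\rho(\al)),\al)$), so what your route buys is independence from the Cartan-form machinery, at the price of doing the composability bookkeeping by hand --- which you correctly identify as the only delicate point there.

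There is, however, one step in the forward direction whose justification would fail as written: $s$-transversality, i.e.\ that $\d s$ maps each fibre $(\H_{\theta})_g$ onto $T_{s(g)}M$, equivalently $T_g\G=(\H_{\theta})_g+T^s_g\G$. This is needed for $\H_{\theta}\tto TM$ to be a Lie subgroupoid: its source map is a vector bundle map covering the submersion $s$, so it is a submersion precisely when it is fibrewise surjective. The existence of the unit section $v\mapsto \d u(v)$ only gives surjectivity of $\d s:\H_{\theta}\to TM$ as a map of total spaces; it says nothing about the fibre over a non-unit $g$. The correct argument combines regularity with multiplicativity: applying your pointwise identity to the composable pair $(\al,0_g)$ with $\al\in A_{t(g)}$ gives $\theta_g(R_g\al)=\theta_{1_{t(g)}}(\al)$, so $\theta_g(T^s_g\G)=\theta_{1_{t(g)}}(A_{t(g)})$; since $\theta_{1_{t(g)}}$ is surjective by regularity and vanishes on $\d u(TM)$ by your unit computation, while $T_{1_{t(g)}}\G=A_{t(g)}\oplus \d u(T_{t(g)}M)$, one gets $\theta_g(T^s_g\G)=E_{t(g)}$, which is equivalent to $\Ker(\theta_g)+T^s_g\G=T_g\G$. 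Everything else in your verification (units, closure under $\d m$ and $\d i$) is correct.
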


The fact that $\H_{\theta}$ is multiplicative follows immediately. We now concentrate on the last part, which also gives us the opportunity for having a closer look at the multiplicativity condition for
distributions. Note that, given $\H\subset T\G$, multiplicativity of $\H$ (Definition \ref{def-pf-syst}) is equivalent to:
\begin{enumerate}
\item $\H$ is closed under $\d m$, i.e., for any $X_g\in \H_g$, $Y_h\in \H_h$ for which $\d s(X_g)=\d t(Y_h)$, $\d_{(g,h)}m(X_g,Y_h)\in \H_{gh}.$
\item $\H$ is closed under $\d i$, i.e., $\d i(\H_g)=\H_{g^{-1}}.$
\item At units $x=1_x$, $\H_x$ contains $T_xM$. 
\item $\H$ is $s$-transversal, i.e., $T\G=\H+T^s\G$.
\end{enumerate}
Note that the last condition is equivalent to the surjectivity of $(ds): \H\rmap TM$; it actually implies (using a dimension counting) that the last map is not only surjective, but also a submersion 
(which is necessary for $\H$ to be a Lie groupoid over $TM$). The same last condition implies that $\H^s=\H\cap T^s\G$ has constant rank. While
$T^s\G$ restricted to $M$ gives the Lie algebroid $A$ of $\G$ the restriction of $\H^s$ induces the symbol sub-bundle 
\[  \mathfrak{g}:= \H^s|_{M}\subset A. \]
Similarly, while right translations induce an isomorphism of vector bundles, $r: T^{s}\G \stackrel{\sim}{\To} t^{\ast}A$, $r(X_g)= R_{g^{-1}}(X_g)$,
it restricts to an isomorphism $\H^s\cong t^{\ast} \mathfrak{g}$. 
Passing to quotients, we obtain a vector bundle over $M$
\[ \ E:= A/\mathfrak{g} ,\]
and an isomorphism of vector bundles over $\G$ (where we use again $4.$ above)
\[ T\G/\H\simeq T^s\G/\H^s \overset{r}{\To} t^{\ast}(E) .\]
Hence the canonical projection $T\G\to T\G/\H$ can be interpreted as a form
\[ \theta_{\H}\in \Omega^1(\G, t^*E).\]
Finally, there is an induced ``adjoint action'' of $\G$ on $E$: for $g\in \G$, 
\[\Ad^\H_g: E_{s(g)}\To E_{t(g)} ,\ \ \Ad^\H_g(\al \mod \B) = (\Ad_{\sigma_g}\al)\mod \B,\]
where $\sigma_g: T_{s(g)}M \To \H_g\subset T_g\G$ is any splitting of $(\d s)_g$ 
and where $\Ad$ is the adjoint representation 
of $\Jet^1\G$ on $A$ (see Section \ref{sec: Jet groupoids}). With this:

\begin{lemma}\label{lemma-from-H-to-theta} $E$ is a representation of $\G$ and $\theta_{\H}\in \Omega^1(\G, E)$ is 
multiplicative.
\end{lemma}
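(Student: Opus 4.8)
The plan is to prove the two assertions --- that $\Ad^\H$ makes $E=A/\mathfrak{g}$ into a representation, and that $\theta_{\H}$ is multiplicative --- by recognizing both as the reduction modulo $\mathfrak{g}$ of the corresponding statements for $\Ad$ and $\theta_{\mathrm{can}}$ on $\Jet^1\G$, which were established in the Proposition above. Accordingly I would organize the argument around the subset
\[ \tilde\H_g:=\{\sigma_g\in\Jet^1\G : \operatorname{Im}(\sigma_g)\subset\H_g\}, \]
which by $s$-transversality is a nonempty affine subspace of the fiber of $\pr\colon\Jet^1\G\to\G$ over $g$.

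First I would check that $\Ad^\H_g(\alpha\bmod\mathfrak{g})=\Ad_{\sigma_g}\alpha\bmod\mathfrak{g}$ is well defined, which is the conceptual heart of the lemma, since it is exactly here that the multiplicativity of $\H$ is used. For independence of the splitting, two elements of $\tilde\H_g$ differ by a map $\tau\colon T_{s(g)}M\to\H^s_g$; using $(\d m)_{(g,1_x)}(-,0_{1_x})=\operatorname{id}$ on $T^s_g\G$ together with the linearity of $(\d m)$, formula \eqref{eq: Ad} shows that $\Ad_{\sigma_g}\alpha$ changes by $R_{g^{-1}}(\tau(\rho(\alpha)))=r(\tau(\rho(\alpha)))$, which lies in $\mathfrak{g}_{t(g)}$ because $r$ restricts to the isomorphism $\H^s\cong t^{\ast}\mathfrak{g}$. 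For independence of the representative, if $\alpha\in\mathfrak{g}_x=\H^s_{1_x}$ then $\sigma_g(\rho(\alpha))\in\H_g$ and $\alpha\in\H_{1_x}$ are composable, so closure of $\H$ under $(\d m)$ gives $(\d m)_{(g,1_x)}(\sigma_g(\rho(\alpha)),\alpha)\in\H_g$; this vector is $s$-vertical because $s\circ m=s\circ\pr_2$, hence lies in $\H^s_g$ and maps into $\mathfrak{g}_{t(g)}$ under $r=R_{g^{-1}}$. Thus $\mathfrak{g}$ is an $\Ad$-invariant subbundle and $\Ad^\H$ is a well-defined linear action.

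Next I would verify the representation axioms. At a unit I take $\sigma_{1_x}=(\d u)_x$, which lands in $\H$ by the condition $T_xM\subset\H_{1_x}$, and since $\Ad_{(\d u)_x}=\operatorname{id}_{A_x}$ this gives $\Ad^\H_{1_x}=\operatorname{id}$. For composability, the key observation is that $\tilde\H$ is closed under the multiplication of $\Jet^1\G$: by the product formula \eqref{mult-i-J1}, $\sigma_g\cdot\sigma_h(u)=(\d m)_{(g,h)}(\sigma_g(\lambda_{\sigma_h}(u)),\sigma_h(u))$ has both entries in $\H$, so closure of $\H$ under $(\d m)$ forces $\sigma_g\cdot\sigma_h\in\tilde\H_{gh}$. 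Choosing this admissible splitting over $gh$, the representation property $\Ad_{\sigma_g\cdot\sigma_h}=\Ad_{\sigma_g}\circ\Ad_{\sigma_h}$ of $\Jet^1\G$ (Section \ref{sec: Jet groupoids}) descends to $\Ad^\H_{gh}=\Ad^\H_g\circ\Ad^\H_h$; smoothness follows from local smooth sections of $\pr\colon\tilde\H\to\G$, and inverses are automatic for a groupoid action (equivalently, $\tilde\H$ is closed under inversion since $\H$ is closed under $\d i$). Hence $E$ is a representation of $\G$.

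Finally, multiplicativity of $\theta_{\H}$ is a transcription of the Proposition's computation. Using any $\sigma_g\in\tilde\H_g$ one has $\theta_{\H}(X_g)=R_{g^{-1}}(X_g-\sigma_g(\d s(X_g)))\bmod\mathfrak{g}$, a formula of exactly the shape of $\theta_{\mathrm{can}}$ but reduced modulo $\mathfrak{g}$; changing $\sigma_g$ alters the bracketed vector by an element of $\H^s_g$, so this is independent of the splitting. For composable $(X_g,Y_h)$ I evaluate $\theta_{\H}((\d m)(X_g,Y_h))$ with the admissible splitting $\sigma_g\cdot\sigma_h$ over $gh$, decompose each tangent vector into its $\sigma$-horizontal and $s$-vertical parts, and run the identical chain of equalities used for $\theta_{\mathrm{can}}$ --- bilinearity of $(\d m)$, the identity $R_{(gh)^{-1}}=R_{g^{-1}}\circ R_{h^{-1}}$, and the identification of $R_{g^{-1}}(\d m)(\sigma_g(\rho(\beta)),\beta)$ with $\Ad_{\sigma_g}\beta$ from \eqref{eq: Ad} --- obtaining $\theta_{\H}(X_g)+\Ad^\H_g(\theta_{\H}(Y_h))$ after reduction modulo $\mathfrak{g}$. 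The only substantive point beyond the earlier computation is that this reduction is precisely what cancels the dependence on all the auxiliary choices of splitting; consequently I expect the main obstacle to be not the computation itself but the careful bookkeeping of the two well-definedness reductions above, which is where the multiplicativity conditions on $\H$ genuinely enter.
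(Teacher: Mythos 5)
Your proposal is correct and follows essentially the same route as the paper: both arguments reduce everything to the Proposition on $\theta_{\mathrm{can}}$ and $\Ad$ for $\Jet^1\G$, using that splittings with image in $\H$ are closed under the jet-groupoid multiplication, and then pass to the quotient modulo $\mathfrak{g}$. You are somewhat more thorough than the paper in explicitly verifying the representation axioms (units, composition, smoothness) for $\Ad^{\H}$, which the paper leaves implicit after checking well-definedness, but this is a matter of completeness rather than a different method.
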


\begin{proof} To see that $\Ad^\H$ is well defined we note that, if $\be \in \mathfrak{g}$, then
\[\Ad_{\sigma_g}\be = R_{g^{-1}}\d m (\sigma_g(\rho(\be)), \be),\]
which belongs to $\mathfrak{g}$ due to $\H$ being multiplicative. Moreover, if $\sigma'_g$ is another splitting of $\d s$ whose image lies in $\H$, then
\[\begin{aligned}
\Ad_{\sigma_g}\al - \Ad_{\sigma'_g}\al& =R_{g^{-1}}(\d m (\sigma_g(\rho(\al)), \al) - \d m (\sigma'_g(\rho(\al)), \al))\\
&= R_{g^{-1}}\d m (\sigma_g(\rho(\al)) - \sigma'_g(\rho(\al)), 0_{s(g)}) 
\end{aligned}\]
which also belongs to $\mathfrak{g}$, for all $\al \in \Gamma(A)$. It follows that $\Ad^{\H}_g$ is independent of the choice of splitting $\sigma_g$.

We now show that $\theta_{\H}$ is multiplicative for this representation. Observe that for $\xi \in T_g\G$, if $\tilde{\xi}$ is any lift of $\xi$ to $T_{\sigma_g}\Jet^1\G$, then
\[\theta_g(\xi) = \theta_{\mathrm{can},\sigma_g}(\tilde{\xi})\mod B,\]
where, again $\sigma_g$ is any splitting of $\d s$ whose image lies in $\H$, and $\theta_{\mathrm{can}}$ denotes the canonical form of $\Jet^1\G$ (see Section \ref{sec: Jet groupoids}). Also, since $\H$ is multiplicative, if $\sigma_g$ and $\sigma_h$ are splittings of $\d s$ whose image lie in $\H$, then also the image of $\sigma_g\cdot \sigma_h$ lies in $\H$ (whenever the product is defined). It follows that
\[\begin{aligned}
\theta_{gh}(\d m(\xi_1, \xi_2)) & = (\theta_{\mathrm{can}, \sigma_g\sigma_h}(\d m(\tilde{\xi_1},\tilde{\xi_2})))\mod B \\
&=(\theta_{\mathrm{can},\sigma_g}(\tilde{\xi_1}) + \Ad_{\sigma_g}\theta_{\mathrm{can},\sigma_h}(\tilde{\xi_2}))\mod B\\
&=\theta_g(\xi_1) + \Ad^\H_{\sigma_h}(\xi_2).
\end{aligned}\]
\end{proof}

Of course, Theorem \ref{t1} now follows from Theorem \ref{t2} applied to $\theta_{\H}$, combined with the 
reformulation of Spencer operators (Remark \ref{remark-dual}). What we still have to prove is that the explicit formula (\ref{eq: explicit formula}) for $D$ (from 
Theorem \ref{t1}) gives the explicit formula (\ref{D}) (from Theorem \ref{t2}). With the right hand side of (\ref{eq: explicit formula}) in mind, we consider more general expressions
of type:
\[(\Lie_{\al}\omega)_g:=\frac{d}{d\eps}\big{|}_{\eps=0}(\varphi^{\eps}_{\al^r}(g))^{-1}\cdot(\varphi^\epsilon_{\al^r})^*\omega|_{\varphi^\epsilon_{\al^r}(g)}\]
for $\alpha\in \Gamma(A)$ and $\omega\in \Omega^k(\G,t^*E)$ (see also Remark \ref{right-invariance}). This defines
\[\Lie_{\al}:\Omega^k(\G,t^*E)\To \Omega^k(\G,s^*E).\]

\begin{lemma} For any vector field $\xi\in \X(\G)$, $\omega \in \Omega^k(\G, t^{\ast}E)$, $g\in \G$:
\[[i_\xi,\Lie_{\al}](\omega)_g=g^{-1}\cdot\omega_g([\xi,\al^r]),\]
\end{lemma}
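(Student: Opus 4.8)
The plan is to realise $\Lie_{\al}$ as the $\eps$-derivative of a single ``twisted pullback'' operator, and then to reduce the identity to the elementary fact that the Lie derivative of a vector field along a flow measures the first-order failure of the flow to preserve it. For $\al\in\Gamma(A)$ write $\psi_\eps := \varphi^{\eps}_{\al^r}$ for the flow of the right-invariant field $\al^r$, and for $\omega\in\Omega^k(\G,t^*E)$ define $c_\eps$ by
\[(c_\eps\omega)_g(v_1,\ldots,v_k) := (\psi_\eps(g))^{-1}\cdot\,\omega_{\psi_\eps(g)}\big((\d\psi_\eps)_g v_1,\ldots,(\d\psi_\eps)_g v_k\big).\]
The first thing I would record is that, since $\al^r$ is tangent to the $s$-fibers, $s\circ\psi_\eps = s$, so $(\psi_\eps(g))^{-1}$ is an arrow ending at $s(g)$ and the action $(\psi_\eps(g))^{-1}\cdot$ lands in $E_{s(g)}$. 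Hence $(c_\eps\omega)_g$ lives in the fixed, $\eps$-independent finite-dimensional space $\Lambda^k T_g^*\G\otimes E_{s(g)}$, and by definition $\Lie_\al\omega = \tfrac{\d}{\d\eps}\big|_0 c_\eps\omega$. Because everything happens in this one vector space, $i_\xi$ commutes with $\tfrac{\d}{\d\eps}\big|_0$, so that
\[[i_\xi,\Lie_\al]\omega = \tfrac{\d}{\d\eps}\Big|_0\big(i_\xi c_\eps - c_\eps i_\xi\big)\omega.\]

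Next I would compute the inner commutator directly. In $i_\xi c_\eps\omega$ the vector $\xi$ is contracted after transport, contributing $(\d\psi_\eps)_g\xi_g$ in the first slot, whereas in $c_\eps i_\xi\omega$ it is contracted before, contributing $\xi_{\psi_\eps(g)}$; all other slots agree. Evaluating on fixed $v_2,\ldots,v_k\in T_g\G$, the two contributions combine linearly into a single slot, giving
\[\big((i_\xi c_\eps - c_\eps i_\xi)\omega\big)_g(v_2,\ldots,v_k) = (\psi_\eps(g))^{-1}\cdot\,\omega_{\psi_\eps(g)}\big((\d\psi_\eps)_g\xi_g - \xi_{\psi_\eps(g)},(\d\psi_\eps)_g v_2,\ldots,(\d\psi_\eps)_g v_k\big).\]
The decisive feature is that the distinguished first argument $(\d\psi_\eps)_g\xi_g - \xi_{\psi_\eps(g)}$ vanishes at $\eps = 0$.

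Then I would differentiate at $\eps=0$. Pulling all arguments back to $T_g\G$ by $(\d\psi_{-\eps})$ (this leaves the pairing unchanged and keeps us in the fixed space, since the resulting covector is exactly $(c_\eps\omega)_g$), the slots $v_i$ become constant in $\eps$, while the $\xi$-slot becomes $\xi_g - (\d\psi_{-\eps})\xi_{\psi_\eps(g)}$, which vanishes at $\eps=0$ and has $\eps$-derivative $-[\al^r,\xi]_g = [\xi,\al^r]_g$ by the standard formula $\Lie_{\al^r}\xi = \tfrac{\d}{\d\eps}\big|_0(\d\psi_{-\eps})\xi_{\psi_\eps(g)}$. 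By the multilinear Leibniz rule, the term where the derivative falls on $(c_\eps\omega)_g$ itself retains the $\xi$-slot at $\eps=0$, hence is zero, and the $v_i$-slots contribute nothing since they are constant; only the term differentiating the $\xi$-slot survives, yielding
\[\big([i_\xi,\Lie_\al]\omega\big)_g(v_2,\ldots,v_k) = g^{-1}\cdot\,\omega_g\big([\xi,\al^r]_g, v_2,\ldots,v_k\big),\]
which is precisely $g^{-1}\cdot\omega_g([\xi,\al^r])$.

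The main obstacle is one of rigor rather than of idea: a priori the objects being differentiated are sections of $\eps$-dependent bundles (covectors and vectors at the moving point $\psi_\eps(g)$, with coefficients transported by a moving arrow), so naive differentiation is ill-defined. The device of the twisted pullback $c_\eps$ — together with the observation that it lands in the $\eps$-independent fibre $E_{s(g)}$, and the pull-back of the test vectors to the fixed space $T_g\G$ — is exactly what converts the computation into an honest derivative in one fixed vector space. Once that reduction is in place, the Leibniz bookkeeping and the vanishing of the $\xi$-slot at $\eps=0$ finish the proof.
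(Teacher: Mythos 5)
Your proof is correct and follows essentially the same route as the paper's: both compute the $\eps$-derivative of the transported expression by a chain/product rule, isolating the one term where the derivative falls on the contracted slot, and both reduce to the standard identity $\tfrac{\d}{\d\eps}\big|_{0}(\d\psi_{-\eps})\xi_{\psi_\eps(g)}=[\al^r,\xi]_g$. Your write-up is somewhat more careful than the paper's about where the objects live (the fixed fibre $\Lambda^kT_g^*\G\otimes E_{s(g)}$) and about the remaining $k-1$ arguments, which the paper suppresses, but the underlying argument is the same.
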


Note that this implies (\ref{D}). Indeed, if $\tilde{X}$ is as in the statement, using the lemma for $g= 1_x= x$, $\omega= \theta$, since $D(\alpha)(x)= \Lie_{\al}(\theta)(x)$ and $\theta(\tilde{X}_x)= 0$, 
\[ D_{X}(\al)(x)=[i_{\tilde{X}}, \Lie_{\alpha}](\theta)_x= \theta_{x}([\tilde X,\al^r])= \text{[}\tilde{X},\al^r]_{x} \mod \mathfrak{g}.\]

\begin{proof}[Proof of the lemma]
We apply the chain rule to the composition 
\[\frac{d}{d\eps}\big{|}_{\eps=0}(\varphi^{-\eps}_{\al^r}(g))^{-1}\cdot\omega_{\varphi^{-\eps}_{\al^r}(g)}(\d_g\varphi^{-\eps}_{\al^r}(\xi_g))= f_1\circ f_2,\]
where
\[f_1: I\times s^{-1}(s(g)) \To E_{s(g)}, \qquad f_1(\eps,h)= h^{-1}\cdot\omega_h(\d_{\varphi^{\eps}_{\al^r}(h)}\varphi^{-\eps}_{\al^r}(\xi_{\varphi^{\eps}_{\al^r}(h)})), \]
and
\[f_2: I \To s^{-1}(s(g)), \qquad f_2(\eps) = \varphi^{-\eps}_{\al^r}(g).\]
We obtain,
\begin{equation*}
\begin{aligned}
\frac{d}{d\eps}&\big{|}_{\eps=0}(\varphi^{-\eps}_{\al^r}(g))^{-1}\cdot\omega_{\varphi^{-\eps}_{\al^r}(g)}(\d_g\varphi^{-\eps}_{\al^r}(\xi_g))=\\
&=\frac{d}{d\epsilon}\big{|}_{\eps=0}g^{-1}\cdot\omega_{g}(\d_{\varphi^{\eps}_{\al^r}(g)}\varphi^{-\eps}_{\al^r}(\xi_{\varphi^{\eps}_{\al^r}(g)}))+
\frac{d}{d\eps}\big{|}_{\eps=0}(\varphi^{-\eps}_{\al^r}(g))^{-1}\cdot\omega_{\varphi^{-\eps}_{\al^r}(g)}(\xi_{\varphi^{-\eps}_{\al^r}(g)}),
\end{aligned}
\end{equation*}
or in other words,
\begin{equation*}
-(\Lie_{\al}\omega)(\xi)(g)=g^{-1}\cdot\omega([\al^r,\xi])-\Lie_{\al}(\omega(\xi))(g).
\end{equation*}
i.e. the equation in the statement.
\end{proof}

\subsection{Cartan connections on groupoids}
\label{Cartan connections on groupoids}

The notion of Cartan connections on a Lie groupoid $\G$ arises when looking at the adjoint representation of $\G$ \cite{Abadrep}. It is straightforward to see that the
definition from {\it loc.cit} is equivalent to:

\begin{definition} A Cartan connection on a Lie groupoid $\G$ over $M$ is a multiplicative distribution $\H\subset T\G$
which is complementary to $\textrm{Ker}(\d s)$.
\end{definition}

As for any Ehresmann connection, we will denote the inverse of $(\d s)|_{\H}$ by 
\[ \textrm{hor}: TM\To \H\subset T\G.\]


On the infinitesimal side, we deal with classical connections
\[ \nabla: \X(M)\times \Gamma(A) \To \Gamma(A) \]
on the vector bundle underlying a Lie algebroid $A$. For such a connection, one has the notion of basic curvature
\[ R_{\nabla}^{\textrm{bas}}\in \Omega^2(M, \textrm{Hom}(TM, A))\]
which has appeared in the literature in various contexts (e.g. in \cite{homotopy, Blaom}): 
\[ R_{\nabla}^{\textrm{bas}}(\alpha,\beta)(X):=\nabla_X([\alpha,\beta])-[\nabla_X(\alpha),\beta]-[\alpha,\nabla_X(\beta)]-\nabla_{\nabla_{\beta}^{\textrm{bas}}X}(\alpha)
+\nabla_{\nabla_{\alpha}^{\textrm{bas}}X}(\beta),\]
where $\alpha, \beta $ are sections of $A$ and $X,Y$ are vector
fields on $M$ and
\[\nabla^{\textrm{bas}}_{\alpha}(X)= \rho(\nabla_{X}(\alpha))+ [\rho(\alpha), X].\]

\begin{definition} A Cartan connection on a Lie algebroid $A$ is any connection $\nabla$ on the vector bundle $A$ whose
basic curvature $R_{\nabla}^{\textrm{bas}}$ vanishes.
\end{definition}

Such pairs $(A, \nabla)$ are the Cartan algebroids of \cite{Blaom}. Theorems \ref{t2} and \ref{t3} give:

\begin{theorem} For any Cartan connection $\H$ on a Lie groupoid $\G$ over $M$, 
\begin{eqnarray}\label{nabla-basic}
\nabla: \X(M)\times \Gamma(A) \To \Gamma(A), \ \nabla_X\alpha(x)= \d s([\mathrm{hor}(X),\alpha^r]_x) 
\end{eqnarray}
is a Cartan connection on the Lie algebroid $A$ of $\G$.

When $\G$ is $s$-simply connected, this gives a bijection between Cartan connections $\H$ on $\G$
and Cartan connections $\nabla$ on the algebroid $A$. Moreover, $\H$ is involutive if and only if $\nabla$ is flat. 
\end{theorem}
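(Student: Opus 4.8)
The plan is to obtain the theorem as a direct specialization of Theorems \ref{t2} and \ref{t3} to distributions $\H$ that are complementary to $\textrm{Ker}(\d s)$. The first observation is that this complementarity is equivalent to the vanishing of the symbol space: since the multiplicativity of $\H$ forces $\H^s = \H\cap T^s\G$ to have constant rank, the subbundle $\B = \H^s|_M$ vanishes if and only if $\H\cap T^s\G = 0$, i.e. if and only if $\H$ is complementary to $T^s\G = \textrm{Ker}(\d s)$. So I would first record that, under the bijection of Theorem \ref{t2}, the Cartan connections $\H$ on $\G$ are exactly the multiplicative distributions whose associated pair $(\B, D)$ has $\B = 0$. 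In that case $E = A/\B = A$ and the quotient map $l\colon A\to E$ is the identity, so the Leibniz identity of Definition \ref{relative-Spencer} reduces to $D_X(f\alpha) = f D_X\alpha + \Lie_X(f)\alpha$; together with the $C^{\infty}(M)$-linearity in $X$ this says exactly that $D$, which we denote $\nabla$, is an ordinary linear connection on the vector bundle $A$, with its explicit formula given by (\ref{D}) for the choice $\tilde X = \mathrm{hor}(X)$ (which indeed lies in $\Gamma(\H)$, is $s$-projectable to $X$, and restricts to $u_*(X)$ along $M$), i.e. by (\ref{nabla-basic}).

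The core of the proof is then to identify the remaining Spencer compatibility conditions with the Cartan condition $R^{\textrm{bas}}_{\nabla} = 0$. When $\B = 0$ the condition (\ref{vertical}) is vacuous, so only (\ref{vertical2}) survives. In (\ref{vertical2}) the representation $\nabla$ of $A$ on $E = A$ is the one produced by (\ref{nabla-out-of-D}), which with $l = \mathrm{Id}$ becomes $\nabla_\alpha\alpha' = D_{\rho(\alpha')}\alpha + [\alpha,\alpha']$. I would substitute this into the terms $\nabla_\alpha(D_X\alpha')$ and $\nabla_{\alpha'}(D_X\alpha)$ of (\ref{vertical2}) and, on the other side, expand the terms $\nabla_{\nabla^{\textrm{bas}}_{\alpha}X}\alpha'$ and $\nabla_{\nabla^{\textrm{bas}}_{\alpha'}X}\alpha$ of $R^{\textrm{bas}}_{\nabla}$ using $\nabla^{\textrm{bas}}_{\alpha}X = \rho(\nabla_X\alpha) + [\rho(\alpha),X]$. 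A term-by-term comparison then shows that (\ref{vertical2}) holds for all $X,\alpha,\alpha'$ precisely when $R^{\textrm{bas}}_{\nabla}(\alpha,\alpha') = 0$; that is, the relative Spencer operators with $\B = 0$ are exactly the Cartan connections on $A$. This bookkeeping is the one genuinely computational step, and I expect the main (though routine) difficulty to be keeping the signs and the two different meanings of $\nabla$ straight — the connection $D$ on $A$ versus the $A$-representation on $E = A$.

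Granting this dictionary, the asserted bijection follows by restriction: for $\G$ $s$-simply connected, Theorem \ref{t2} matches multiplicative distributions with pairs $(\B, D)$, and the objects on each side for which $\B = 0$ are, respectively, the Cartan connections $\H$ on $\G$ and the Cartan connections $\nabla$ on $A$. Finally, for the involutivity statement I would invoke Theorem \ref{t3}: since $\B = 0$, the symbol representation $j_{\B}$ automatically vanishes, and the connection $\nabla^E$ that Theorem \ref{t3} attaches to $\H$ on $E = A$ is, by its defining formula $\nabla^E_X[\alpha] = D_X\alpha$, literally our $\nabla$. Hence $\H$ is involutive if and only if $\nabla^E = \nabla$ is flat, which completes the proof.
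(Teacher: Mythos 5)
Your proposal is correct and follows exactly the route the paper intends (the paper simply says ``Theorems \ref{t2} and \ref{t3} give:'' and omits the details): specializing Theorem \ref{t2} to $\B=0$, $E=A$, $l=\mathrm{Id}$, checking that (\ref{vertical}) becomes vacuous and that (\ref{vertical2}) with the representation (\ref{nabla-out-of-D}) expands term-by-term to $R^{\textrm{bas}}_{\nabla}=0$, and then reading off the involutivity statement from Theorem \ref{t3} since $j_{\B}$ is trivially zero and $\nabla^{E}=\nabla$. The bookkeeping you describe does close up as claimed (the term $-[\alpha',\nabla_X\alpha]$ matching $[\nabla_X\alpha,\alpha']$, etc.), so nothing is missing.
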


Under topological condition, the existence of flat Cartan connections implies that the groupoid must come from the action of a Lie group. For instance:

\begin{corol}\label{flat-Cartan}
If $\G$ is an $s$-simply connected Lie groupoid over a compact $1$-connected manifold $M$ and if $\G$ admits a 
flat Cartan connection $\H$, then $\G$ is isomorphic to an action Lie groupoid $H\ltimes M$ associated to
a Lie group $H$ acting on $M$ (as defined in Example \ref{ex-flat-Cartan}).
\end{corol}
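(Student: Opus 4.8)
The plan is to pass to the infinitesimal picture via the preceding theorem, trivialize the resulting flat structure using the hypotheses on $M$, and then integrate. First I would translate the data: a flat Cartan connection $\H$ is a multiplicative distribution complementary to $T^s\G$, so its symbol space is $\B=(\H\cap T^s\G)|_M=0$, whence $E=A/\B=A$, the symbol map $l$ is the identity, and $j_{\B}=0$ automatically. By the preceding theorem, $\H$ corresponds to a Cartan connection $\nabla$ on the Lie algebroid $A$ (so $R_{\nabla}^{\textrm{bas}}=0$), and ``flat'' means that $\H$ is involutive, which by Theorem~\ref{t3} (equivalently, by the last line of that theorem) is the same as $\nabla$ being flat as a linear connection, $R_{\nabla}=0$. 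Thus everything reduces to studying a flat connection $\nabla$ on the vector bundle $A\to M$ whose basic curvature also vanishes.

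Next I would use $1$-connectedness of $M$. A flat connection on a $1$-connected base has trivial holonomy, so parallel transport is path-independent and the space of $\nabla$-parallel sections $\hh:=\Gamma_{\nabla}(A)$ is a vector space of dimension $\rank(A)$ whose values trivialize $A$: evaluation gives an isomorphism $M\times\hh\xrightarrow{\sim}A$ carrying $\nabla$ to the trivial connection and identifying $\Gamma(A)\cong C^{\infty}(M,\hh)$ with $\hh$ the constant sections.

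The conceptual heart is to extract a Lie-algebra-and-action structure from $R_{\nabla}^{\textrm{bas}}=0$. I would show that $\hh$ is closed under the algebroid bracket: for $\alpha,\beta\in\hh$ one has $\nabla_X\alpha=\nabla_X\beta=0$, so $\nabla_{\alpha}^{\textrm{bas}}X=\rho(\nabla_X\alpha)+[\rho(\alpha),X]=[\rho(\alpha),X]$ and likewise for $\beta$; plugging these into $R_{\nabla}^{\textrm{bas}}(\alpha,\beta)(X)=0$ and using that $\nabla_{(-)}\alpha$ and $\nabla_{(-)}\beta$ vanish collapses every term except one, giving
\[ R_{\nabla}^{\textrm{bas}}(\alpha,\beta)(X)=\nabla_X[\alpha,\beta]=0. \]
Hence $[\alpha,\beta]\in\hh$ and $\hh$ is a finite-dimensional Lie algebra. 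Since the anchor of any Lie algebroid is bracket-preserving, $\alpha\mapsto\rho(\alpha)$ is a Lie algebra homomorphism $\hh\to\X(M)$, i.e.\ an infinitesimal action; comparing brackets on constant sections and invoking the Leibniz rule then identifies $A$ with the action algebroid $\hh\ltimes M$ of Example~\ref{ex-flat-Cartan}.

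Finally I would integrate. Compactness of $M$ guarantees that each vector field $\rho(\alpha)$ is complete, so by Palais' integrability theorem the infinitesimal action integrates to an action on $M$ of the simply connected Lie group $H$ with $\mathrm{Lie}(H)=\hh$. The action groupoid $H\ltimes M$ then has Lie algebroid $\hh\ltimes M\cong A$ and is $s$-simply connected, since its $s$-fibers are copies of the simply connected $H$. By uniqueness of the $s$-simply connected integration of $A$, we conclude $\G\cong H\ltimes M$. I expect the main obstacle to be the third step---genuinely isolating the Lie algebra and the action out of the vanishing basic curvature---together with the integration step, which is precisely where both topological hypotheses are used: $1$-connectedness to trivialize $\nabla$ and produce $\hh$, and compactness to guarantee completeness and hence integrability of the action.
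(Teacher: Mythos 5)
Your proposal is correct and follows essentially the same route as the paper's proof: trivialize $A$ by the flat sections of $\nabla$ using $1$-connectedness of $M$, observe that the vanishing of $R_{\nabla}^{\textrm{bas}}$ forces the bracket of parallel sections to be parallel (your explicit computation is exactly the detail the paper leaves implicit), integrate the resulting infinitesimal action using compactness, and conclude by uniqueness of the $s$-simply connected integration. The only difference is that you spell out the intermediate steps more fully than the paper does.
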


\begin{proof} The flatness of the associated $\nabla$ and the $1$-connectedness of $M$ implies that $A$ is a trivial
bundle: $A= \mathfrak{h}\times M$ for some vector space $\mathfrak{h}$ and the constant sections correspond to flat sections. 
The vanishing of the basic curvature implies that the bracket of constant sections is again constant; hence one has an 
induced Lie algebra structure on $\mathfrak{h}$; the anchor of $A$ becomes an infinitesimal action. Due to the
compactness of $M$, one can integrate this action to an action of the $1$-connected Lie group $H$ whose Lie algebra
is $\mathfrak{h}$. Then $H\ltimes M$ and $\G$ are two Lie groupoids with $1$-connected $s$-fibers and with the same Lie algebroid; hence
they are isomorphic. 
\end{proof}

\subsection{1-cocycles and the van Est map}
\label{1-cocyles and relations to the van Est map}

Another interesting case of the main theorem is $k= 0$, when we recover the van Est map relating differentiable and algebroid cohomology \cite{WeinsteinXu, Crainic:Van}, in degree $1$. Since this case will be used later on and also in order to fix the terminology, we discuss it separately here. In particular, we will provide a simple direct argument, based on  Lie's II theorem \cite{MoerdijkMrcun, Mackenzie:General} which says that, if $\G$ is $s$-simply connected and $\H$ is any Lie groupoid , then for any Lie algebroid morphism $\varphi: A(\G) \to A(\H)$, there exists a unique  Lie groupoid morphisms $\Phi: \G \to \H$ such that $\varphi = \d \Phi |_{A(\G)}$.

Let $\G$ be a Lie groupoid over $M$, and $E$ a representation of $\G$. We will denote by $\G^{(p)}$ the space of strings of $p$ composable arrows on $\G$, and by $t: \G^{(p)} \to M$ the map which associates to $(g_1, \ldots, g_p)$ the point $t(g_1)$. 
A differentiable $p$-cochain on $\G$ with values in $E$ is a smooth section $c: \G^{(p)} \to t^{\ast}E$. We denote the space of all such cochains by  $C^p(\G,E)$. The differential $\delta: C^p(\G,E) \to C^{p+1}(\G,E)$ is
\[\begin{aligned}
\delta c (g_1, \ldots, g_{p+1}&) = g_1c(g_2, \ldots, g_{p+1}) +\\
&+\sum_{i=1}^p(-1)^i c(g_1, \ldots , g_ig_{i+1}, \ldots ,g_{p+1}) +(-1)^{p+1}c(g_1, \ldots, c_p),
\end{aligned}\]
For each $p$ we consider the space of $p$-cocycles
\[Z^p(G,E) = \ker (\delta: C^p(\G,E) \to C^{p+1}(\G,E)).\]
We recognize the multiplicative $c\in \Omega^0(\G, E)$ as the elements of $Z^1(G,E)$. 

At the infinitesimal side, given a representation $\nabla$ of $A$ on $E$, one defines the de Rham cohomology of $A$ with coefficients in $E$ as the cohomology of the complex $(C^{\ast}(A,E), d)$, where $C^{p}(A,E) = \wedge^pA^{\ast}\otimes E$ and 
\[\begin{aligned}
d\omega(\al_0, \ldots, \al_p) =  \sum_{i}&(-1)^i\nabla_{\al_i}\omega(\al_0, \ldots, \widehat{\al_i},\ldots, \al_p) +\\
&+\sum_{i<j}(-1)^{i+j}\omega([\al_i, \al_j], \al_0, \ldots, \widehat{\al_i}, \ldots, \widehat{\al_j}, \ldots, \al_p).
\end{aligned}\]
As above, the space of $p$-cocycles is denoted by $Z^p(A ,E)$ and 
we recognize the $E$-valued $0$-Spencer operators as $1$-cocycles on $\A$ with values in $E$.

The van Est map is a map of cochain complexes
\[\ve: C^{\ast}(\G, E) \To C^{\ast}(A,E),\]
which induces isomorphism in cohomology under certain connectedness conditions on the $s$-fibers \cite{WeinsteinXu, Crainic:Van}. We will concentrate on the degree $1$ cochains. In this case, for $c\in C^{1}(\G, E)$, $\ve(c)$ is given by: 
\[\vartheta_x(c)(\al) = \frac{\d}{\d \eps}\big{|}_{\eps = 0}g_{\eps}^{-1}c(g_{\eps}),\]
where $\al \in A_x$, and $g_{\eps}$ is any curve in $s^{-1}(x)$ such that $g_0 = 1_x$, $\frac{\d}{\d \eps}\big{|}_{\eps = 0}g_{\eps} = \al$. 
Using $g_{\epsilon}= \phi^{\epsilon}_{\alpha}(x)$, we recognize our formula for the Spencer operator from Theorem \ref{t1}. Hence our main theorem gives:


\begin{prop}\label{prop: Van Est}
If $\G$ is $s$-simply connected, then the van Est map induces an isomorphism
\[\ve: Z^1(\G, E) \To Z^1(A,E),\]
where $Z^1(A,E)$ denotes the closed elements of $C^1(A,E)$.
\end{prop}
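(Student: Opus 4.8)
The plan is to realise both cocycle spaces as sets of \emph{splittings} of a semidirect-product projection and then to invoke Lie's II theorem. Using the representation, form the semidirect product groupoid $\G\ltimes E\rightrightarrows M$, whose arrows over $g\colon x\to y$ are the pairs $(g,e)$ with $e\in E_y$ and whose multiplication is
\[(g_1,e_1)\cdot(g_2,e_2)=(g_1g_2,\ e_1+g_1\cdot e_2).\]
Associativity is precisely the representation identity, so $\G\ltimes E$ is a Lie groupoid; its Lie algebroid is the semidirect product $A\ltimes E$ (underlying bundle $A\oplus E$, anchor that of $A$, bracket $[(\al,e_1),(\be,e_2)]=([\al,\be],\,\nabla_\al e_2-\nabla_\be e_1)$), the flatness of $\nabla$ guaranteeing the Jacobi identity. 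The projection $\pr\colon\G\ltimes E\to\G$ is a groupoid morphism over $M$, and $\d\pr$ is the projection $A\ltimes E\to A$.

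First I would check that $c\mapsto\sigma_c$, with $\sigma_c(g):=(g,c(g))$, identifies $Z^1(\G,E)$ with the groupoid morphisms $\sigma\colon\G\to\G\ltimes E$ satisfying $\pr\circ\sigma=\mathrm{id}_\G$. Indeed $\sigma_c(g_1)\cdot\sigma_c(g_2)=(g_1g_2,\ c(g_1)+g_1\cdot c(g_2))$, so $\sigma_c$ is multiplicative exactly when $c(g_1g_2)=c(g_1)+g_1\cdot c(g_2)$, i.e. when $\delta c=0$. The same bookkeeping on the infinitesimal side identifies $Z^1(A,E)$ with the splittings $\phi\colon A\to A\ltimes E$ of $\d\pr$: writing $\phi(\al)=(\al,\omega(\al))$ for a bundle map $\omega\colon A\to E$, preservation of anchors is automatic and preservation of brackets reduces to $\omega([\al,\be])=\nabla_\al\omega(\be)-\nabla_\be\omega(\al)$, which is exactly $d\omega=0$ in $C^1(A,E)$.

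Next I would apply Lie's II theorem. Since $\G$ is $s$-simply connected, differentiation is a bijection between groupoid morphisms $\G\to\G\ltimes E$ and Lie algebroid morphisms $A\to A\ltimes E$. This bijection respects the splitting constraint: if $\sigma$ differentiates to $\phi$, then $\pr\circ\sigma$ differentiates to $\d\pr\circ\phi$; since two morphisms $\G\to\G$ out of an $s$-simply connected groupoid that share the same differential coincide (again Lie II), we get $\pr\circ\sigma=\mathrm{id}_\G$ if and only if $\d\pr\circ\phi=\mathrm{id}_A$. Hence differentiation restricts to a bijection between the two splitting sets, that is, between $Z^1(\G,E)$ and $Z^1(A,E)$. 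Finally I would identify this bijection with $\ve$: differentiating $\sigma_c(g_\eps)=(g_\eps,c(g_\eps))$ along a curve $g_\eps$ in $s^{-1}(x)$ with $g_0=1_x$, $\frac{\d}{\d\eps}\big|_{\eps=0}g_\eps=\al$, the $A$-component is $\al$, while the $E$-component, read off in the trivialisation $A\ltimes E=A\oplus E$ (equivalently, after right translation to the unit in $\G\ltimes E$), is $\frac{\d}{\d\eps}\big|_{\eps=0}g_\eps^{-1}\cdot c(g_\eps)=\vartheta_x(c)(\al)$.

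The routine part consists of the semidirect-product formulas; the one point that demands care is this last identification, namely that the $E$-component of $\d\sigma_c(\al)$ is the \emph{action-trivialised} derivative $\frac{\d}{\d\eps}\big|_{0}\,g_\eps^{-1}\cdot c(g_\eps)$. This is where it matters that $c(g_\eps)\in E_{t(g_\eps)}$ lives in varying fibres, so that its intrinsic differentiation at the unit is performed by the groupoid action — which is exactly what produces the van Est formula. As a cross-check, the whole statement is the $k=0$ instance of Theorem \ref{t1}: with no entries $X_i$, the formula \eqref{eq: explicit formula} for $D_\theta$ specialises, along $g_\eps=\phi^\eps_\al(x)$, to $\vartheta$.
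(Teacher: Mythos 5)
Your proposal is correct and follows essentially the same route as the paper: form the semidirect product $\G\ltimes E$, identify $1$-cocycles on both sides with morphisms (equivalently, splittings) into the semidirect products, and conclude by Lie's II theorem, finally matching the resulting bijection with the van Est formula. The only difference is cosmetic — you phrase the identification via splittings of the projection and spell out why Lie II respects that constraint, whereas the paper works directly with morphisms of the form $(\mathrm{Id},c)$ — so there is nothing to add.
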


\begin{proof}
Consider the semi-direct product $\G\ltimes E \tto M$, whose space of arrows consists of pairs 
$(g,v) \in \G \times E$ with $t(g) = \pi(v)$ and 
\[ s(g,v) = s(g), \quad t(g,v) = t(g) = \pi(v), \quad (g,v)(h,w) = (gh, v+gw) .\] 
The fact that $c \in Z^1(\G,E)$ satisfies the cocycle condition is equivalent to 
\[ \tilde{c}:= (\textrm{Id}, c): \G\To \G\ltimes E\]
being a morphism of groupoids. The Lie algebroid of $\G\ltimes E$ is $A\ltimes E = A\oplus E$ with 
\[\rho(\al,s) = \rho(\al), \text{ and } [(\al,s),(\al',s')] = ([\al,\al'], \nabla_{\al}s' - \nabla_{\al'}s).\]
As before, $\omega \in C^1(A,E)$ is a cocycle if and only if  $\tilde{\omega}:= (\textrm{Id}, \omega) : A\To A \ltimes E$
is a Lie algebroid morphism. It is easy to see that, for $c \in Z^1(\G,E)$, the Lie functor applied to $\tilde{c}$ is $\widetilde{\ve(c)}$ hence the result follows from Lie II.
\end{proof}


\begin{remark}\label{along-s}\rm \ Sometimes it is more natural to consider cochains along $s$, i.e. sections of the pull-back of $E$ via 
$s: \G^{(p)} \to M$, $(g_1, \ldots, g_p)\mapsto s(g_p)$. In degree $1$, one deals with $c\in \Gamma(\G, s^{\ast}E)$ and the 
cocycle condition becomes
\[ c(gh)= h^{-1}\cdot c(g)+ c(h) .\]
Of course, one can just pass to cocycles in the previous sense by considering 
\[ \overline{c}(g)= g^{-1}\cdot c(g).\]
Note that the associated algebroid cocycle is simply $\ve(\overline{c})(\alpha_x)= (\d c)_x(\alpha_x)$.
\end{remark}

\begin{remark}[cocycles as representations]\label{remark-cocycles}\rm 
Yet another interpretation of $1$-cocycles is obtained when $E= \mathbb{R}$ is the trivial representation of $\G$ (and of $A$).
On the groupoid side, any 1-cocycle $c\in C^{1}(\G)$ induces a representation, denoted $\mathbb{R}_c$, of $\G$ on the trivial line bundle:
\[ g\cdot t:= e^{c(g)}t .\]
When the $s$-fibers of $\G$ are connected, it is not difficult to see that this gives a 1-1 correspondence. Similarly, one has a 
1-1 correspondence between $1$-cocycles on $A$ and structures of representations of $A$ on the trivial line bundle; given $a\in Z^1(A)$,
the corresponding representation, denoted $\mathbb{R}_{a}$,  is determined by 
\[ \nabla_{\al}(1)= a(\al) .\]

It is clear that, in this case, the van Est map (and Proposition \ref{prop: Van Est}) becomes the Lie functor between representations of $\G$ and those of $A$.
\end{remark}

For later use, we give the following: 

\begin{prop}\label{prop: appendix}
Let $v \in T_g\G$ be a vector tangent to the $t$-fiber of $\G$, and let $c \in Z^1(\G,E)$ be a cocycle. Then
\[(\d c)_g(v) = g\cdot (\d c)_{s(g)}(\d_gL_{g^{-1}}v).\]
\end{prop}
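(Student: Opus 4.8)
The plan is to reduce the statement to the defining cocycle identity and to a factorization of the curve through $g$ by left translation. Recall that, since $\delta c = 0$ in degree one, the cochain $c$ satisfies
\[ c(g_1g_2) = c(g_1) + g_1\cdot c(g_2) \qquad \text{for all composable } (g_1,g_2). \]
Throughout I read $(\d c)_{s(g)}$ as the differential of $c$ at the unit $1_{s(g)}$; this is meaningful because $c$ restricted to the $t$-fiber $t^{-1}(s(g))$ takes values in the single vector space $E_{s(g)}$, and its differential there is a linear map into $E_{s(g)}$.

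First I would choose a smooth curve $g_{\eps}$ in $\G$ with $g_0 = g$, $\dot g_0 = v$ and, crucially, $t(g_{\eps}) \equiv t(g)$; such a curve exists precisely because $v$ is tangent to the $t$-fiber. Setting $h_{\eps} := L_{g^{-1}}(g_{\eps}) = g^{-1}g_{\eps}$, I obtain a smooth curve with $h_0 = 1_{s(g)}$ and $\dot h_0 = (\d_g L_{g^{-1}})(v) =: w$. Moreover $t(h_{\eps}) = t(g^{-1}) = s(g)$ for all $\eps$, so $h_{\eps}$ stays in the $t$-fiber over $s(g)$, the vector $w$ is tangent to that fiber at the unit, and in particular $c(h_{\eps}) \in E_{s(g)}$ lies in one fixed vector space.

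Next I would apply the cocycle identity to the composable pair $(g, h_{\eps})$ — composable since $s(g) = t(h_{\eps})$ — to get $c(g_{\eps}) = c(g\cdot h_{\eps}) = c(g) + g\cdot c(h_{\eps})$, and then differentiate at $\eps = 0$. The term $c(g)$ is constant, and in $g\cdot c(h_{\eps})$ the action $g\cdot(-)\colon E_{s(g)} \to E_{t(g)}$ is a single fixed linear isomorphism (the source fiber $E_{s(g)}$ does not move), so it commutes with $\frac{\d}{\d\eps}\big|_{\eps=0}$. This yields
\[ (\d c)_g(v) = g\cdot \tfrac{\d}{\d\eps}\big|_{\eps=0} c(h_{\eps}) = g\cdot (\d c)_{s(g)}\big(\d_g L_{g^{-1}} v\big), \]
which is exactly the asserted formula.

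The only real subtlety — the step I would watch most carefully — is the fiber bookkeeping that licenses pulling the action $g\cdot(-)$ outside the derivative. This works solely because $v$ is $t$-vertical: that is what forces $t(h_{\eps})$ to be the constant point $s(g)$, so that $c(h_{\eps})$ varies inside the single fiber $E_{s(g)}$ and $g$ acts through one fixed linear map. Had $v$ possessed a $t$-component, the target fiber $E_{t(h_{\eps})}$ of $c(h_{\eps})$ would itself move and the action could not be factored out of $\frac{\d}{\d\eps}\big|_{\eps=0}$; this is why the hypothesis that $v$ be tangent to the $t$-fiber is essential rather than cosmetic.
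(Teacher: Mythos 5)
Your proof is correct and is essentially the paper's own argument: the paper takes a curve $\gamma(\eps)$ in the $t$-fiber through $g$ with velocity $v$, applies the cocycle identity in the form $c(g^{-1}\gamma(\eps)) = c(g^{-1}) + g^{-1}\cdot c(\gamma(\eps))$, and differentiates at $\eps=0$ — the same computation as yours up to a trivial rearrangement (you multiply by $g$ on the left of the translated curve rather than by $g^{-1}$ on the left of the original one). Your explicit remark about why the $t$-verticality of $v$ keeps $c(h_\eps)$ in a single fiber, so that the fixed linear map $g\cdot(-)$ commutes with the derivative, is exactly the point the paper leaves implicit.
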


\begin{proof}
Let $\gamma: I \to \G$ be a curve in the $t$-fiber through $g$ whose velocity at $\eps = 0$ is $v$. Since $c$ is a cocycle, it follows that
\[c(g^{-1}\cdot\gamma(\eps)) = g^{-1}\cdot c(\gamma(\eps)) + c(g^{-1}),\]
for all $\eps \in I$. Finally, one differentiates w.r.t. $\eps$.
\end{proof}

\subsection{Trivial Coefficients}
\label{sec: trivial coefficients}

Another interesting case of the main theorem is when $E$ is the trivial representation. This case was well studied
because of its relevance to Poisson geometry. Our Theorem \ref{t1} recovers the most general results in this context. 
The key remark is that, when $E= \mathbb{R}$ with the trivial action, any bundle map $l:A \to \wedge^{k-1}T^*M$ is canonically the symbol of an $E$-Spencer operator, namely $\al\mapsto d(l(\al))$. We see that any $E$-Spencer operator can be decomposed as
\[ D(\al)= \nu(\al)+ d(l(\al))\]
where, this time, $\nu$ is tensorial. Rewriting everything in terms of $\nu$ and $\al$ we obtain the result of \cite{CrainicArias, BursztynCabrera}:

\begin{prop}\label{prop: trivial coefficients}
Let $\G$ be an $s$-simply connected Lie groupoid with Lie algebroid $A$. Then there is a one-to-one correspondence between multiplicative forms $\theta \in \Omega^k(\G)$ and pairs $(\nu, l)$ consisting of vector bundle maps 
\begin{equation}\label{1}
\left\{\begin{aligned}
\nu:A&\To \wedge^kT^*M\\
l:A&\To \wedge^{k-1}T^*M
\end{aligned}\right.
\end{equation}
satisfying 
\begin{equation}\label{eq: compatibility-tc}\left\{
\begin{aligned}
\nu([\al,\be] ) &= \Lie_{\rho(\al)}\nu(\be) - i_{\rho(\be)}d\nu(\al)\\
i_{\rho(\be)}\nu(\al) &= \Lie_{\rho(\al)}l(\be)  - i_{\rho(\be)}dl(\al)- l([\al,\be])\\
i_{\rho(\al)}l(\be) &= -i_{\rho(\be)}l(\al),
\end{aligned}\right.
\end{equation}
for all $\al,\be \in \Gamma(A)$. The correspondence $\theta\mapsto (\nu_{\theta}, l_{\theta})$ is given explicitly by
\[  \nu_{\theta}(\al)=u^{\ast}(i_{\al}d\theta), \ \  l_{\theta}(\al)=u^{\ast}(i_\al\theta).\]
\end{prop}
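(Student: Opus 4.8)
The plan is to deduce this statement directly from our main Theorem \ref{t1} applied to the trivial representation $E = \mathbb{R}$ (with $\nabla = \Lie_{\rho(\cdot)}$), by rewriting the data of a $\mathbb{R}$-valued $k$-Spencer operator $(D, l)$ in the equivalent bookkeeping $(\nu, l)$ and then translating the three Spencer axioms (\ref{eq: compatibility-1})--(\ref{eq: compatibility-3}) into the three conditions (\ref{eq: compatibility-tc}). First I would record the key observation already flagged in the text: when $E = \mathbb{R}$, the bundle map $\al \mapsto d(l(\al))$ is itself the symbol map of a genuine $\mathbb{R}$-valued Spencer operator, because $d(fl(\al)) = df\wedge l(\al) + f\,d(l(\al))$ shows it satisfies the Leibniz identity (\ref{eq: Leibniz identity}) relative to $l$. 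Consequently, for any Spencer operator $D$ with symbol $l$, the difference
\[
\nu(\al) := D(\al) - d(l(\al))
\]
is $C^\infty(M)$-linear, i.e.\ genuinely tensorial, and so $\nu: A \to \wedge^k T^*M$ is a vector bundle map. This sets up the bijection $D \leftrightarrow (\nu, l)$ underlying the whole statement; combined with the $s$-simply-connected hypothesis and Theorem \ref{t1}, it already gives the one-to-one correspondence between multiplicative $\theta \in \Omega^k(\G)$ and pairs $(\nu, l)$ satisfying whatever equations the Spencer axioms become.

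The bulk of the work is then the translation of the three axioms. The third condition (\ref{eq: compatibility-3}), $i_{\rho(\al)}l(\be) = -i_{\rho(\be)}l(\al)$, involves only $l$ and transcribes verbatim into the third equation of (\ref{eq: compatibility-tc}); nothing needs to be done. For the first axiom (\ref{eq: compatibility-1}), $D([\al,\be]) = \Lie_\al D(\be) - \Lie_\be D(\al)$, I would substitute $D(\cdot) = \nu(\cdot) + d(l(\cdot))$ on both sides. Here I use that on the trivial representation $\Lie_\al$ acts as $\Lie_{\rho(\al)}$, that $\Lie_{\rho(\al)}$ commutes with $d$ (Cartan's formula), and the Cartan magic formula $\Lie_{\rho(\al)} = d\,i_{\rho(\al)} + i_{\rho(\al)} d$. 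After cancelling the exact terms $d(l([\al,\be]))$ against $d(\Lie_\al l(\be) - \Lie_\be l(\al))$ — which cancel precisely by axiom (\ref{eq: compatibility-2}) applied inside $d$ — the purely tensorial remainder should collapse to the first equation $\nu([\al,\be]) = \Lie_{\rho(\al)}\nu(\be) - i_{\rho(\be)}d\nu(\al)$. The second axiom (\ref{eq: compatibility-2}), $l([\al,\be]) = \Lie_\al l(\be) - i_{\rho(\be)} D(\al)$, becomes, after inserting $D(\al) = \nu(\al) + d(l(\al))$ and using $i_{\rho(\be)}d(l(\al)) = \Lie_{\rho(\be)}l(\al) - d\,i_{\rho(\be)}l(\al)$, the second equation of (\ref{eq: compatibility-tc}); one should note that here axiom (\ref{eq: compatibility-3}) is needed to handle the $d\,i_{\rho(\be)}l(\al)$ term consistently.

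Finally, I would verify the explicit formulas $\nu_\theta(\al) = u^*(i_\al d\theta)$ and $l_\theta(\al) = u^*(i_\al\theta)$. The expression for $l_\theta$ is immediate from Theorem \ref{t1}, where $l_\theta(\al) = u^*(i_\al\theta)$ by definition. For $\nu_\theta$, starting from $\nu_\theta = D_\theta - d(l_\theta)$ and the integral formula (\ref{eq: explicit formula}) for $D_\theta$, I would differentiate using the flow $\phi^\eps_\al$ and the relation between the Lie derivative along $\al^r$ and $d\theta$; the expected outcome is that the flow-derivative of $\theta$ minus $d\,u^*(i_\al\theta)$ restricts at units to $u^*(i_\al d\theta)$, via Cartan's magic formula applied to the right-invariant vector field $\al^r$ and multiplicativity. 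The main obstacle I anticipate is exactly this last verification: keeping track of the restriction to units, the right-translation factors hidden in the flow, and the interplay between the groupoid differential $d\theta$ on $\G$ and the base differential $d$ on $M$, so that the tensorial part genuinely reduces to $u^*(i_\al d\theta)$ rather than picking up spurious correction terms. The axiom-to-equation translation, by contrast, is routine Cartan calculus once the decomposition $D = \nu + d\circ l$ is in place.
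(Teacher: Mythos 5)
Your proposal is correct and follows essentially the same route as the paper: the paper's argument is precisely the observation that for trivial coefficients $\al\mapsto d(l(\al))$ is itself a symbol-compatible Spencer operator, so that $\nu:=D-d\circ l$ is tensorial, after which the three Spencer axioms are rewritten in terms of $(\nu,l)$ and the explicit formulas follow from \eqref{eq: explicit formula} via Cartan's magic formula at units. The only cosmetic difference is that your handling of axiom \eqref{eq: compatibility-2} is a direct substitution and does not actually require invoking \eqref{eq: compatibility-3}.
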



For $\phi\in \Omega^{k+1}(M)$,
the cohomologically trivial form $\delta(\phi)= s^{\ast}\phi- t^{\ast}\phi$ gives (see Examples \ref{ex: form on base} and \ref{ex: form on base-2})
\[ \nu_{\delta(\phi)}(\al)=  -i_{\rho(\al)}(d\phi) ,\qquad   l_{\delta(\phi)}(\phi)= -i_{\rho(\al)}(\phi),\]
hence one obtains an infinitesimal characterization for cohomological triviality.

Note that in the case of trivial coefficients it makes sense to talk about the DeRham differential of a multiplicative form (itself multiplicative).
From the last formulas in the proposition, we have:
\[ \nu_{d\theta}= 0,\qquad l_{d\theta}= \nu_{\theta},\]
hence one immediately obtains infinitesimal characterizations of multiplicative forms which are closed. More generally, given $\phi\in \Omega^{k+1}(M)$
closed, one says that $\theta\in \Omega^k(\G)$ is $\phi$-closed if $d\theta= s^{\ast}\phi- t^{\ast}\phi$. One obtains for instance the following, which when $k= 2$
gives the main result of \cite{BCWZ}.

\begin{corol}\label{cor-B-field}
Assume that $\G$ is $s$-simply connected, and that $\phi\in \Omega^{k+1}(M)$ is closed. Then there is a bijection between $\phi$-closed, multiplicative $\theta \in \Omega^k(\G)$ and
\[l: A \To \Lambda^{k-1}T^{\ast}M\]
which are vector bundle maps satisfying
\[\left\{\begin{aligned}
l([\al,\be]) &= \Lie_{\rho(\al)}l(\be) - i_{\rho(\be)}\d l (\al)+ i_{\rho(\al)\wedge \rho(\be)}(\phi) \\
i_{\rho(\al)} l(\be) &= -i_{\rho(\be)}l(\al).
\end{aligned}\right.\]
\end{corol}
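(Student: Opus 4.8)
The plan is to obtain this as a specialization of Proposition \ref{prop: trivial coefficients}, the point being that imposing $\phi$-closedness rigidifies the component $\nu$ and leaves $l$ as the only free datum. As a first step I would invoke that proposition: since $\G$ is $s$-simply connected, multiplicative forms $\theta\in\Omega^k(\G)$ are in bijection with pairs $(\nu,l)$ of vector bundle maps satisfying the three compatibility equations in (\ref{eq: compatibility-tc}), via $\nu_\theta(\al)=u^\ast(i_\al d\theta)$ and $l_\theta(\al)=u^\ast(i_\al\theta)$. The task is then to cut out, among such pairs, those coming from $\phi$-closed forms, and to verify that the three equations collapse to the two in the statement.

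Second I would translate the relation $d\theta=s^\ast\phi-t^\ast\phi=\delta(\phi)$ through the same correspondence applied in degree $k+1$. Both $d\theta$ and $\delta(\phi)$ are multiplicative $(k+1)$-forms, so by the injectivity in Proposition \ref{prop: trivial coefficients} they coincide precisely when their associated pairs do. Using the identities $\nu_{d\theta}=0$ and $l_{d\theta}=\nu_\theta$ recorded above, together with $\nu_{\delta(\phi)}(\al)=-i_{\rho(\al)}(d\phi)$ and $l_{\delta(\phi)}(\al)=-i_{\rho(\al)}\phi$, and the hypothesis $d\phi=0$, the $\nu$-components agree automatically (both vanish), so the equality of the $l$-components becomes the single condition
\[ \nu(\al)=-i_{\rho(\al)}\phi,\qquad \al\in\Gamma(A).\]
Hence $\phi$-closed multiplicative forms correspond exactly to the pairs of the shape $(-i_{\rho(\cdot)}\phi,\,l)$, and $\theta\mapsto l_\theta$ becomes the claimed assignment, with inverse sending $l$ to the form attached to $(-i_{\rho(\cdot)}\phi,\,l)$.

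Finally I would substitute $\nu(\al)=-i_{\rho(\al)}\phi$ into the three equations of (\ref{eq: compatibility-tc}). The third equation is untouched and gives the second equation of the corollary. Solving the second of (\ref{eq: compatibility-tc}) for $l([\al,\be])$ and rewriting the cross term $-i_{\rho(\be)}\nu(\al)=-i_{\rho(\be)}(-i_{\rho(\al)}\phi)$ as $+i_{\rho(\al)\wedge\rho(\be)}\phi$ (matching the convention $i_{X\wedge Y}=i_Y i_X$) yields the first equation of the corollary. The one genuine thing to check, and where the main (though still routine) work sits, is that the first of (\ref{eq: compatibility-tc}) becomes vacuous under this substitution. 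This is a pure Cartan-calculus identity: expanding both sides with $\Lie_{\rho(\al)}\phi=d\,i_{\rho(\al)}\phi$ (Cartan's formula together with $d\phi=0$), the commutation rule $[\Lie_X,i_Y]=i_{[X,Y]}$, and the anchor relation $\rho([\al,\be])=[\rho(\al),\rho(\be)]$, both sides reduce to $-i_{[\rho(\al),\rho(\be)]}\phi$. This completes the reduction, and, for $k=2$, recovers the main result of \cite{BCWZ}.
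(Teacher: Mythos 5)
Your proposal is correct and follows exactly the route the paper intends: it deduces the corollary from Proposition \ref{prop: trivial coefficients} together with the recorded formulas $\nu_{d\theta}=0$, $l_{d\theta}=\nu_{\theta}$, $\nu_{\delta(\phi)}(\al)=-i_{\rho(\al)}(d\phi)$ and $l_{\delta(\phi)}(\al)=-i_{\rho(\al)}\phi$, characterizing $\phi$-closedness as $\nu_{\theta}(\al)=-i_{\rho(\al)}\phi$ and substituting into (\ref{eq: compatibility-tc}). The paper leaves this as immediate; your only added content is the (correct) Cartan-calculus check that the first compatibility equation becomes vacuous, which is a worthwhile detail to spell out.
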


\begin{example}\rm \  To make the discussion on Jacobi manifolds of the next section more transparent, we briefly recall here the Poisson case. 
Let $(M, \pi)$ be a Poisson manifold and let $A= T^{\ast}M$ be endowed with the induced algebroid structure (see e.g. \cite{Zung}),
which is assumed to come from an $s$-simply connected Lie groupoid $\Sigma$. 
Then previous proposition can be applied to $k= 2$, $\phi= 0$ and $l= \textrm{Id}_{T^{\ast}M}$; this gives rise to $\omega\in \Omega^2(\Sigma)$ 
closed and multiplicative, and also non-degenerate (because $l$ is an isomorphism), making $\Sigma$ into a symplectic groupoid. 
\end{example}

\subsection{Contact groupoids}
\label{Contact groupoids}

In analogy with symplectic groupoids and Poisson geometry, contact groupoids are the global counterpart of Jacobi manifolds. Although this has been known 
for a while (see e.g. \cite{Jacobi} and the references therein), the existing approaches have been
rather in-direct (by using ``Poissonization'', applying the similar results from Poisson geometry, then passing to quotients). What happens is that contact groupoids
require the use of non-trivial coefficients; therefore, our main theorem now allows for a direct approach.  Furthermore, using the slightly 
more general setting of Kirillov's local Lie algebras, the approach becomes less computational and more conceptual.


The difference between Jacobi manifolds and local Lie algebras, or the difference between their global counterparts, is completely analogous the
difference between the two related but non-equivalent notions of contact manifolds that one finds in the literature. Here we follow the terminology of \cite{contact}.

\begin{definition} Let $M$ be a manifold. 
\begin{itemize}
\item A contact structure on $M$ is a contact form $\theta$, i.e. a regular $1$-form $\theta\in \Omega^1(M)$ with the property that the restriction of $d\theta$ to the distribution $\H_{\theta}= \textrm{Ker}(\theta)$ is
pointwise non-degenerate. 

\item A contact structure in the wide sense on $M$ is a contact hyperfield, i.e. a codimension one distribution $\H\subset TM$ 
which is maximally non-integrable. 
\end{itemize}
\end{definition}

Here maximal non-integrability can be understood globally as follows. First, $\H$ induces a line bundle
\[ L= TM/\H .\]
Then, the Lie bracket modulo $\H$ induces a ($C^{\infty}(M)$-)bilinear map
\begin{equation}\label{I-H} 
I: \H\times \H \To L, \ \ (X, Y)\mapsto [X, Y]\ \textrm{mod}\ \H
\end{equation}
and the maximal non-integrability of $\H$ means that $I$ is non-degenerate. The contact case is obtained when $L$ is the trivial line bundle. 
Passing to groupoids:

\begin{definition} Let $\Sigma$ be a Lie groupoid over $M$. 
\begin{itemize}
\item A contact structure on the groupoid $\Sigma$ is a pair $(\theta, r)$ consisting of a smooth map $r: \Sigma\to \mathbb{R}$ (the Reeb cocycle) and a contact form $\theta\in \Omega^1(\Sigma)$
which is $r$-multiplicative in the sense that 
\[ m^{\ast}\theta= \pr_{2}^{\ast}(e^{-r})\pr_{1}^{\ast}\theta+ \pr_{2}^{\ast}\theta .\]

\item A contact structure in the wide sense on the groupoid is a contact hyperfield $\H$ on $\Sigma$ which is multiplicative.
\end{itemize}
\end{definition}

Regarding the first notion note that the equation above implies that, indeed, $r$ is a $1$-cocycle; hence it induces a representation $\mathbb{R}_r$ of $\Sigma$ (cf. Remark \ref{remark-cocycles}). Also, one has the following immediate but important remark,
which will allow us to reconstruct $\theta$ from associated Spencer operators:

\begin{lemma} 
The $r$-multiplicativity of $\theta$ is equivalent to the fact that $e^{r}\theta\in \Omega^1(\Sigma)$ is multiplicative as a form with values in the representation $\mathbb{R}_r$.
\end{lemma}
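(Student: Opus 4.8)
The plan is to unwind both sides of the multiplicativity equation for the explicit form $\omega := e^{r}\theta$ and check that it collapses to the stated $r$-multiplicativity of $\theta$. First I would record the two structural facts I will use. Since $\mathbb{R}_r$ is to be a representation, $r$ must be a $1$-cocycle, i.e. $r(gh) = r(g) + r(h)$ for all composable $(g,h)$; this is exactly the content of the remark preceding the lemma, and it is what makes $\mathbb{R}_r$ well defined. Second, by the definition of $\mathbb{R}_r$ (Remark \ref{remark-cocycles}), the action of an arrow $g$ on a value $v$ in the fibre $\mathbb{R}$ is $g\cdot v = e^{r(g)} v$.

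Next I would substitute $\omega = e^{r}\theta$ into the multiplicativity condition of Definition \ref{def-mult-form}, namely $(m^{\ast}\omega)_{(g,h)} = \pr_1^{\ast}\omega + g\cdot(\pr_2^{\ast}\omega)$, and evaluate each of the three terms at a composable pair $(g,h)$. On the left, $m^{\ast}(e^{r}\theta) = (e^{r}\circ m)\, m^{\ast}\theta$, and the cocycle identity gives $e^{r(gh)} = e^{r(g)}e^{r(h)}$, so the left-hand side is $e^{r(g)}e^{r(h)}(m^{\ast}\theta)_{(g,h)}$. On the right, $\pr_1^{\ast}\omega|_{(g,h)} = e^{r(g)}\pr_1^{\ast}\theta$, while $\pr_2^{\ast}\omega|_{(g,h)} = e^{r(h)}\pr_2^{\ast}\theta$; applying the action of $g$ to the latter multiplies it by the further factor $e^{r(g)}$, giving $e^{r(g)}e^{r(h)}\pr_2^{\ast}\theta$.

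The identity then reads $e^{r(g)}e^{r(h)}(m^{\ast}\theta)_{(g,h)} = e^{r(g)}\pr_1^{\ast}\theta + e^{r(g)}e^{r(h)}\pr_2^{\ast}\theta$, and dividing by the nonvanishing factor $e^{r(g)}e^{r(h)}$ yields $(m^{\ast}\theta)_{(g,h)} = e^{-r(h)}\pr_1^{\ast}\theta + \pr_2^{\ast}\theta$. Since $e^{-r(h)} = \pr_2^{\ast}(e^{-r})|_{(g,h)}$, this is precisely the $r$-multiplicativity equation in the statement. Every manipulation above is an equivalence: the only step that is not a formal rewriting is the division by $e^{r(g)}e^{r(h)}$, which is legitimate because the exponential never vanishes, so the two conditions are indeed equivalent in both directions.

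I do not expect any genuine obstacle here beyond careful bookkeeping of the exponential factors and of the fibres on which the action of $g$ operates (the value of $\pr_2^{\ast}\omega$ lives in $E_{s(g)}$ and is transported by $g$ to $E_{t(g)}$, which in the trivial line bundle is invisible apart from the scalar $e^{r(g)}$). The one point that I think deserves an explicit sentence is the well-definedness issue: one should note that $r$-multiplicativity of $\theta$ already forces $r$ to be a cocycle, so that $\mathbb{R}_r$ genuinely is a representation, and the equivalence makes sense starting from either side.
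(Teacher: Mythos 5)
Your computation is correct and is exactly the intended argument: the paper states this lemma without proof (as an ``immediate'' remark), and unwinding the multiplicativity condition for $e^{r}\theta$ with the action $g\cdot v=e^{r(g)}v$, using $r(gh)=r(g)+r(h)$ and cancelling the nonvanishing exponential factors, is the only step involved. Your closing observation that $r$-multiplicativity already forces the cocycle identity for $r$ (so that $\mathbb{R}_r$ is a genuine representation and the equivalence is well posed) is precisely the point the paper makes in the sentence preceding the lemma.
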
 

The following indicates the conceptual advantage of the ``wide'' point of view.

\begin{lemma} \label{lemma-contact-versus-wide} Assume that the $s$-fibers of $\Sigma$ are connected. Then the construction $(\Sigma, \theta, r)\mapsto (\Sigma, \textrm{Ker}(\theta))$ induces a 1-1 correspondence between contact groupoids $(\Sigma, \theta, r)$ and contact groupoids in the wide sense $(\Sigma, \H)$ with the property that the associated line bundle is trivial.
\end{lemma}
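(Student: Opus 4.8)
The plan is to prove the two directions separately, using two tools already at hand: the immediately preceding lemma, which identifies $r$-multiplicativity of $\theta$ with ordinary multiplicativity of $e^{r}\theta$ as a form valued in $\mathbb{R}_r$, and the duality between regular multiplicative $1$-forms and multiplicative distributions developed in Subsection \ref{sec: distributions} (in particular the canonical representation $E=A/\mathfrak{g}$ and the canonical form $\theta_{\H}$ of Lemma \ref{lemma-from-H-to-theta}). The hypothesis that the $s$-fibers are connected will be used exactly once, to convert a line-bundle representation into a scalar cocycle.

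For the forward direction (well-definedness of the construction) I would start from a contact groupoid $(\Sigma,\theta,r)$. By the preceding lemma $e^{r}\theta$ is multiplicative with values in $\mathbb{R}_r$, and it is regular because $\theta$, being a contact form, is a nowhere-vanishing $1$-form; hence by the relevant lemma of Subsection \ref{sec: distributions} the distribution $\H:=\textrm{Ker}(\theta)=\textrm{Ker}(e^{r}\theta)$ is multiplicative. It then remains to recognize $\H$ as a contact hyperfield with trivial associated line bundle: regularity of $\theta$ gives $\textrm{codim}\,\H=1$ and trivializes $L=T\Sigma/\H$ via $[v]\mapsto\theta(v)$, while for $X,Y\in\Gamma(\H)$ one has $d\theta(X,Y)=-\theta([X,Y])$, so under this trivialization the intrinsic pairing $I$ of \eqref{I-H} is carried to $-d\theta|_{\H}$. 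Thus maximal non-integrability of $\H$ is literally equivalent to the contact condition on $\theta$, and the forward map lands where claimed.

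For the reverse direction I would reconstruct $(\theta,r)$ from a multiplicative contact hyperfield $\H$ with $L$ trivial. The duality of Subsection \ref{sec: distributions} supplies the canonical data $\mathfrak{g}=\H^{s}|_{M}$, the line-bundle representation $E=A/\mathfrak{g}$, and the canonical multiplicative form $\theta_{\H}\in\Omega^1(\Sigma,t^{\ast}E)$ with $\textrm{Ker}(\theta_{\H})=\H$. Since $L\cong t^{\ast}E$, triviality of $L$ forces $E$ to be a trivial line bundle; choosing a trivialization turns the $\Sigma$-action on $E$ into an action on the trivial line bundle, which by Remark \ref{remark-cocycles} (this is where connectedness of the $s$-fibers enters) is encoded by a unique $1$-cocycle $r$, i.e. gives an isomorphism of representations $E\cong\mathbb{R}_r$. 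Transporting $\theta_{\H}$ through this isomorphism produces a scalar, $\mathbb{R}_r$-valued multiplicative form, which by the preceding lemma equals $e^{r}\theta$ for a $1$-form $\theta$ that is then $r$-multiplicative, with $\textrm{Ker}(\theta)=\H$ and with $\theta$ contact by the same computation $d\theta|_{\H}=-I$ as above. To see the constructions are mutually inverse, the essential bookkeeping point is that $r$ is pinned down by $\theta$: if $\theta$ is both $r$- and $r'$-multiplicative, subtracting the two multiplicativity equations gives $\pr_2^{\ast}(e^{-r}-e^{-r'})\,\pr_1^{\ast}\theta=0$, and regularity of $\theta$ forces $r=r'$.

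The main obstacle is the passage in the reverse direction from the representation $E$ to the scalar cocycle $r$. A trivialization of $E$ is only well defined up to a nowhere-vanishing function $k$ on $M$; replacing it rescales $\theta$ by $k\circ s$ and alters $r$ by the coboundary $\delta(\log k)$, so some care is needed to see that the reconstruction is genuinely inverse to the forward map rather than merely inverse up to this gauge. The heart of the argument is therefore to show that this residual freedom is exactly the freedom already built into the data $(\theta,r)$ subject to $\textrm{Ker}(\theta)=\H$ (equivalently, that each such rescaled pair is again $r$-multiplicative with the modified $r$, which is a direct check using $s\circ m=s\circ\pr_2$), so that fixing the trivialization of $L$ provided by the hypothesis makes the two constructions honestly inverse. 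Connectedness of the $s$-fibers is indispensable twice here: once for the cocycle-to-representation dictionary of Remark \ref{remark-cocycles}, and once to guarantee, via the vanishing argument above, that $r$ is uniquely determined by $\theta$.
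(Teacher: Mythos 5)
Your proof is correct and follows essentially the same route as the paper's: the forward direction via the duality of Subsection \ref{sec: distributions}, and the reverse direction by feeding the canonical form $\theta_{\H}$ of Lemma \ref{lemma-from-H-to-theta} and the cocycle $r$ from Remark \ref{remark-cocycles} into the lemma identifying $r$-multiplicativity of $\theta$ with multiplicativity of $e^{r}\theta$. Your extra care about the gauge freedom $(\theta, r)\mapsto ((k\circ s)\theta,\, r+\delta(\log k))$ is a genuine point that the paper's one-line proof of bijectivity glosses over: as you observe, the correspondence is honestly one-to-one only once the trivialization of $L$ is fixed (or after quotienting both sides by this rescaling), and your verification that $r$ is pinned down by $\theta$ via regularity is the right complement to that.
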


\begin{proof} It is clear that $\textrm{Ker}(\theta)$ has the desired properties. Conversely, assume that  we start with $(\Sigma, \H)$ so that $L$ is trivial. 
First of all, we know that the multiplicativity of $\H$ makes $L$ into a representation
of $\Sigma$ (cf. subsection \ref{sec: distributions}); we also know that a representation of $\Sigma$ on a trivial line bundle is uniquely determined by a $1$-cocycle (cf. e.g. Remark \ref{remark-cocycles}); this
gives rise to the cocycle $r$. Then the canonical projection $T\Sigma \to L$ gives a $1$-form $\overline{\theta}\in \Omega^1(\Sigma)$ which, by Proposition \ref{lemma-from-H-to-theta}  is multiplicative as a 
form with coefficients in $\mathbb{R}_r$. Hence, by the previous lemma, $\theta:= e^{-r}\overline{\theta}$ is $r$-multiplicative.
\end{proof}\\

We now pass to the corresponding infinitesimal structures.

\begin{definition} Let $M$ be a manifold.
\begin{itemize}
\item A Jacobi structure on $M$ is a pair $(\Lambda, R)$ consisting of a 
bivector $\Lambda$ and a vector field $R$ (the Reeb vector field)
satisfying 
\[ [\Lambda, \Lambda]= 2 R\wedge \Lambda,\ \ [\Lambda, R]= 0.\]

\item A Jacobi structure in the wide sense on $M$ is a pair $(L, \{\cdot, \cdot \})$ consisting of a line bundle $L$ over $M$ and a Lie
bracket $\{\cdot, \cdot \}$ on the space of sections $\Gamma(L)$, with the property that it is local, i.e. 
\[ \textrm{sup}(\{u, v\})\subset \textrm{sup}(u)\cap \textrm{sup}(v)\ \ \ \ \forall\ u, v\in \Gamma(L).\]
\end{itemize}
\end{definition}

The second notion appears in the literature under various names. Kirillov introduced them under the notion of local Lie algebra \cite{Kirillov}; Marle uses the term Jacobi bundle \cite{Marle}. Our term ``wide'' is ad-hoc, for compatibility with the previous definitions; however, we will also say that $L$ is a Jacobi bundle. 

For a Jacobi bundle $L$, Kirillov 
proves that $\{\cdot, \cdot\}$ must be a differential operator of order at most one in each argument. When $L= \mathbb{R}_M$ is the trivial line bundle, this implies that the
bracket must be of type 
\[  \{f, g\}_{\Lambda, R}= \Lambda(df, dg)+ \Lie_R(f)g- f\Lie_R(g)\ \ \ (f, g\in \Gamma(\mathbb{R}_M)= C^{\infty}(M))\]
for some bivector $\Lambda$ and vector field $R$. A straightforward check shows that this satisfies the Jacobi identity if and only if $(\Lambda, R)$ 
is a Jacobi structure. Hence, one obtains the following well-known:

\begin{lemma} \label{Jacobi-wide}
$(\Lambda, R)\mapsto (\mathbb{R}_{M}, \{\cdot, \cdot\}_{\Lambda, R})$ defines a bijection between Jacobi structures and local Lie algebras with trivial underlying line bundle. 
\end{lemma}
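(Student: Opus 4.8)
The plan is to establish the bijection in three standard steps --- well-definedness of the assignment $(\Lambda,R)\mapsto\{\cdot,\cdot\}_{\Lambda,R}$, injectivity, and surjectivity --- where the last step is exactly where the cited theorem of Kirillov enters. For well-definedness, I would first observe that for any bivector $\Lambda$ and vector field $R$ the formula
\[ \{f,g\}_{\Lambda,R}= \Lambda(df,dg)+ \Lie_R(f)\,g- f\,\Lie_R(g) \]
is manifestly $\mathbb{R}$-bilinear, and is antisymmetric because $\Lambda$ is antisymmetric and the term $\Lie_R(f)g-f\Lie_R(g)$ changes sign under $f\leftrightarrow g$. It is local since it is a bidifferential operator of order at most one in each argument, so $\mathrm{supp}(\{f,g\}_{\Lambda,R})\subset\mathrm{supp}(f)\cap\mathrm{supp}(g)$. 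The only substantive point is the Jacobi identity, and the heart of the lemma is the equivalence
\[ \{f,\{g,h\}_{\Lambda,R}\}_{\Lambda,R}+\text{cyclic}=0\quad\Longleftrightarrow\quad [\Lambda,\Lambda]=2R\wedge\Lambda,\ \ [\Lambda,R]=0. \]

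To prove this equivalence I would expand the Jacobiator and sort the resulting terms by their order in the derivatives of $f,g,h$. The part that is trilinear in $(df,dg,dh)$ is, up to the standard normalization, $\tfrac12[\Lambda,\Lambda](df,dg,dh)$ coming from the pure-$\Lambda$ compositions (the usual identity computing the Poisson Jacobiator from the Schouten square), together with the trilinear contribution of the $R$-terms, which assembles into $(R\wedge\Lambda)(df,dg,dh)$; requiring this top-order part to vanish for all $f,g,h$ is precisely $[\Lambda,\Lambda]=2R\wedge\Lambda$. The remaining terms (those carrying a second derivative or an undifferentiated factor) collect into a single contraction of the Schouten bracket $[\Lambda,R]$, so their vanishing is exactly $[\Lambda,R]=0$. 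This bookkeeping is the one genuinely computational step and is where I expect the main effort to lie; as a cleaner alternative I would verify it by Poissonization, passing to $M\times\mathbb{R}$ with the homogeneous bivector $e^{-t}(\Lambda+R\wedge\partial_t)$, under which the Jacobi identity for $\{\cdot,\cdot\}_{\Lambda,R}$ becomes the single Poisson condition $[\widetilde\Lambda,\widetilde\Lambda]=0$ that unwinds to the two stated equations. Everything else is formal.

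Injectivity is then immediate by reconstructing $(\Lambda,R)$ from the bracket. Evaluating on the constant function $1$ gives
\[ \{1,g\}_{\Lambda,R}= -\Lie_R(g), \]
which recovers $R$, and then
\[ \Lambda(df,dg)= \{f,g\}_{\Lambda,R}-\Lie_R(f)\,g+ f\,\Lie_R(g) \]
recovers $\Lambda$; hence distinct pairs yield distinct brackets.

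Finally, for surjectivity I would invoke Kirillov's theorem as recalled just before the statement: any local Lie bracket on $\Gamma(\mathbb{R}_M)=C^{\infty}(M)$ is a bidifferential operator of order at most one in each argument. Writing such an operator in local coordinates, antisymmetry forces its principal (second-derivative) part to be a genuine antisymmetric tensor $\Lambda$, forces the zeroth-order coefficient to vanish, and identifies the two first-order coefficients as $\pm R$ for a single vector field $R$; thus every local Lie bracket is of the form $\{\cdot,\cdot\}_{\Lambda,R}$. Since it satisfies the Jacobi identity, the equivalence above shows that $(\Lambda,R)$ is a Jacobi structure mapping to it, so the assignment is onto and the bijection follows.
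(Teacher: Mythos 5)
Your proposal is correct and follows essentially the same route as the paper, which states the lemma as well-known and proves it in the preceding paragraph by invoking Kirillov's first-order theorem to pin down the form of the bracket and then declaring the equivalence of the Jacobi identity with $[\Lambda,\Lambda]=2R\wedge\Lambda$, $[\Lambda,R]=0$ to be ``a straightforward check''. Your version merely fills in the details the paper leaves implicit (the antisymmetry argument forcing the zeroth-order term to vanish and the two first-order coefficients to coincide, and the recovery of $R$ via $\{1,g\}=-\Lie_R(g)$ for injectivity).
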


Next, we sketch the connection between contact groupoids and Jacobi structures pointing out the relevance of the Spencer operator and of Theorem \ref{t1}.

\subsubsection*{The Lie functor}
In one direction (the Lie functor), starting with a contact groupoid in the wide sense $(\Sigma, \H)$,
there is an induced Jacobi bundle on $M$. The relevance of the Spencer operator $D$ associated to $\H$ is the following: since $\H$ is contact, it follows that
the vector bundle map associated to $D$ (cf. Example \ref{1-jets-convenient}),
\[ j_{D}: A\rmap \Jet^1L,\]
is an isomorphism, where $A$ is the Lie algebroid of $\Sigma$ and $L$ is the line bundle associated to $\H$. Identifying $A$ with $\Jet^1L$, we
obtain a Lie bracket $[\cdot, \cdot]$ on $\Jet^1L$. On $\Gamma(L)$ we define the bracket
\[ \{u, v\}:= \pr ([j^1u, j^1v]).\]

\begin{lemma} $(L, \{\cdot, \cdot\})$ is a Jacobi structure in the wide sense.
\end{lemma}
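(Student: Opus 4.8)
The plan is to exploit the isomorphism $j_{D}\colon A\stackrel{\sim}{\To}\Jet^1L$ coming from the contact condition (Remark \ref{rk-convenient}) in order to transport everything to the first jet bundle. Under this identification the Lie algebroid bracket of $A$ becomes a bracket $[\cdot,\cdot]$ on $\Gamma(\Jet^1L)$, the symbol map $l$ becomes the canonical projection $\pr\colon\Jet^1L\to L$, and, because $j_{D^{\textrm{clas}}}=\textrm{Id}$ and $D=D^{\textrm{clas}}\circ j_{D}$ (Remark \ref{rk-convenient}), the Spencer operator $D$ becomes the classical Spencer operator $D^{\textrm{clas}}$ of $L$, with $\nabla$ the induced representation of $\Jet^1L$ on $L$. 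In particular I may freely use the holonomicity $D^{\textrm{clas}}(\jet^1u)=0$ (Example \ref{1-jets-convenient}) together with the Spencer axioms (\ref{eq: compatibility-1}) and (\ref{nabla-out-of-D}). With this in hand, $\mathbb{R}$-bilinearity and antisymmetry of $\{u,v\}=\pr[\jet^1u,\jet^1v]$ are immediate from those of $[\cdot,\cdot]$.

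First I would extract a closed formula for the bracket. Applying (\ref{nabla-out-of-D}) to $\alpha=\jet^1u$, $\alpha'=\jet^1v$ and using $\pr(\jet^1v)=v$ together with $D^{\textrm{clas}}(\jet^1u)=0$ gives
\[
\{u,v\}=\pr[\jet^1u,\jet^1v]=\nabla_{\jet^1u}v .
\]
Combined with the Leibniz identity for $\nabla$ and the resulting antisymmetric expression $\{u,v\}=-\nabla_{\jet^1v}u$, this shows that $\{\cdot,\cdot\}$ is a first-order bidifferential operator in each argument; in particular $\textrm{sup}\{u,v\}\subset\textrm{sup}(u)\cap\textrm{sup}(v)$, which is the locality required by the definition of a Jacobi structure in the wide sense. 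Thus the only substantial point left is the Jacobi identity.

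The heart of the argument --- and the step I expect to be the main obstacle --- is to show that holonomic sections are closed under the bracket, namely
\[
[\jet^1u,\jet^1v]=\jet^1\{u,v\}.
\]
I would prove this by comparing the two components in the Spencer decomposition (\ref{J-decomposition}): the $\pr$-component of $[\jet^1u,\jet^1v]$ equals $\{u,v\}$ by definition, while its $D^{\textrm{clas}}$-component vanishes. The latter is exactly where the compatibility axiom enters: feeding $\alpha=\jet^1u$, $\alpha'=\jet^1v$ into (\ref{eq: compatibility-1}) (equivalently (\ref{vertical2}) in the $k=1$ formulation) and using $D^{\textrm{clas}}(\jet^1u)=D^{\textrm{clas}}(\jet^1v)=0$ makes every term on the right-hand side vanish, so $D^{\textrm{clas}}[\jet^1u,\jet^1v]=0$. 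Hence $[\jet^1u,\jet^1v]$ has $\pr$-component $\{u,v\}$ and vanishing $D^{\textrm{clas}}$-component, which by (\ref{J-decomposition}) means precisely $[\jet^1u,\jet^1v]=\jet^1\{u,v\}$. Once this is established, $\jet^1$ is an injective map intertwining $\{\cdot,\cdot\}$ with the genuine Lie bracket $[\cdot,\cdot]$ on $\Gamma(\Jet^1L)$, so the Jacobi identity for $[\cdot,\cdot]$ descends to $\{\cdot,\cdot\}$ and the proof is complete. The one delicate point to verify carefully is that the transported data really are the classical Spencer data of $L$, since all three vanishing arguments rest on the holonomicity $D^{\textrm{clas}}(\jet^1u)=0$.
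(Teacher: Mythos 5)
Your proposal is correct and follows essentially the same route as the paper: identify $A$ with $\Jet^1L$ via $j_D$ so that $D$ becomes $D^{\textrm{clas}}$, use the compatibility axiom (\ref{eq: compatibility-1}) together with holonomicity to show $D^{\textrm{clas}}[\jet^1u,\jet^1v]=0$, conclude $[\jet^1u,\jet^1v]=\jet^1\{u,v\}$, and let the Jacobi identity descend. The only difference is cosmetic: you justify locality via the formula $\{u,v\}=\nabla_{\jet^1u}v$ and the Leibniz rule, where the paper simply declares locality clear.
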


\begin{proof} The bracket is clearly local, hence we are left with proving the Jacobi identity. 
For this it suffices to show that
\[ [j^1u, j^1v]= j^1\{u, v\}\]
for all $u, v\in \Gamma(L)$. Note that, after the identification of $A$ with $\Jet^1L$, the $D$ is identified with the
classical Spencer operator (see Example \ref{1-jets-convenient}); in particular, $D(\xi)= 0$ if and only if $\xi$ is the first jet of a section. 
Fixing $u$ and $v$, the equation (\ref{eq: compatibility-1}) for the Spencer operator implies that $D$ kills $[j^1u, j^1v]$,
hence $[j^1u, j^1v]= j^1s$ for some $s$. Applying $\pr$, we find $s= \{u, v\}$.
\end{proof}\\

Of course, starting with a contact groupoid $(\Sigma, \theta, r)$, Lemma \ref{lemma-contact-versus-wide} and Lemma \ref{Jacobi-wide}
ensure the existence a Jacobi structure $(\Lambda, R)$ on the base.

\subsubsection*{Integrability}
Conversely, start with a Jacobi structure in the wide sense $(L, \{\cdot, \cdot\})$ on $M$. With the Lie functor in mind, the strategy is quite clear:
consider the induced Lie algebroid structure on $\Jet^1L$ with the property that
\[ [j^1u, j^1v]= j^1\{u, v\}\]
for all $u, v\in \Gamma(L)$ and show that the classical Spencer operator $D$ is a Spencer operator with respect to this Lie algebroid structure. Then,
if $\Jet^1L$ comes from a Lie groupoid $\Sigma$, assumed to be $s$-simply connected, integrating $D$ gives the multiplicative hyperfield $\H$ on $\Sigma$ and
the fact that $j_{D}$ is an isomorphism implies that $\H$ is contact. 

For instance, when $(L, \{\cdot, \cdot\})$ comes from a Jacobi structure $(\Lambda, R)$, $\Jet^1L= T^*M\oplus \mathbb{R}$ and, starting from the previous formula, one
finds the Lie algebroid
\[ A_{\Lambda, R}:= T^{\ast}M\oplus \mathbb{R},\]
with anchor $\rho_{\Lambda, R}= \rho$ given by
\[ \rho(\eta, \lambda)= \Lambda^{\sharp}(\eta)+ \lambda R\]
and bracket 
\begin{align}
& [(\eta, 0), (\xi, 0)]_{\Lambda, R}= ([\eta, \xi]_{\Lambda}- i_{R}(\eta\wedge \xi), \Lambda(\eta,\xi)),   \nonumber\\
& [(0, 1), (\xi, 0)]_{\Lambda, R}= (\Lie_{R}(\xi), 0), \nonumber  \\
& [(0, 1), (0, 1)]_{\Lambda, R}= (0, 0)\nonumber
\end{align}
(extended to general elements using bilinearity and the Leibniz identity). The classical Spencer operator becomes 
\[ D: \Gamma(A_{\Lambda, R})\To \Omega^1(M), \ \ D(\eta, f)= \eta+ \d f,\ l(\eta, f)= f  .\]
Of course, checking directly that $D$ is a Spencer operator on $A_{\Lambda, R}$ is rather tedious. The advantage of the ``wide'' point of view is that 
it provides a more compact and computationally free approach. 

So, let's return to our $(L, \{\cdot, \cdot\})$. It is rather unfortunate (and surprising) that the 
definition of the associated Lie algebroid $\Jet^1L$ is missing from the literature. The remaining part of this section is mostly devoted to this point (after that, the 
part with the Spencer operator is immediate). The starting point is the result of Kirillov mentioned above: $\{u, v\}$ must be a differential operator of order at most
one in each argument. To fix the notations, recall that a differential operator 
\[ P: \Gamma(E)\To \Gamma(F)\]
of order at most one, where $E$ and $F$ are vector bundles, has a symbol
\[ \sigma_{P}\in \Gamma(TM\otimes \textrm{Hom}(E, F)).\]
The defining property is
\[ P(fs)= fP(s) + \sigma_{P}(df)(s) \]
for all $s\in \Gamma(E)$, $f\in C^{\infty}(M)$. Of course, when $E= F= L$ is one-dimensional, we get $\sigma_{P}\in \Omega^1(M)$. 
Fixing $u\in \Gamma(L)$, applying this to the operator $\{u, \cdot\}$, we denote the associated symbol by $\rho^1(u)$. This defines a map
\[ \rho^1: \Gamma(L)\To \X(M),\]
characterised by the property that
\[ \{u, fv\}= f\{u, v\}+ \Lie_{\rho^1(u)}(f) v\]
for all $u, v\in \Gamma(L)$. A straightforward computation with the Jacobi identity for $u, fv, w$ combined with the last equations implies that $\rho^1$ is a Lie algebra map:
\[ \rho^1(\{u, v\})= [\rho^1(u), \rho^1(v)].\]
 Unlike the case of Lie algebroids, $\rho^1$ need not be $C^{\infty}(M)$-linear. However, it is a differential operator of order at most one; hence it
satisfies the equation 
\[ \rho^1(fu)= f\rho^1(u)+ \rho^2(df\otimes u) ,\]
where $\rho^2$ is the symbol of $\rho^1$, interpreted as a vector bundle map
\[ \rho^2: \textrm{Hom}(TM, L)\To TM .\]
We define the anchor of $\Jet^1L$ by putting $\rho^1$ and $\rho^2$ together (see (\ref{J-decomposition})):
\[ \rho: \Gamma(\Jet^1L)\cong \Gamma(L)\oplus \Omega^1(M, L)\stackrel{\rho^1-\rho^2}{\To} \X(M)\]
Using the classical Spencer operator, one can write more compactly:
\[ \rho(\xi)= \rho^1(\pr(\xi))- \rho^2(D^{\textrm{clas}}(\xi)).\]
Finally, the Lie bracket for $\Jet^1L$ is, as we wanted, given by
\[ [j^1u, j^1v]:= j^1(\{u, v\}),\]
extended by the Leibniz identity to arbitrary sections (see also Remark \ref{curiosity}).

\begin{lemma} 
$(\Jet^1L, [\cdot, \cdot],\rho)$ is a Lie algebroid.
\end{lemma}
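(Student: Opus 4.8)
The plan is to verify the Lie algebroid axioms directly, the only delicate point being that the bracket is, a priori, prescribed only on the holonomic sections $j^1u$; these $\C(M)$-generate $\Gamma(\Jet^1L)$ but satisfy the nontrivial relations $j^1(fu)=f\cdot j^1u - i(\mathrm{d}f\otimes u)$ forced by the twisted module structure of Example \ref{1-jets-convenient}, so ``extend by Leibniz'' must be shown to be consistent. To sidestep this ambiguity I would first replace the recursive prescription by a closed formula, in direct analogy with the bracket of $\Jet^1A$ recalled earlier for a Lie algebroid $A$.

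The key input is Kirillov's theorem that $\{\cdot,\cdot\}$ is a first order differential operator in each argument. This says exactly that $u\mapsto\{u,\cdot\}$ is a first order differential operator $\Gamma(L)\to\Gamma(\Der(L))$, where $\Der(L)$ is the Atiyah (gauge) Lie algebroid of the line bundle $L$ -- derivations of $L$, with anchor the symbol $\sigma$ and bracket the commutator -- which is a Lie algebroid for free. By the universal property of first jets it factors through a vector bundle map $a\colon\Jet^1L\to\Der(L)$ with $a(j^1u)=\{u,\cdot\}$. Composing with $\sigma$ gives a bundle map agreeing with $\rho$ on all $j^1_xu$, and since these span each fibre of $\Jet^1L$ we get $\rho=\sigma\circ a$; in particular the anchor is a bundle map (the defining formula, together with $\rho^1(fu)=f\rho^1(u)+\rho^2(\mathrm{d}f\otimes u)$, is just a reformulation of this). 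The map $a$ also provides an action $\nabla_\xi u:=a(\xi)(u)$ of $\Gamma(\Jet^1L)$ on $\Gamma(L)$ by first order operators, hence Lie derivatives $\Lie_\xi$ on $\Omega^{\bullet}(M,L)$. I would then define
\[ [\xi,\eta]:=j^1\{\pr(\xi),\pr(\eta)\}+\Lie_\xi(D^{\mathrm{clas}}\eta)-\Lie_\eta(D^{\mathrm{clas}}\xi), \]
observe that it reduces to $j^1\{u,v\}$ on holonomic sections (where $D^{\mathrm{clas}}$ vanishes), and note that antisymmetry is manifest.

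With a genuine formula at hand I would verify the Leibniz identity $[\xi,f\eta]=f[\xi,\eta]+\Lie_{\rho(\xi)}(f)\eta$; this is a short computation from $\rho^1(fu)=f\rho^1(u)+\rho^2(\mathrm{d}f\otimes u)$ and the Leibniz rule $D^{\mathrm{clas}}(f\eta)=fD^{\mathrm{clas}}(\eta)+\mathrm{d}f\,\pr(\eta)$ together with the module structure. Checking that the closed bracket agrees with the Leibniz extension of $[j^1u,j^1v]=j^1\{u,v\}$ then settles well-definedness. For the Jacobi identity and $\rho([\xi,\eta])=[\rho(\xi),\rho(\eta)]$ I would invoke the standard fact that, once Leibniz holds, both the Jacobiator and the anchor defect are $\C(M)$-multilinear; since holonomic sections $\C(M)$-generate $\Gamma(\Jet^1L)$, it then suffices to test on $j^1u,j^1v,j^1w$, where everything collapses onto the corresponding identities for $\{\cdot,\cdot\}$ -- the Jacobi identity of the Jacobi bracket, and the fact established earlier in this subsection that $\rho^1$ is a Lie algebra map.

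The main obstacle is precisely this well-definedness step. The content is not in any single computation but in setting it up so that reduction to holonomic sections is legitimate: one must first extract the operator $a$ from Kirillov's first order structure, then establish the axioms in the correct logical order (the anchor being a morphism and the Leibniz identity first, so that the Jacobiator becomes tensorial and can be tested on the spanning holonomic sections). Everything downstream -- the explicit algebroid $A_{\Lambda,R}$ and the verification that $D^{\mathrm{clas}}$ is a Spencer operator -- is then immediate, which is the conceptual payoff of the ``wide'' point of view.
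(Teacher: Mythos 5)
Your proposal is correct, and its endgame coincides with the paper's own argument: once the Leibniz identity is in place, the anchor defect $\rho([\xi,\eta])-[\rho(\xi),\rho(\eta)]$ is $\C(M)$-bilinear and the Jacobiator is $\C(M)$-trilinear, so both may be tested on holonomic sections, where they collapse onto the facts that $\rho^1$ is a Lie algebra map and that $\{\cdot,\cdot\}$ satisfies the Jacobi identity. Where you genuinely depart from the paper is in how the bracket is defined: the paper simply declares $[\jet^1u,\jet^1v]=\jet^1\{u,v\}$ ``extended by the Leibniz identity'' and asserts that Leibniz ``holds by construction,'' leaving implicit the consistency issue you correctly flag (the holonomic sections generate $\Gamma(\Jet^1L)$ over $\C(M)$ but satisfy the relations $\jet^1(fu)=f\,\jet^1u-i(\d f\otimes u)$). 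Your remedy --- extracting from Kirillov's first-order result a bundle map $a\colon \Jet^1L\to\Der(L)$ with $a(\jet^1u)=\{u,\cdot\}$, hence an anchor $\rho=\sigma\circ a$ and Lie derivatives $\Lie_\xi$, and then taking the closed formula $[\xi,\eta]=\jet^1\{\pr\xi,\pr\eta\}+\Lie_\xi(D^{\textrm{clas}}\eta)-\Lie_\eta(D^{\textrm{clas}}\xi)$ as the definition --- is sound; one can check that $a(\xi)(v)=\{\pr\xi,v\}+D^{\textrm{clas}}(\xi)(\rho^1(v))$, which is exactly the operator $\nabla$ the paper records, and your closed formula is precisely the one the paper relegates to Remark \ref{curiosity} as an afterthought. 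So you buy an honest well-definedness argument (and a conceptually clean home, the Atiyah algebroid of $L$, for Kirillov's first-order structure) at the cost of a somewhat longer direct verification of the Leibniz identity from the closed formula; the paper buys brevity by taking consistency of the Leibniz extension for granted. Your insistence on the logical order --- anchor morphism and Leibniz first, so that the Jacobiator is tensorial --- matches the paper exactly.
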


\begin{proof} The Leibniz identity holds by construction. For the Jacobi identity $\textrm{Jac}(\xi_1, \xi_2, \xi_3)= 0$, 
it is clearly satisfied when the $\xi_i$'s are first jets of sections of $L$. Hence it suffices to remark that the expression 
$\textrm{Jac}$ is $C^{\infty}(M)$-linear in all arguments. Using the Leibniz identity, we see that this is equivalent to the fact that the
anchor is a Lie algebra map:
\[ \rho([\xi_1, \xi_2])= [\rho(\xi_1), \rho(\xi_2)].\]
This time, the Leibniz identity implies that the difference  between the two terms is $C^{\infty}(M)$-bilinear, hence it suffices to
check it when $\xi_1= j^1u$, $\xi_2= j^1v$. This is equivalent to $\rho^1$ being a Lie algebra map.
\end{proof}\\

\begin{lemma} The classical Spencer operator $D^{\textrm{clas}}: \Gamma(\Jet^1L) \To \Omega^1(M, L)$ 
is a Spencer operator on the Lie algebroid $\Jet^1L$. 
\end{lemma}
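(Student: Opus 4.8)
The plan is to verify that $D^{\textrm{clas}}$ meets the axioms of Definition \ref{relative-Spencer}, whose equivalence with the notion of a $1$-Spencer operator is recorded in Remark \ref{remark-dual}; this is the natural framework here, since the intended symbol $l= \pr: \Jet^1L\to L$ is surjective, with kernel $\mathfrak{g}= \textrm{Hom}(TM, L)$ and quotient $E= \Jet^1L/\mathfrak{g}\cong L$. The Leibniz identity relative to $\pr$ is built into the very definition of the classical Spencer operator (Example \ref{1-jets-convenient}), so $l= \pr$ is forced and that axiom comes for free. Accordingly I define the candidate representation $\nabla$ of $\Jet^1L$ on $L$ by formula (\ref{nabla-out-of-D}), so that the compatibility between $\nabla$ and $D^{\textrm{clas}}$ holds by fiat; it then remains to check (\ref{vertical}) (which also guarantees that $\nabla$ is well defined) and (\ref{vertical2}).

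Both verifications follow the same two-step pattern: first show the relevant defect is $C^{\infty}(M)$-multilinear in its algebroid arguments, then evaluate on holonomic generators, where everything collapses. For (\ref{vertical}), writing out $D^{\textrm{clas}}_{\rho(\beta)}(f\alpha)$ and $\pr[f\alpha, \beta]$ with the Leibniz rules for $D^{\textrm{clas}}$ and for the algebroid bracket shows that the defect $D^{\textrm{clas}}_{\rho(\beta)}\alpha+ \pr[\alpha, \beta]$ is tensorial in $\alpha\in\Gamma(\Jet^1L)$ and in $\beta\in\Gamma(\mathfrak{g})$ (here one uses $\pr(\beta)= 0$). Since $\Gamma(\mathfrak{g})$ is generated over $C^{\infty}(M)$ by the elements $i(\d f\otimes s)= f\jet^1 s- \jet^1(fs)$ and $\Gamma(\Jet^1L)$ by the holonomic sections $\jet^1 w$, it suffices to take $\alpha= \jet^1 w$, $\beta= i(\d f\otimes s)$. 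Then $D^{\textrm{clas}}_{\rho(\beta)}(\jet^1 w)$ vanishes by holonomicity $D^{\textrm{clas}}(\jet^1 w)= 0$, while $\pr[\jet^1 w, \beta]$ collapses to $0$ after expanding the bracket and using the defining identity $\{w, fs\}= f\{w, s\}+ \Lie_{\rho^1(w)}(f)s$ of the symbol $\rho^1$.

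For (\ref{vertical2}) the same strategy applies, now with the full defect $\Phi(\alpha, \alpha'):= D^{\textrm{clas}}([\alpha, \alpha'])- \Lie_{\alpha}D^{\textrm{clas}}(\alpha')+ \Lie_{\alpha'}D^{\textrm{clas}}(\alpha)$, the unpacked form of (\ref{eq: compatibility-1}). A direct computation of $\Phi(f\alpha, \alpha')$, using the Leibniz rule for the bracket, the Leibniz rule for $D^{\textrm{clas}}$, and the identity $\Lie_{f\alpha}\omega= f\Lie_{\alpha}\omega+ \d f\wedge i_{\rho(\alpha)}\omega$, gathers all anomalous terms into $\d f\cdot[\, \pr[\alpha, \alpha']- i_{\rho(\alpha)}D^{\textrm{clas}}(\alpha')+ \nabla_{\alpha'}\pr(\alpha)\,]$, whose bracketed factor is exactly the content of (\ref{nabla-out-of-D}) and therefore vanishes by our definition of $\nabla$; antisymmetry gives tensoriality in the second slot, and one checks in the same way that $\Phi$ is $C^{\infty}(M)$-linear in the test vector field $X$. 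Reducing to $\alpha= \jet^1 u$, $\alpha'= \jet^1 v$, every term of $\Phi$ contains a factor of the form $D^{\textrm{clas}}(\jet^1\,\cdot\,)$ — on the left via $[\jet^1 u, \jet^1 v]= \jet^1\{u, v\}$, on the right via $D^{\textrm{clas}}(\jet^1 u)= D^{\textrm{clas}}(\jet^1 v)= 0$ — so $\Phi(\jet^1 u, \jet^1 v)= 0$, and tensoriality propagates this to all sections.

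Finally, by Remark \ref{remark-dual}, (\ref{vertical}) makes $\nabla$ well defined and (\ref{vertical2}) specialised to $X\in \textrm{Im}(\rho)$ yields its flatness, so $(L, \nabla)$ is a genuine representation of $\Jet^1L$ and $D^{\textrm{clas}}$ is a Spencer operator relative to $\pr$. I expect the only real obstacle to be the tensoriality bookkeeping of the previous paragraph: the cancellation is not automatic and works precisely because $\nabla$ was defined through (\ref{nabla-out-of-D}) — conceptually, this is the same mechanism by which, in the general theory, (\ref{eq: compatibility-2}) is a consequence of (\ref{eq: compatibility-1}) and the Leibniz identities. Once tensoriality is in hand the verification is immediate, since holonomicity of $D^{\textrm{clas}}$ kills every term on first jets.
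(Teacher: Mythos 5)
Your proof is correct and follows essentially the same route as the paper's: establish $C^{\infty}(M)$-multilinearity of the relevant defects (with the cancellation of anomalous terms coming precisely from the definition of $\nabla$ via (\ref{nabla-out-of-D})), then reduce to holonomic sections, where everything vanishes by $D^{\textrm{clas}}(\jet^1 u)=0$ and $[\jet^1u,\jet^1v]=\jet^1\{u,v\}$. The only difference is cosmetic: you phrase the verification through the relative formulation of Definition \ref{relative-Spencer}, while the paper checks (\ref{eq: compatibility-2}) and then (\ref{eq: compatibility-1}) directly; by Remark \ref{remark-dual} these are the same checks.
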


Of course, the action $\nabla$ of $\Jet^1L$ on $L$ is the one induced by the formula (\ref{nabla-out-of-D}); hence it is characterised by
\[ \nabla_{j^1u}(v)= \{u, v\}. \]

\begin{proof} First note that the equation 
(\ref {eq: compatibility-2}) is satisfied (it is $C^{\infty}(M)$-linear in the arguments and, on holonomic sections, it reduces to the previous formula
for $\nabla$). In turn, this implies that the formula \ref{eq: compatibility-1} is $C^{\infty}(M)$-linear in the arguments hence, again, it suffices
to check it on holonomic sections, when it becomes $0= 0$. 
\end{proof}

\begin{remark}\label{curiosity}\rm As a curiosity, note that $\nabla$ and $[\cdot, \cdot]$ can be written on general elements using the
Spencer operator $D= D^{\textrm{clas}}$ as:
\[ \nabla_{\xi}(v)= \{\pr(\xi), v\}+ D(\xi)(\rho^1(v)),\]
\[ [\xi, \eta]= j^1\{\pr \xi, \pr \eta\}+ \Lie_{\xi}(D\eta)- \Lie_{\eta}(D\xi).\]
\end{remark}

In particular, we obtain the following integrability result, which should be compared with the one of \cite{Jacobi} (and please compare the proofs as well!).

\begin{corol} Given a Jacobi structure in the wide sense $(L, \{\cdot, \cdot\})$ over $M$, if the associated Lie algebroid $\Jet^1L$ comes from an
$s$-simply connected Lie groupoid $\Sigma$, then $\Sigma$ carries a contact hyperfield $\H$ making it into a contact groupoid in the wide sense;
$\H$ is uniquely characterized by the fact that the associated Spencer operator coincides with the classical one.
If $(L, \{\cdot, \cdot\})$ comes from a Jacobi structure $(\Lambda, R)$, then we end up with a contact groupoid $(\Sigma, \theta, r)$. 
\end{corol}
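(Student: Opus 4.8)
The plan is to present the statement as a direct application of the integrability Theorem \ref{t2} to the infinitesimal data assembled in the two preceding lemmas, followed by an analysis showing that the resulting distribution is contact. Those lemmas give that $\Jet^1L$ is a Lie algebroid and that the classical Spencer operator $D^{\textrm{clas}}\colon \Gamma(\Jet^1L)\to \Omega^1(M,L)$ is a $1$-Spencer operator on it whose symbol is the canonical projection $\pr\colon \Jet^1L\to L$. Since $\pr$ is surjective, with quotient representation $E=L$, this is exactly a Spencer operator relative to $\B:=\textrm{Ker}(\pr)=\textrm{Hom}(TM,L)$ in the sense of Definition \ref{relative-Spencer} (cf.\ Remark \ref{remark-dual}). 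Assuming that $\Jet^1L$ integrates to the $s$-simply connected groupoid $\Sigma$, Theorem \ref{t2} produces a unique multiplicative distribution $\H\subset T\Sigma$ whose symbol space is $\B$ and whose associated relative Spencer operator is $D^{\textrm{clas}}$; this is the asserted $\H$, together with its uniqueness characterization. It remains to verify that $\H$ is a contact hyperfield and to descend to the narrow notion in the Jacobi case.

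That $\H$ has codimension one is immediate, since $T\Sigma/\H\cong t^{\ast}E=t^{\ast}L$ is a line bundle. The substance is the maximal non-integrability, i.e.\ the non-degeneracy of the $L$-valued skew form $I(X,Y)=[X,Y]\ \textrm{mod}\ \H$. I would analyse $I$ first at a unit $1_x$, where multiplicativity gives $\H_{1_x}=T_xM\oplus\B_x$ with $\B_x=\textrm{Hom}(T_xM,L_x)$ and $L_x=T_{1_x}\Sigma/\H_{1_x}$. Two block computations are the heart of the matter. First, the block $I|_{T_xM\times T_xM}$ vanishes: since $u_\ast(T_xM)\subset\H$ at every unit, the image $u(M)$ is an integral submanifold of $\H$, so choosing the lifts tangent to $u(M)$ their bracket is tangent to $u(M)$ along it, hence lies in $u_\ast(TM)\subset\H$. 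Second, the off-diagonal block is a perfect pairing: for $X\in T_xM$ and $\omega\in\B_x$, lifting $X$ to $\tilde X\in\Gamma(\H)$ as in Theorem \ref{t2} and $\omega$ to a right-invariant field $\alpha^{r}$ with $\alpha\in\Gamma(\B)$, formula (\ref{D}) gives $I(X,\omega)=D_X(\alpha)(x)$, and since $j_D=\textrm{Id}$ here (Remark \ref{rk-convenient}) its restriction $j_\B$ to $\B$ is the identity of $\textrm{Hom}(TM,L)$, whence $I(X,\omega)=\omega(X)$.

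With these two facts the non-degeneracy at units is purely linear: an $L_x$-valued skew form on $T_xM\oplus\B_x$ whose $T_xM$-block vanishes and whose off-diagonal block is the evaluation pairing between $T_xM$ and $\textrm{Hom}(T_xM,L_x)$ has no kernel (the $\B_x\times\B_x$ block, which by (\ref{vertical}) is controlled by $l[\cdot,\cdot]$, is then irrelevant). To conclude that $I$ is non-degenerate at every point of $\Sigma$, not merely at units, I would transport this by multiplicativity: writing $\H=\textrm{Ker}(\theta)$ for the regular multiplicative $\theta\in\Omega^1(\Sigma,t^{\ast}L)$ of Lemma \ref{lemma-from-H-to-theta}, the $2$-form $d^{\nabla}\theta$ is again multiplicative, so its restriction to $\H$ is determined near the units and its non-degeneracy spreads to all of $\Sigma$. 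I expect this propagation to be the main technical obstacle, since it is the point at which the global (multiplicative) content is genuinely used; the block computation at units, by contrast, is a clean consequence of $j_D$ being an isomorphism.

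Finally, for the last assertion, if $(L,\{\cdot,\cdot\})$ comes from a genuine Jacobi structure $(\Lambda,R)$ then $L=\mathbb{R}_M$ is the trivial line bundle (Lemma \ref{Jacobi-wide}). As $\Sigma$ is in particular $s$-connected, Lemma \ref{lemma-contact-versus-wide} converts the contact hyperfield $\H$, whose associated line bundle is now trivial, into a contact groupoid $(\Sigma,\theta,r)$ in the narrow sense, with $r$ the $1$-cocycle encoding the representation of $\Sigma$ on $\mathbb{R}_M$ and $\theta=e^{-r}\overline{\theta}$. This yields the claimed $(\Sigma,\theta,r)$ and completes the statement.
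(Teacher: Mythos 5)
Your overall route is the one the paper takes: the two preceding lemmas supply the Lie algebroid structure on $\Jet^1L$ and exhibit $D^{\textrm{clas}}$ as a Spencer operator relative to $\pr$, Theorem \ref{t2} integrates this to a multiplicative corank-one distribution $\H$ with the stated uniqueness, and Lemma \ref{lemma-contact-versus-wide} handles the descent to $(\Sigma,\theta,r)$ when $L$ is trivial. The paper itself only asserts that ``$j_D$ an isomorphism implies $\H$ contact'', so your attempt to actually prove the non-degeneracy of $I$ is a genuine addition, and your computation at units is correct: the $T_xM\times T_xM$ block vanishes because $u(M)$ is an integral submanifold, and the off-diagonal block is the evaluation pairing because $j_{\B}$ is the identity of $\hom(TM,L)$.

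The gap is the propagation step, which you flag as the main obstacle but then dispatch with the claim that the restriction of $d^{\nabla}\theta$ to $\H$ is ``determined near the units''. That claim is false, and the linear algebra it is meant to feed breaks down away from units. In the decomposition $\H_g=\sigma_g(T_xM)\oplus\H^s_g$ with $\sigma_g\in\Jet^1_{\H}\Sigma$, the multiplicativity of $I^{\H}$ (Lemma \ref{lemma: delta theta is multiplicative}) identifies the $\H^s_g\times\H^s_g$ block with the $\B\times\B$ block at the unit $1_{t(g)}$, which equals $l([\beta,\beta'])$ and is nonzero in general (it encodes $\Lambda$); meanwhile the $\sigma_g(T_xM)\times\sigma_g(T_xM)$ block is the value $c(\sigma_g)$ of the $1$-cocycle of Lemma \ref{cocycle}, and a $1$-cocycle vanishes at units but not elsewhere. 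So at a general $g$ neither diagonal block is known to vanish, and a skew form with perfect off-diagonal pairing and two arbitrary diagonal blocks can be degenerate: $\left(\begin{smallmatrix} J & I\\ -I & J\end{smallmatrix}\right)$, with $J$ the standard $2\times 2$ symplectic matrix, annihilates every $(x,-Jx)$. Hence your argument at units does not transport as stated. A correct propagation still uses multiplicativity, but of the \emph{radical}: Lemma \ref{lemma: delta theta is multiplicative} shows that $\d m$ carries radicals to radicals and that $\d i$ preserves them; applying this to the pair $(\xi,\d i(\xi))$ over $(g,g^{-1})$ gives $u_{\ast}\d t(\xi)\in\mathrm{rad}(I^{\H}_{1_{t(g)}})=0$ for any $\xi\in\mathrm{rad}(I^{\H}_g)$, and similarly $\d s(\xi)=0$; a radical vector is therefore $s$- and $t$-vertical, i.e.\ of the form $R_g(\beta)$ with $\beta\in\B_{t(g)}$, and pairing it against $\sigma_g(T_xM)$ through the perfect off-diagonal block forces $\beta=0$. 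Without an argument of this kind the contactness of $\H$ --- the one claim the corollary adds to the preceding lemmas --- is not established.
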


\section{Proof of Theorem \ref{t1}}
\label{sec: linearization}

\subsection{Rough idea and some heuristics behind the proof}
\label{Rough idea and some heuristics behind the proof}

Let us briefly indicate the intuition behind our approach (the pseudogroup point of view). The main idea is to reinterpret $\theta$ in terms of bisections: it gives rise to (and it is determined by) a family of $k$-forms $\{ \theta_b: b\in \textrm{Bis}(\G)\}$; here $\theta_b\in \Omega^k(M, E)$ is obtained by pull-backing $\theta$ to $M$ via $b$ and using the action of $\G$ on $E$. In other words, $\theta$ is encoded in the map
\begin{equation} \label{Theta}
\Theta: \textrm{Bis}(\G)\rmap \Omega^k(M, E),\ \ b\mapsto \theta_b ;
\end{equation}
the multiplicativity of $\theta$ translates into a cocycle condition for $\Theta$ on the group $\textrm{Bis}(\G)$. Hence, morally (because we are in infinite dimensions), the infinitesimal counterpart of $\theta$ is encoded in the linearization $\ve(\Theta)$ of $\Theta$ (as in Subsection \ref{1-cocyles and relations to the van Est map}). While $\Gamma(A)$ plays the role of the Lie algebra of $\textrm{Bis}(\G)$ (cf. Remark \ref{right-invariance}), one arrives at $\ve(\Theta)= D$ given in the theorem. However, to prove the theorem, we have to avoid the infinite dimensional problem and work (still in the spirit of Lie pseudogroups) with jet spaces: since $\Theta$ depends only on first order jets of bisections, it can be reinterpreted as a finite dimensional object -- a map
\[ c: \Jet^1\G  \longrightarrow \hom(\wedge^kTM, E),\]
which is a $1$-cocycle for the groupoid $\Jet^1\G$. Hence, instead of applying the integration of cocycles (as in Proposition \ref{prop: Van Est}) to the infinite dimensional $\textrm{Bis}(\G)$, we will apply it to the groupoid $\Jet^1\G$.
Here one encounters a small technical problem: $\Jet^1\G$ may have $s$-fibers which are not simply connected (not even connected), so we will have to pass to the closely related groupoid $\widetilde{\Jet^1\G}$, which has the same Lie algebroid $\Jet^1A$, but which is $s$-simply connected:
\[\tilde{c}: \widetilde{\Jet^1\G} \longrightarrow \hom(\wedge^kTM, E).\]
Then one has to concentrate on the linearization cocycle 
\[ \eta: \Jet^1A \longrightarrow \hom(\wedge^kTM, E),\]
which, together with the decomposition (\ref{J-decomposition}) in mind, is precisely the pair $(D, l)$ consisting of the Spencer operator and its symbol. 
The rest is about working out the details and finding out the precise equations that $c$ and $\eta$ have to satisfy. 

Throughout this section, $\G$ denotes a Lie groupoid over $M$, $E$ is a representation of $\G$, $\Jet^1\G$ denotes the Lie groupoid of $1$-jets of bisections of $\G$. Each one of the next subsections is devoted
to one of the 1-1 correspondences
\[ \theta \longleftrightarrow c \longleftrightarrow \tilde{c} \longleftrightarrow \eta .\]

\subsection{From multiplicative forms to differentiable cocycles}\label{sec: step 1}

Recall that $\lambda: \Jet^1\G \to \GL(TM)$ denotes the canonical representation of $\Jet^1\G$ on $TM$ and $\Ad: \Jet^1\G \to \GL(A)$ denotes the adjoint representation of $\Jet^1\G$ on the Lie algebroid $A$ of $\G$. Combining $\lambda$ with the action of $\G$ on $E$, 
\[ \hom(\wedge^kTM, E)\]
becomes a representation of $\Jet^1\G$. Using the notations from  Subsection \ref{1-cocyles and relations to the van Est map}, we will denoted the associated space of $1$-cocycles by
\[ Z^{1}(\Jet^1\G, \hom(\wedge^kTM, E)).\]
Moreover, as in Remark \ref{when working with jets}, we will view the elements of $\Jet^1\G$ as splittings $\sigma_g: T_{s(g)}\to T_g\G$ of $\d s$. In particular, for $\sigma_g, \sigma_g'\in \Jet^1\G$ sitting above the same $g\in \G$, $\sigma_g- \sigma_g'$ takes values in $Ker(ds)_g$; identifying the last space with $A_{t(g)}$, we consider the resulting map
\[ \sigma_g \ominus \sigma_g':= R_{g^{-1}}\circ (\sigma_g - \sigma'_g): T_{s(g)}M\To A_{t(g)}.\]\\

\begin{prop}\label{prop: multiplicative forms as cocycles}
There is a one-to-one correspondence between multiplicative $k$-forms $\theta \in \Omega^k(\G, t^{\ast}E)$, and pairs $(c, l)$ with 
\[\left\{\begin{aligned}
c\in Z^{1}(\Jet^1\G, \hom(\wedge^kTM, E))\\
l: A \longrightarrow \hom(\wedge^{k-1}TM, E) 
\end{aligned}\right.\]
satisfying the following equations:
\begin{equation}\label{eq: condition 1}\begin{aligned}
c&(\sigma_g)(\lambda_{\sigma_g}v_1, \ldots , \lambda_{\sigma_g}v_k) - c(\sigma_g')(\lambda_{\sigma_g'}v_1, \ldots , \lambda_{\sigma_g'}v_k) = \\
&=\sum_{i = 1}^k(-1)^{i+1}l((\sigma_g \ominus \sigma_g')(v_i))(\lambda_{\sigma_g}v_1,\ldots,\lambda_{\sigma_g}v_{i-1}, \lambda_{\sigma_g'}v_{i+1},\ldots, \lambda_{\sigma_g'}v_k), 
\end{aligned}\end{equation}
\begin{equation}\label{eq: condition 2}
i_{\rho(\al)}l(\be) = -i_{\rho(\be)}l(\al),
\end{equation}
\begin{equation}\label{eq: condition 3}\begin{aligned}
l(\Ad_{\sigma_g}\al)&(\lambda_{\sigma_g}v_1, \ldots , \lambda_{\sigma_g}v_{k-1}) - g\cdot l(\al)(v_1,\ldots,v_{k-1}) = \\
&=c(\sigma_g)(\lambda_{\sigma_g}\rho(\al),\lambda_{\sigma_g}v_1, \ldots , \lambda_{\sigma_g}v_{k-1}), 
\end{aligned}\end{equation}
for all splittings $\sigma_g, \sigma_g'\in \Jet^1\G$, $v_1, \ldots, v_k \in T_{x}M$, and $\al, \be \in A_x$, where $x= s(g)$. 
 \end{prop}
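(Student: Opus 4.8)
The plan is to set up explicit mutually inverse assignments $\theta \mapsto (c,l)$ and $(c,l)\mapsto \theta$, and to show that multiplicativity of $\theta$ (equation \eqref{eq: multiplicative}) is equivalent to the cocycle condition on $c$ together with the three equations \eqref{eq: condition 1}--\eqref{eq: condition 3}. Following Remark \ref{when working with jets}, I would view an element of $\Jet^1\G$ as a splitting $\sigma_g$ of $(\d s)_g$ and define $c$ by evaluating $\theta$ on the horizontal directions determined by $\sigma_g$:
\[ c(\sigma_g)(\lambda_{\sigma_g}v_1, \ldots, \lambda_{\sigma_g}v_k) := \theta_g(\sigma_g(v_1), \ldots, \sigma_g(v_k)) \in E_{t(g)}, \]
for $v_i\in T_{s(g)}M$; since $c(\sigma_g)\in \hom(\wedge^k T_{t(g)}M, E_{t(g)})$, this is an honest differentiable cochain with values in the representation $\hom(\wedge^k TM, E)$. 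For $l$ I would take the symbol from Theorem \ref{t1}, namely $l(\alpha)(v_1,\ldots,v_{k-1})=\theta_{1_x}(\alpha, u_\ast v_1, \ldots, u_\ast v_{k-1})$ for $\alpha\in A_x$.

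First I would show that the cocycle identity $c(\sigma_g\sigma_h)=c(\sigma_g)+\sigma_g\cdot c(\sigma_h)$ is precisely \eqref{eq: multiplicative} evaluated on \emph{horizontal} vectors. Indeed, the product formula \eqref{mult-i-J1} gives $(\sigma_g\sigma_h)(v)=(\d m)(\sigma_g(\lambda_{\sigma_h}v),\sigma_h(v))$, so applying multiplicativity to the composable pairs $(\sigma_g(\lambda_{\sigma_h}v_i),\sigma_h(v_i))$ and using that $\lambda$ is a representation reproduces the cocycle equation verbatim. Thus the content of multiplicativity in the all-horizontal directions is exactly the statement $c\in Z^1(\Jet^1\G,\hom(\wedge^k TM,E))$.

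The heart of the argument is to account for multiplicativity in the remaining directions. Decomposing any $X\in T_g\G$ as $X=\sigma_g(\d s\, X)+R_g(\alpha)$ with $\alpha\in A_{t(g)}$ the right-translated $s$-vertical part, multilinearity reduces everything to the values of $\theta_g$ on configurations with a prescribed number of vertical slots. The key structural input I would establish is: (i) a multiplicative form evaluated on two or more $s$-vertical vectors vanishes, and (ii) on exactly one vertical slot it is computed by $l$, through the \emph{one-vertical formula}
\[ \theta_g(R_g\beta, \sigma_g v_1, \ldots, \sigma_g v_{k-1}) = l(\beta)(\lambda_{\sigma_g}v_1, \ldots, \lambda_{\sigma_g}v_{k-1}), \qquad \beta\in A_{t(g)}. \]
I would derive the one-vertical formula by applying multiplicativity at the pair $(g,1_{s(g)})$, using \eqref{eq: Ad} to write $R_g(\Ad_{\sigma_g}\alpha)=(\d m)(\sigma_g(\rho\alpha),\alpha)$; this computation is exactly equation \eqref{eq: condition 3}, and it simultaneously exhibits \eqref{eq: condition 3} as the equivariance of $l$ relating its values at $s(g)$ and $t(g)$. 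The vanishing (i) reflects that $\theta$ is a \emph{first order} object (it is immediate in the linearized abelian model, where additivity in the group direction forces degree at most one in the vertical variables, and follows in general by the same multiplicativity argument); combining $u^\ast\theta=0$ (a one-line consequence of \eqref{eq: multiplicative} at $(1_x,1_x)$) with the antisymmetry of $\theta$ and the exchange of $s$- and $t$-vertical directions under $\d i$ then yields the antisymmetry \eqref{eq: condition 2}.

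Given (i) and (ii), the inverse map is forced: I would reconstruct $\theta_g(X_1,\ldots,X_k)$ by choosing any splitting $\sigma_g$, writing $X_i=\sigma_g(v_i)+R_g(\alpha_i)$, and setting the value equal to $c(\sigma_g)(\lambda_{\sigma_g}v_1,\ldots,\lambda_{\sigma_g}v_k)$ plus the single-vertical corrections $\sum_i(-1)^{i+1}l(\alpha_i)(\lambda_{\sigma_g}v_1,\ldots,\widehat{\lambda_{\sigma_g}v_i},\ldots,\lambda_{\sigma_g}v_k)$, all terms with two or more verticals being zero. The remaining work is then twofold. The main obstacle is \textbf{well-definedness}: the decomposition depends on the choice of $\sigma_g$, and independence of this choice is exactly equation \eqref{eq: condition 1} — it is the telescoping identity comparing the two horizontal evaluations, whose successive single-vertical differences are governed by $l$ — with \eqref{eq: condition 2} ensuring the corrections are consistently alternating. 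Once $\theta$ is well-defined, I would check multiplicativity by verifying \eqref{eq: multiplicative} on a spanning set of decomposable configurations: by construction the all-horizontal case is the cocycle identity for $c$, the one-vertical case is \eqref{eq: condition 3}, and the higher-vertical cases collapse to $0=0$. Finally, I would observe that the two assignments are visibly inverse to one another, which establishes the claimed bijection.
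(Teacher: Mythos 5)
Your overall strategy coincides with the paper's: the same forward assignment $\theta\mapsto(c,l)$, the cocycle identity as multiplicativity in the all-horizontal directions, \eqref{eq: condition 1} as well-definedness of the reconstruction under change of splitting, and \eqref{eq: condition 3} extracted from the formula \eqref{eq: Ad} for the adjoint action. But there is one concrete false step that breaks the reconstruction for $k\geq 2$: your claim (i), that a multiplicative form vanishes when two or more arguments are $s$-vertical. The correct vertical reduction (Lemma \ref{lemma: theta vertical} in the paper) is
\[\theta_g(\al_g, X_1,\ldots,X_{k-1}) = \theta_{t(g)}\bigl(R_{g^{-1}}\al_g, (\d t)_g X_1,\ldots,(\d t)_g X_{k-1}\bigr),\]
obtained from \eqref{eq: multiplicative} at the pair $(1_{t(g)}, g)$ by writing $\al_g=(\d m)(R_{g^{-1}}\al_g, 0_g)$ and $X_i=(\d m)((\d t)X_i, X_i)$. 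Iterating it, a $p$-fold vertical evaluation gives
\[\theta_g\bigl(R_g\al_1,\ldots,R_g\al_p,\sigma_g(v_{p+1}),\ldots,\sigma_g(v_k)\bigr) = l(\al_1)\bigl(\rho(\al_2),\ldots,\rho(\al_p),\lambda_{\sigma_g}v_{p+1},\ldots,\lambda_{\sigma_g}v_k\bigr),\]
which is nonzero as soon as the anchor is (for a symplectic groupoid one gets $\theta_{1_x}(\al,\be)=\al(\pi^{\sharp}\be)$). Your claim holds in the abelian/linear model only because the anchor vanishes there, so that intuition does not transfer. In fact condition \eqref{eq: condition 2} is precisely the antisymmetry of the two-vertical value $\theta_{1_x}(\al,\be,\ldots)=l(\al)(\rho(\be),\ldots)$; under your claim (i) it would read $0=0$ and carry no information.

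Consequently your reconstruction formula, which keeps only the all-horizontal term and the single-vertical corrections, does not invert the forward map: starting from a multiplicative $\theta$ with $\theta(R_g\al,R_g\be)\neq 0$ it returns a different form, and the resulting $\theta$ need not be multiplicative. The paper's formula instead sums over all $(p,q)$-shuffles with $p\geq 1$ of terms $l(\al_{X_{\tau(1)}})(\rho(\al_{X_{\tau(2)}}),\ldots,\rho(\al_{X_{\tau(p)}}),\lambda_{\sigma_g}v_{\tau(p+1)},\ldots)$; for $k=2$ the missing term is $l(\al_{X_1})(\rho(\al_{X_2}))$, and these higher-vertical terms are exactly what make the multiplicativity check close up (the ``Type 1'' and ``Type 2'' terms in the paper's verification). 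A secondary inaccuracy: the one-vertical formula is not obtained from multiplicativity at the pair $(g,1_{s(g)})$ --- that pairing, combined with \eqref{eq: Ad}, is what yields \eqref{eq: condition 3} --- but from the pair $(1_{t(g)},g)$ as above. For $k=1$ none of this arises and your argument is essentially the paper's.
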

 
 
 For the proof of the proposition we will need the following lemma:
 \begin{lemma}\label{lemma: theta vertical}\rm
For any multiplicative form $\theta \in \Omega^k(\G,t^{\ast}E)$, and any $\al_g \in \ker (\d s)_g$, we have that
\begin{equation}\label{eq: theta vertical 1}
\theta_g(\al_g, X_1, \ldots, X_{k-1}) = \theta_{t(g)}(\al_{t(g)},(\d t)_g(X_1), \ldots, (\d t)_g(X_{k-1})),
\end{equation}
for all $X_1, \ldots, X_{k-1} \in T_g\G$, where $\al_{t(g)} = R_{g^{-1}}(\al_g)$.
 \end{lemma}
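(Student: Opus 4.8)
The plan is to read off the identity directly from the multiplicativity equation~(\ref{eq: multiplicative}) evaluated at the composable pair $(1_y, g)$, where $y := t(g)$, so that $1_y\cdot g = g$. The whole point is that when one feeds suitable tangent vectors into~(\ref{eq: multiplicative}) at this pair, the vertical first slot and the $\pr_2$-term conspire so that only the value of $\theta$ at the unit $1_y$ survives on the right-hand side.

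First I would record two elementary descriptions of preimages under $\d m_{(1_y, g)}$. Set $\al_y := \al_{t(g)} = R_{g^{-1}}(\al_g) \in A_y$, so that $\al_g = R_g(\al_y)$. The pair $(\al_y, 0_g)$ lies in $T_{(1_y,g)}\G_2$ because $\d s(\al_y) = 0 = \d t(0_g)$, and since right multiplication by $g$ is the right translation $R_g$, one has $\d m(\al_y, 0_g) = R_g(\al_y) = \al_g$. For the remaining arguments I would use the section $j\colon \G \rmap \G_2$, $j(h) = (u(t(h)), h)$; it is well defined since $s(u(t(h))) = t(h)$, and it satisfies $m\circ j = \mathrm{id}_{\G}$ because $1_{t(h)}\cdot h = h$. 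Differentiating gives $\d m(\d u(\d t(X)), X) = X$ for every $X\in T_g\G$, and $(\d u(\d t(X)), X)$ indeed lies in $T_{(1_y,g)}\G_2$ because $\d s\circ \d u = \mathrm{id}$.

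With these preimages I would substitute into~(\ref{eq: multiplicative}) at $(1_y, g)$, putting $(\al_y, 0_g)$ into the first slot and $(\d u(\d t(X_i)), X_i)$ into the remaining $k-1$ slots. The left-hand side is then exactly $\theta_g(\al_g, X_1, \dots, X_{k-1})$. On the right, the $\pr_2$-term is $1_y\cdot \theta_g(0_g, X_1, \dots, X_{k-1})$, which vanishes because $\theta_g$ is multilinear with a zero in its first argument; and the $\pr_1$-term is $\theta_{1_y}(\al_y, \d u(\d t(X_1)), \dots, \d u(\d t(X_{k-1})))$. Identifying $T_yM$ with its image in $T_{1_y}\G$ under $\d u$, this is precisely $\theta_{t(g)}(\al_{t(g)}, \d t(X_1), \dots, \d t(X_{k-1}))$, which is the assertion~(\ref{eq: theta vertical 1}).

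I do not expect a genuine obstacle here, since the argument is a short bookkeeping computation resting on multiplicativity alone. The only points needing care are the verification that the two chosen families of vectors actually lie in $T\G_2$ and map correctly under $\d m$, and the final identification at the unit, namely keeping straight that $\theta_{t(g)}$ denotes $\theta$ at the unit $1_y$ and that each $\d t(X_i)\in T_yM$ is inserted through the embedding $\d u$. One should also note that both sides take values in the same fibre $E_y$, which is automatic since $t(gh) = t(g)$ and $1_y$ acts as the identity on $E_y$.
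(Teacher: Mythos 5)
Your proof is correct and follows essentially the same route as the paper's: both decompose $\al_g$ as $\d m(\al_{t(g)},0_g)$ and $X_i$ as $\d m(\d u(\d t(X_i)),X_i)$ at the composable pair $(1_{t(g)},g)$ and then apply the multiplicativity equation, with the $\pr_2$-term dying because of the zero in the first slot. Your version merely spells out the membership in $T\G_2$ and the identification via $\d u$ more explicitly than the paper does.
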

 
 \begin{proof}
Notice that we can express $\al_g$ and $X_i$ as 
\[\al_g = (\d R_g)_{t(g)}(\al_{t(g)}) = (\d m)_{(t(g),g)}(\al_{t(g)}, 0_g), \  X_i = (\d m)_{(t(g),g)}((\d t)_g(X_i), X_i).\]
Equation \eqref{eq: theta vertical 1} then follows from the multiplicativity equation (\ref{eq: multiplicative}) on the vectors (tangent to $\G_2$) $(\al_{t(g)}, 0_g)$, $((\d t)_g(X_1), X_1), \ldots$. 
 \end{proof}\\

\begin{proof}[Proof of proposition \ref{prop: multiplicative forms as cocycles}]
In one direction, given $\theta \in \Omega^k(\G, t^{\ast}E)$, 
\[c(\sigma_g)(w_1, \ldots, w_k) = \theta_g(\sigma_g(\lambda_{\sigma_g}^{-1}(w_1)), \ldots, \sigma_g(\lambda_{\sigma_g}^{-1}(w_k))), \]
 for all $\sigma_g\in \Jet^1\G$ and $w_1, \ldots, w_k \in T_{t(g)}M$, and 
\[l(\al)(v_1, \ldots, v_{k-1}) = \theta_x(\al, v_1, \ldots, v_{k-1}),\]
for all $\al \in A_x$, and $v_1, \ldots, v_{k-1}\in T_xM$. The desired equations for $(c,l)$ will be proven for $k= 2$, which reveals all the necessary
arguments but keeps the notations simpler. Note that in this case Lemma \ref{lemma: theta vertical} translates into
\begin{equation}\label{eq: theta vertical}
\theta_g(\al_g, X) = l(\al_{t(g)})((\d t)_g(X)),
\end{equation}
for all $X \in T_g\G$, $\al_g \in \ker (\d s)_g$, where $\al_{t(g)} = R_{g^{-1}}(\al_g)$. To prove \eqref{eq: condition 1} (for $k= 2$), using the definition of $c$, we find that the left hand side of the
equation is 
\[\begin{aligned}
\theta_g(\sigma_g(v_1), & \sigma_g(v_2)) - \theta_g(\sigma_g'(v_1),\sigma_g'(v_2)) = \\
 & =\theta_g(\sigma_g(v_1) - \sigma'_g(v_1),  \sigma_g(v_2)) + \theta_g(\sigma_g'(v_1), \sigma_g(v_2) - \sigma_g'(v_2)).
\end{aligned}\]
Applying \eqref{eq: theta vertical} with $\al_{g}= \sigma_g(v_i) - \sigma_g'(v_i)$, $i\in \{1, 2\}$, we obtain \eqref{eq: condition 1}. The same \eqref{eq: theta vertical}, combined
with the skew symmetry of $\theta$ gives \eqref{eq: condition 2}:
\[l(\al)(\rho(\be)) = \theta(\al,\be) = -\theta(\be, \al) = -l(\be)(\rho(\al)).\]

Next we prove \eqref{eq: condition 3}. Using  the formula \eqref{eq: Ad} for the adjoint representation,
\[
l(\Ad_{\sigma_g}\al)(\lambda_{\sigma_g}v) = 
\theta_{t(g)}(R_{g^-1} \circ (\d m)_{(g, s(g))}(\sigma_g(\rho(\al)), \al),\lambda_{\sigma_g}v),\]
Using again the equation \eqref{eq: theta vertical}, the last expression is 
\[\theta_g((\d m)_{(g, s(g))}(\sigma_g(\rho(\al)), \al),\sigma_g(v)).\] 
Combining with $\sigma_g(v) = (\d m)_{(g,s(g))}(\sigma_g(v), v)$ and then applying the multiplicativity equation for $\theta$, we arrive at the
right hand side of \eqref{eq: condition 3}.

We are left with proving the cocycle equation $\delta c = 0$. Let $(\sigma_g, \sigma_h) \in (\Jet^1\G)_2$ be a pair of composable arrows. Then $\delta c(\sigma_g, \sigma_h)$ is the map $\wedge^2T_{t(g)}M \to E_{t(g)}$,
\[ \delta c(\sigma_g, \sigma_h)(w_1, w_2) = c(\sigma_g)(w_1, w_2)  +  g\cdot(c(\sigma_h)(\lambda_{\sigma_g}^{-1}w_1,  \lambda_{\sigma_g}^{-1}w_2)) -  c(\sigma_g\cdot\sigma_h)(w_1, w_2).\]
Let $v_i = \lambda_{\sigma_g}^{-1}w_i \in T_{t(h)}M$. For the sum of the first two terms in the right hand side, after applying the definition of $c$, we find
\[\theta_g(\sigma_g(v_1), \sigma_g(v_2))+  g\cdot\theta_h(\sigma_h(\lambda_{\sigma_h}^{-1}v_1),  \sigma_h(\lambda_{\sigma_h}^{-1}v_2)).\]
For the last term, using the description (\ref{mult-i-J1}) for $\sigma_g\cdot\sigma_h$, we find 
\[ \theta_{gh}((\d m)_{(g,h)}(\sigma_g(v_1),\sigma_h(\lambda_{\sigma_h}^{-1}v_1)),  (\d m)_{(g,h)}(\sigma_g(v_2),\sigma_h(\lambda_{\sigma_h}^{-1}v_2)).\]
Finally, the multiplicativity of $\theta$ implies that the last two expressions coincide. 


For the reverse direction, let $c$ and $l$ be given, and we construct $\theta$. In order to avoid clumsier notations, we extend $l$ to the entire
$\ker \d s$: for $\alpha_g\in \ker(\d s)_g$:
\[ l(\alpha_g):= l(R_{g^{-1}}\alpha_g).\]
Given $g$, choose $\sigma_g \in \Jet^1\G$ and use it to split a vector $X \in T_g\G$ into
\[X = \sigma_g(v) + \al_X,\]
where $v = (\d s)_g(X) \in T_{s(g)}M$, and $\al_X = X - \sigma_g(v) \in \ker(\d s)_g$. 
We define
\[\begin{aligned}
&\theta_g(X_1, \ldots, X_k) = c(\sigma_g)(\lambda_{\sigma_g}v_1, \ldots \lambda_{\sigma_g}v_k) + \\
&+ \sum_{p+q=k}\sum_{\tau \in S(p,q)}(-1)^{|\tau|}l(\al_{X_{\tau(1)}})(\rho(\al_{X_{\tau(2)}}),\ldots, \rho(\al_{X_{\tau(p)}}), \lambda_{\sigma_g}v_{\tau(p+1)}, \ldots, \lambda_{\sigma_g}v_{\tau(k)}),
\end{aligned}\]
where $p \geq 1$, and the second summation is taken over all $(p,q)$-shuffles of $\set{1,\ldots,k}$. The rest of the proof will be given again in the case $k= 2$, when the previous formula becomes
\[\begin{aligned}
\theta_g(X_1, X_2) = c(\sigma_g)&(\lambda_{\sigma_g}v_1,\lambda_{\sigma_g}v_2)  - l(\al_{X_2})(\lambda_{\sigma_g}v_1) +\\
&+ l(\al_{X_1})(\lambda_{\sigma_g}v_2) + l(\al_{X_1})(\rho(\al_{X_2})).
\end{aligned}\]
Note that \eqref{eq: condition 2} implies that $\theta_g$ is skew-symmetric. Let us show that it does not depend on the choice of $\sigma_g$.  
Choose another splitting $\sigma_g'$ and write $\al'_X = X-\sigma_g'(v)$. Let $\theta'_g$ be the form obtained by using the splitting $\sigma_g'$. Then
\[\begin{aligned}
(\theta_g - \theta_g')(X_1, X_2) =  c(&\sigma_g)(\lambda_{\sigma_g}v_1, \lambda_{\sigma_g}v_2) - c(\sigma_g')(\lambda_{\sigma_g'}v_1, \lambda_{\sigma_g'}v_2) +\\
& - l(\al_{X_2})(\lambda_{\sigma_g}v_1) + l(\al'_{X_2})(\lambda_{\sigma_g'}v_1)\\
&+ l(\al_{X_1})(\lambda_{\sigma_g}v_2) - l(\al'_{X_1})(\lambda_{\sigma'_g}v_2) + \\
& + l(\al_{X_1})(\rho(\al_{X_2})) - l(\al'_{X_1})(\rho(\al'_{X_2})).
\end{aligned}\]

Let us denote by $\al_{v_i} = \sigma_g(v_i) - \sigma'_g(v_i)$ and notice that 
\begin{equation}\label{eq: alpha - alpha'}
\al_{X_i} - \al'_{X_i} = - \al_{v_i}, \text{ and } \lambda_{\sigma_g}v_i - \lambda_{\sigma'_g}v_i = \rho(\al_v).
\end{equation} 
By using \eqref{eq: condition 1} and the polarization formula for $\theta$, it follows that
\[\begin{aligned}
(\theta_g - \theta_g')(X_1, X_2) &=  l(\al_{v_1})(\lambda_{\sigma_g}v_2) - l(\al_{v_2})(\lambda_{\sigma_g'}v_1) +\\
&\quad + l(\al_{v_2})(\lambda_{\sigma_g}v_1) - l(\al'_{v_2})(\rho(\al_{v_1}))\\
&\quad- l(\al_{v_1})(\lambda_{\sigma_g}v_2) + l(\al'_{X_1})(\rho(\al_{v_2})) + \\
&\quad - l(\al_{v_1})(\rho(\al_{X_2})) - l(\al'_{X_1})(\rho(\al_{v_2})).
\end{aligned}\]

Thus, if we substitute into this expression the consequence
\[l(\al_{v_2})(\lambda_{\sigma_g'}v_1) = l(\al_{v_2})(\lambda_{\sigma_g}v_1) - l(\al_{v_2})(\rho(\al_{v_1}))\]
of \eqref{eq: alpha - alpha'}, almost all of the terms cancel out two-by-two and we are left with
\[\begin{aligned}
(\theta_g - \theta_g')(X_1, X_2) &= l(\al_{v_2})(\rho(\al_{v_1})) - l(\al'_{X_2})(\rho(\al_{v_1})) - l(\al_{v_1})\rho(\al_{X_2})\\
&= l(\al_{v_2} - \al'_{X_2})(\rho(\al_{v_1})) - l(\al_{v_1})(\rho(\al_{X_2}))\\
&= -l(\al_{X_2})(\rho(\al_{v_1})) - l(\al_{v_1})(\rho(\al_{X_2})).
\end{aligned}\]
Because of \eqref{eq: condition 2}, this expression vanishes and $\theta$ is well defined. 

We are left with the verification that $\theta$ is multiplicative. Let $g,h \in \G$ be composable arrows, and let $(X_i,Y_i) \in T_{(g,h)}\G_2$ so that $(\d t)_h(Y_i) = (\d s)_g(X_i)$. We fix splittings $\sigma_g,\sigma_h \in \Jet^1\G$ and use them to write
\[X_i = \al_i + \sigma_g(v_i), \text{ and } Y_i = \be_i +\sigma_h(w_i).\]
From $(X_i,Y_i) \in T_{(g,h)}\G_2$ it follows that $v_i = \rho(\be_i) + \lambda_{\sigma_h}(w_i)$, hence
\[X_i = \al_i +\sigma_g(\rho(\beta_i))+ \sigma_g(\lambda_{\sigma_h}w_i).\]
Decomposing
\[\d m(X_i,Y_i) = \d m(\al_i,0) + \d m(\sigma_g(\rho(\beta_i)), \be_i)+ \d m(\sigma_g(\lambda_{\sigma_h}w_i) ,\sigma_h(w_i)).\]
$m^{\ast}\theta_{(g,h)}((X_1,Y_1),  (X_2, Y_2))$ gives six types of terms (where $1\leq i \neq j \leq2$):
\begin{description}
\item[Type 1:]  $\theta_{gh}(\d m(\al_1,0), \d m(\al_2, 0)),$
\item[Type 2:]  $\theta_{gh}(\d m(\al_i,0),\d m(\sigma_g(\rho(\beta_j)), \be_j)),$
\item[Type 3:]   $\theta_{gh}(\d m(\al_i,0),  \d m(\sigma_g(\lambda_{\sigma_h}w_j) ,\sigma_h(w_j))),$
\item[Type 4:]  $\theta_{gh}(\d m(\sigma_g(\rho(\beta_1)), \be_1),\d m(\sigma_g(\rho(\beta_2)), \be_2)),$
\item[Type 5:]    $\theta_{gh}(\d m(\sigma_g(\rho(\beta_i)), \be_i),  \d m(\sigma_g(\lambda_{\sigma_h}w_j) ,\sigma_h(w_j)))$
\item[Type 6:]    $\theta_{gh}(\d m(\sigma_g(\lambda_{\sigma_h}w_1) ,\sigma_h(w_1)),  \d m(\sigma_g(\lambda_{\sigma_h}w_2) ,\sigma_h(w_2)))$
\end{description}

In order to simplify the terms of type 1,2, and 3, we note that $(\d m)_{(g,h)}(\al_g,0_h) = R_h(\al_g)$ for all $\alpha_g\in \ker(ds)_g$ and thus, by the definition of $\theta$
\[\theta_{gh}(\d m(\al_1,0), \d m(\al_2, 0)) = l(\al_1)(\rho(\al_2)),\]\
\[\theta_{gh}(\d m(\al_i,0),\d m(\sigma_g(\rho(\beta_j)), \be_j)) = l(\al_i)(\lambda_{\sigma_g}\rho(\be_j)), \]\
\[\theta_{gh}(\d m(\al_i,0),  \d m(\sigma_g(\lambda_{\sigma_h}w_j) ,\sigma_h(w_j))) = l(\al_i)(\lambda_{\sigma_g\sigma_h}w_j).\]\

On the other hand, using again the formula \eqref{eq: Ad} for the adjoint action, as well as condition \eqref{eq: condition 3}, we simplify the terms of type 4 and 5 into
\[\begin{aligned}
\theta_{gh}(\d m(\sigma_g(\rho(\beta_1)), \be_1),&\d m(\sigma_g(\rho(\beta_2)), \be_2)) = l(\Ad_{\sigma_g}\be_1)(\lambda_{\sigma_g}(\rho(\be_2))) \\
&=c(\sigma_g)(\lambda_{\sigma_g}\rho(\be_1), \lambda_{\sigma_g}\rho(\be_2)) +g\cdot l(\be_1)(\rho(\be_2))
\end{aligned}\]\
\[\begin{aligned}
\theta_{gh}(\d m(\sigma_g(\rho(\beta_i)), \be_i), & \d m(\sigma_g(\lambda_{\sigma_h}w_j) ,\sigma_h(w_j))) = l(\Ad_{\sigma_g}\be_i)(\lambda_{\sigma_g\sigma_h}w_j) \\
&= c(\sigma_g)(\lambda_{\sigma_g}\rho(\be_i),\lambda_{\sigma_g\sigma_h}w_j) + g\cdot l(\be_i)(\lambda_{\sigma_h}w_j).
\end{aligned}\]

Finally, we use condition \eqref{eq: condition 2} to express
\[\begin{aligned}
\theta_{gh}(\d m(\sigma_g(\lambda_{\sigma_h}w_1) ,\sigma_h(w_1)), & \d m(\sigma_g(\lambda_{\sigma_h}w_2) ,\sigma_h(w_2))) = \\&=\theta_g(\sigma_g(\lambda_{\sigma_h}w_1),\sigma_g(\lambda_{\sigma_h}w_2)) + g\cdot \theta_h(\sigma_h(w_1),\sigma_h(w_2)).
\end{aligned}\]
Adding everything up, we recognize $\theta_g(X_1,X_2) +g\cdot \theta_h(Y_1,Y_2)$, 
thus concluding the proof of the proposition.
\end{proof}


\subsection{Realizing source-simply connectedness}\label{sec: step 2}

As mentioned at the beginning of the section, we need to pass from $\Jet^1\G$ to $\widetilde{\Jet^1\G}$, the source simply connected Lie groupoid with the same Lie algebroid $\Jet^1A$ as $\Jet^1\G$. 
For an explicit construction of $\widetilde{\Jet^1\G}$, one puts together the universal covers of the $s$-fibers of $\Jet^1\G$ with base points the units (see e.g. \cite{CrainicFernandes} for the general discussion). It comes equipped with a groupoid map
\[p: \widetilde{\Jet^1\G} \To \Jet^1\G,\]
whose image is the subgroupoid $(\Jet^1\G)^{0}$ made of the connected component of the identities in $\Jet^1\G$. For elements in $\widetilde{\Jet^1\G}$ we will use the notation 
$\sigma_g$ whenever we want to indicate the point $g\in \G$ onto which $\sigma_g$ projects. For $X\in T_{s(g)}\G$ we will use the notation
\[ \sigma_g(X):= p(\sigma_g)(X)\]
and, for $\sigma_g, \sigma_g'\in \widetilde{\Jet^1\G}$, consider
\[ \sigma_{g}\ominus \sigma_g':= p(\sigma_g)\ominus p(\sigma_g'): T_{s(g)}M\To A_{t(g)}.\]
We will use the map $p$ to pull-back structures from $\Jet^1\G$ to $\widetilde{\Jet^1\G}$. 
For instance, any representation of $\Jet^1\G$ can also be seen as a representation of $\widetilde{\Jet^1\G}$
and there is an induced pull-back map at the level of the resulting cocycles. In particular, we will consider 
\begin{equation}
\label{p-ast}
p^{\ast}: Z^1(\Jet^1\G,\hom(\wedge^kTM,E)) \to Z^{1}(\widetilde{\Jet^1\G}, \hom(\wedge^kTM,E)).
\end{equation}
It is clear that the pairs $(c, l)$ of Proposition \ref{prop: multiplicative forms as cocycles}, and the equations that they satisfy, have an analogue to with $\Jet^1\G$ replaced by
$\widetilde{\Jet^1\G}$, giving rise to pairs $(\tilde{c}, l)$ satisfying identical equations.

\begin{prop}\label{corol: passage to cover}
Let $\G$ be an $s$-simply connected Lie groupoid. Then $(c, l) \mapsto (p^{\ast}(c), l)$ defines a 
1-1 correspondence between pairs $(c, l)$ satisfying the conditions from Proposition \ref{prop: multiplicative forms as cocycles}
and pairs $(\tilde{c}, l)$ consisting of 
\[\left\{\begin{aligned}
\tilde{c}\in Z^{1}(\widetilde{\Jet^1\G}, \hom(\wedge^kTM, E))\\
l: A \longrightarrow \hom(\wedge^{k-1}TM, E) 
\end{aligned}\right.\]
satisfying the conditions from Proposition \ref{prop: multiplicative forms as cocycles} but with $\Jet^1\G$ replaced by $\widetilde{\Jet^1\G}$. 
\end{prop}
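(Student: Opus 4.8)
The plan is to produce the inverse of $p^{\ast}$ by a ``descend, then extend'' procedure, the whole argument being driven by condition \eqref{eq: condition 1}. I would first isolate the following observation as a lemma: \emph{for any pair $(\tilde c, l)$ satisfying the conditions on $\widetilde{\Jet^1\G}$, the value $\tilde c(\sigma_g)$ depends only on the underlying jet $p(\sigma_g)$.} Indeed, if $\sigma_g, \sigma'_g \in \widetilde{\Jet^1\G}$ have $p(\sigma_g)= p(\sigma'_g)$, then $\lambda_{\sigma_g}= \lambda_{\sigma'_g}$ and $\sigma_g \ominus \sigma'_g = 0$, so \eqref{eq: condition 1} collapses to $\tilde c(\sigma_g)= \tilde c(\sigma'_g)$. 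The same computation yields a rigidity statement that I will use repeatedly: once $l$ and the value of a cocycle at one jet over a fixed $g$ are known, \eqref{eq: condition 1} determines its value at every other jet over $g$.

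From the lemma, the descent is immediate. Every $\kappa \in \ker(p)$ projects to a unit $1_x$ of $\Jet^1\G$, so $p(\kappa)= p(\tilde 1_x)$ and hence $\tilde c(\kappa)= \tilde c(\tilde 1_x)= 0$, since a cocycle vanishes on units. Thus $\tilde c$ is constant along the fibres of the fibrewise universal cover $p$ and, $p$ being an \'etale surjective morphism onto the open subgroupoid $(\Jet^1\G)^{0}= \Im(p)$, it descends to a unique smooth cocycle $c^{0}\in Z^1((\Jet^1\G)^{0}, \hom(\wedge^k TM, E))$ with $p^{\ast}c^{0}= \tilde c$; the pair $(c^{0}, l)$ inherits all the conditions of Proposition \ref{prop: multiplicative forms as cocycles}.

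Injectivity of the correspondence is then a formal consequence of the rigidity statement, provided $\pr\colon (\Jet^1\G)^{0}\to \G$ is onto. This is where $s$-simple connectedness of $\G$ enters: the $s$-fibre $\G_x$ is connected and simply connected, so over it the connected components of the fibres of $\pr\colon (\Jet^1\G)_x\to \G_x$ assemble into a trivial covering; consequently the component of $(\Jet^1\G)_x$ containing $1_x$ meets every fibre, i.e. $\pr((\Jet^1\G)^{0})= \G$. Hence every $g$ carries a reference jet in $(\Jet^1\G)^{0}$, and by rigidity a pair $(c, l)$ on $\Jet^1\G$ is recovered from its restriction $(p^{\ast}c, l)$; this proves injectivity and, simultaneously, tells me how to extend $c^{0}$: for $\sigma_g$ in any component, choose $\sigma^{0}_g\in (\Jet^1\G)^{0}$ over the same $g$ and \emph{define} $c(\sigma_g)$ by \eqref{eq: condition 1}.

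The hard part will be to verify that this extension is legitimate. Independence of the reference jet $\sigma^{0}_g$ follows from the fact that $(c^{0}, l)$ already satisfies \eqref{eq: condition 1} on $(\Jet^1\G)^{0}$ together with the additivity of $\ominus$; smoothness is clear since the reference jet can be chosen smoothly and locally. The genuinely multiplicative points --- that the extended $c$ still satisfies the cocycle identity $\delta c = 0$ and the adjoint equation \eqref{eq: condition 3} --- I would settle by reduction to the identity component: rewriting $\delta c(\sigma_g, \sigma_h)$ and \eqref{eq: condition 3} through \eqref{eq: condition 1} in terms of reference jets over $g$, $h$ and $gh$, the multilinear $l$-corrections cancel (using \eqref{eq: condition 2} and \eqref{eq: condition 3}), so that these identities reduce to the corresponding ones for $c^{0}$, which hold because $(\Jet^1\G)^{0}$ is a subgroupoid. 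Since $p^{\ast}c = \tilde c$ by construction, the maps $(c,l)\mapsto (p^{\ast}c, l)$ and ``descend--extend'' are mutually inverse, proving the stated $1$--$1$ correspondence.
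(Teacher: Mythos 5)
Your proposal follows essentially the same route as the paper: descend $\tilde c$ to the identity\hbox{-}component subgroupoid $(\Jet^1\G)^0$ (using that \eqref{eq: condition 1} forces $\tilde c$ to be constant on the fibres of $p$), use $s$-connectedness of $\G$ to place a reference jet of $(\Jet^1\G)^0$ over every arrow of $\G$, and then let \eqref{eq: condition 1} act as the rigidity mechanism that simultaneously gives injectivity and the recipe for extending $c$ from $(\Jet^1\G)^0$ to all of $\Jet^1\G$. You are in fact more explicit than the paper about the extension step and the need to re-verify $\delta c=0$ and \eqref{eq: condition 3} for the extended cocycle; the paper compresses this into ``it follows that $\tilde c$ induces a map $c$'' plus ``a simple verification''.

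The one step whose justification does not hold up is your argument that $\pr\colon (\Jet^1\G)^0\to\G$ is onto. You claim that the connected components of the fibres of $\pr\colon (\Jet^1\G)_x\to\G_x$ ``assemble into a trivial covering''. But the fibre of $\pr$ over $g$ is the set of splittings $\sigma$ of $(\d s)_g$ with $\lambda_\sigma=\d t\circ\sigma$ invertible, i.e.\ an \emph{open} subset of an affine space cut out by a nondegeneracy condition; it need not be connected, and for a mere surjective submersion the $\pi_0$ of the fibres does not in general form a covering space of the base (the number of components can jump). So the premise of your covering argument is unjustified, and the appeal to simple connectedness is a red herring. The conclusion is nevertheless true and is what the paper proves, by the standard path-lifting property of surjective submersions: lift a path in $s^{-1}(x)$ from $1_x$ to $g$ along $(\Jet^1\G)^0\to\G$, piecing together local sections through prescribed points; this uses only $s$-\emph{connectedness}. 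With that substitution your argument goes through.
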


\begin{proof}
Of course, the statement is about $c\mapsto p^{\ast}c$, i.e. we can fix $l$. We begin by showing that $p^{\ast}$ is injective when restricted to the set of $c$ for which \eqref{eq: condition 1} holds. To do so we first show that $c$  is determined by its value on the the Lie groupoid $(\Jet^1\G)^0$ whose $s$-fibers are the connected component of the identity in the $s$-fibers of $\Jet^1\G$. Observe that for any $g \in \G$, there exists $\sigma_g \in (\Jet^1\G)^0$. In fact, since $(\Jet^1\G)^0 \to \G$ is a submersion, and $\G$ is $s$-connected, we can lift any path in $s^{-1}(s(g))$, starting at the identity and ending at $g$, to a path in $(\Jet^1\G)^0$ starting at the identity and ending over $g$. The corresponding end point is an element $\sigma_g \in (\Jet^1\G)^0$. It follows from \eqref{eq: condition 1} that for any other $\sigma'_g \in \Jet^1\G$, $c(\sigma'_g)$ is determined by $c(\sigma_g)$, and $l$.  

Next, we note that if $p^{\ast}c = p^{\ast}c'$, then $c$ and $c'$ coincide on $(\Jet^1\G)^0$. In fact, for any $\sigma_g \in (\Jet^1\G)^0$ we can find a path inside the $s$-fiber of $\sigma_g$, joining the identity $(\d u)_{s(g)}$ of $(\Jet^1\G)^0$, and $\sigma_g$. Taking its homotopy class, this path gives rise to an element $\xi_g$ of $\widetilde{\Jet^1\G}$ which projects to $\sigma_g$. But then,
\[c(\sigma_g) = c(p(\xi_g)) = c'(p(\xi_g)) = c'(\sigma_g).\]

Finally, we prove that if $(\tilde{c},l)$ satisfies \eqref{eq: condition 1}, then $\tilde{c}$ lies in the image of $p^{\ast}$. For this, note that if $p(\xi_g) = p(\xi'_g)$, then the actions of $\xi_g$ and $\xi'_g$ on $TM$ coincide. Moreover, they induce the same splittings of $(\d s)_g$. Thus, the right hand side of \eqref{eq: condition 1} vanishes, which implies that $\tilde{c}(\xi_g) = \tilde{c}(\xi'_g)$. It follows that $\tilde{c}$ induces a map $c: \Jet^1\G \To t^{\ast}\hom(\wedge^kTM,E)$ such that $p^{\ast}c = \tilde{c}$.

Thus, we have just proven that $p^{\ast}$ determines a one-to-one correspondence between $c \in Z^1(\Jet^1\G,\hom(\wedge^kTM,E))$ such that $(c,l)$ satisfies \eqref{eq: condition 1}, and $\tilde{c} \in Z^{1}(\widetilde{\Jet^1\G}, \hom(\wedge^kTM,E))$ such that $(\tilde{c},l)$ satisfy \eqref{eq: condition 1}. A simple verification shows that $(c,l)$ satisfies \eqref{eq: condition 2} and \eqref{eq: condition 3} if and only if $(\tilde{c},l)$ satisfies \eqref{eq: condition 2} and \eqref{eq: condition 3}. This concludes the proof.
\end{proof}

\subsection{Passing to algebroid cocycles}\label{sec: algebroid cocycles}


\begin{prop}\label{prop: algebroid cocycles}
Let $\G$ be $s$-simply connected. Then there is a one-to-one correspondence between pairs $(\tilde{c}, l)$ as in Proposition \ref{corol: passage to cover} 
and pairs $(\eta,l)$ with
\[\left\{\begin{aligned}
\eta \in Z^1(\Jet^1A, \hom(\wedge^kTM, E))\\
l: A \longrightarrow \hom(\wedge^{k-1}TM, E) 
\end{aligned}\right.\]
 satisfying the equations:
\begin{equation}\label{eq: condition 1''}
\eta(\d f \otimes \al) = \d f \wedge l(\al),
\end{equation}
\begin{equation}\label{eq: condition 2''}
i_{\rho(\al)}l(\be) = - i_{\rho(\be)}l(\al),
\end{equation}
\begin{equation}\label{eq: condition 3''}
l([\al,\be]) - \Lie_{\al}l(\be) = i_{\rho(\al)}\eta(\jet^1\be),
\end{equation}
for all $\al, \be \in \Gamma(A)$, and all $f \in \mathrm{C}^{\infty}(M)$.
\end{prop}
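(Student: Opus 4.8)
The plan is to realize this last correspondence as a single application of the van Est isomorphism. The groupoid $\widetilde{\Jet^1\G}$ is $s$-simply connected by construction and has Lie algebroid $\Jet^1A$, while $\hom(\wedge^kTM,E)$ is a representation of both (built from $\lambda$ on $TM$ and the $\G$-action on $E$; infinitesimally, from $\ad$ on $TM$ and $\nabla$ on $E$). Hence Proposition \ref{prop: Van Est}, applied to $\widetilde{\Jet^1\G}$, gives a bijection
\[ \ve: Z^1(\widetilde{\Jet^1\G}, \hom(\wedge^kTM,E)) \stackrel{\sim}{\To} Z^1(\Jet^1A, \hom(\wedge^kTM,E)), \]
and, keeping $l$ fixed, I set $\eta := \ve(\tilde c)$. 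The cocycle bijection is thus free of charge; the whole content of the proposition is that under $\ve$ the finite conditions of Proposition \ref{prop: multiplicative forms as cocycles} (with $\Jet^1\G$ replaced by $\widetilde{\Jet^1\G}$) match the infinitesimal equations (\ref{eq: condition 1''})--(\ref{eq: condition 3''}).

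I would match them one by one, differentiating with the explicit van Est formula $\ve(\tilde c)(\xi)=\frac{\d}{\d\eps}\big|_{0}\, g_\eps^{-1}\cdot\tilde c(g_\eps)$ and using repeatedly that a $1$-cocycle vanishes on units, so that in any product rule only the factor carrying $\tilde c$ contributes. Condition (\ref{eq: condition 2}) involves only $l$ and is literally (\ref{eq: condition 2''}), so there is nothing to do. For (\ref{eq: condition 1''}) I would take $g=1_x$, let $\sigma_g'$ be the unit jet $(\d u)_x$ and let $\sigma_g^\eps$ be a curve of jets over $1_x$ with velocity the vertical element $\d f\otimes\al\in\hom(TM,A)\subset\Jet^1A$ of the decomposition (\ref{J-decomposition}); differentiating (\ref{eq: condition 1}) then turns the left-hand side into $\eta(\d f\otimes\al)$ and, since $\sigma_g^\eps\ominus\sigma_g'$ has velocity $\d f\otimes\al$, collapses the right-hand side into $\d f\wedge l(\al)$. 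For (\ref{eq: condition 3''}) I would linearize along the flow of a holonomic jet $\jet^1\be$, so that $\Ad$ linearizes to $\ad_{\jet^1\be}=[\be,\cdot\,]$ on $A$, $\lambda$ to $[\rho(\be),\cdot\,]$ on $TM$, and the $\G$-action to $\nabla_\be$; differentiating (\ref{eq: condition 3}) produces the clean van Est term $i_{\rho(\al)}\eta(\jet^1\be)$ on the right, while the three product-rule contributions on the left reassemble, through the Lie-derivative formula and (where needed) the already matched (\ref{eq: condition 1''}) and (\ref{eq: condition 2''}), into the combination $l([\al,\be])-\Lie_\al l(\be)$.

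The converse inclusion — that an $(\eta,l)$ satisfying the infinitesimal equations comes from a $(\tilde c,l)$ satisfying the finite ones — is where $s$-connectedness of $\widetilde{\Jet^1\G}$ is essential: given $\eta$, I set $\tilde c:=\ve^{-1}(\eta)$ (the integration step, Lie II as in the proof of Proposition \ref{prop: Van Est}) and verify each finite condition by propagation along the $s$-fibers. Both sides of each finite identity agree at the units, and right-invariance makes their difference satisfy a linear homogeneous differential equation along each fiber whose vanishing is precisely the corresponding infinitesimal equation; connectedness then forces equality on the whole fiber. I expect the main obstacle to be the bookkeeping in the differentiation of (\ref{eq: condition 3}): keeping straight which section carries the jet, the placement of the contraction $i_{\rho(\al)}$, and the signs produced by the three linearized actions, so that the terms collapse exactly onto the asymmetric left-hand side of (\ref{eq: condition 3''}) rather than a superficially different combination.
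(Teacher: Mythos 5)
Your overall strategy coincides with the paper's: fix $l$, apply the van Est isomorphism of Proposition \ref{prop: Van Est} to the $s$-simply connected groupoid $\widetilde{\Jet^1\G}$ to obtain the bijection $\tilde c\leftrightarrow\eta$ on cocycles, observe that \eqref{eq: condition 2} and \eqref{eq: condition 2''} are literally the same condition, and then match the remaining two pairs of equations. Your derivation of \eqref{eq: condition 1''} by differentiating \eqref{eq: condition 1} along a curve of jets over a unit with velocity $\d f\otimes\al$ is exactly the paper's computation \eqref{eq: around the identity}. Where you genuinely diverge is on the equivalence of \eqref{eq: condition 3} with \eqref{eq: condition 3''}: you propose a direct differentiation along the flow of $\jet^1\be$ followed by a fiberwise propagation for the converse, whereas the paper avoids all of this with a purely cohomological remark --- $l$ is a $0$-cochain with $\ve(l)=l$, the anchor induces a chain map $\rho^{\ast}$ compatible with $\ve$, condition \eqref{eq: condition 3} is exactly $\d(l)=\rho^{\ast}(\tilde c)$ and \eqref{eq: condition 3''} is exactly $\d(l)=\rho^{\ast}(\eta)$, so both implications follow at once from $\ve(\d(l)-\rho^{\ast}(\tilde c))=\d(l)-\rho^{\ast}(\eta)$ together with the injectivity of $\ve$ on cocycles. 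Your route should also work, but it costs you the sign-and-bookkeeping battle you anticipate and forces a separate argument for the converse; the paper's formulation yields both directions for free.

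The one place where your argument is genuinely underspecified is the converse for \eqref{eq: condition 1}. That identity compares two arbitrary lifts $\xi_g,\xi'_g$ over the same $g$, so it is not a statement you can propagate from the unit along a single path by a homogeneous differential equation: you must first join $\xi_g$ to $\xi'_g$ by a path lying entirely in the fiber of $\widetilde{\Jet^1\G}\to\G$ over $g$, and only then does differentiating along that path (reduced to a computation at the unit via Proposition \ref{prop: appendix}) show that the defect of \eqref{eq: condition 1} is locally constant, hence zero. The connectedness of those fibers is not a consequence of the $s$-connectedness of $\widetilde{\Jet^1\G}$, as your write-up suggests; it is Lemma \ref{lemma: K}, and it uses the \emph{simple} connectedness of the $s$-fibers of $\G$, via the homotopy exact sequence of the fibration of the $s$-fiber of $\widetilde{\Jet^1\G}$ over the $s$-fiber of $\G$. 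This is precisely where the hypothesis of the proposition enters, so it should be made explicit rather than folded into a generic propagation argument.
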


For \eqref{eq: condition 1''} we are using the inclusion $i$ from the exact sequence
\[ 0\to \textrm{Hom}(TM, A)\stackrel{i}{\to} \Jet^1A \stackrel{\pr}{\to} A\to 0\]

\begin{proof}
We use the isomorphism 
\[\ve: Z^1(\widetilde{\Jet^1\G},\hom(\wedge^kTM, E)) \To Z^1(\Jet^1A, \hom(\wedge^kTM, E))\]
induced by the van Est map (Proposition \ref{prop: Van Est}); of course, $\eta= \ve(\tilde{c})$. 
Fix $(\tilde{c},l)$ and $x\in M$. We prove that \eqref{eq: condition 1} and \eqref{eq: condition 3} for $(\tilde{c},l)$ are equivalent to \eqref{eq: condition 1''} and \eqref{eq: condition 3''} for $(\ve(\tilde{c}),l)$.

We start with the equivalence of \eqref{eq: condition 3} with \eqref{eq: condition 3''}. We interpret $l$ as 
\[ l\in \Gamma(A^{\ast}\otimes\wedge^{k-1}T^{\ast}M\otimes E)= C^0(\widetilde{\Jet^1\G}, A^{\ast}\otimes\wedge^{k-1}T^{\ast}M\otimes E),\]
a zero-cochain on $\widetilde{\Jet^1\G}$. Of course, $\ve(l)= l$, with $l$ interpreted as a $0$-cochain on $\Jet^1A$. 
On the other hand, the anchor $\rho$ induces a morphism of representations
\[\rho^{\ast}: \wedge^kT^{\ast}M\otimes E \to A^{\ast}\otimes\wedge^{k-1}T^{\ast}M\otimes E\] 
hence also a map of complexes
\[\rho^{\ast}: C(\widetilde{\Jet^1\G}, \wedge^kT^{\ast}M\otimes E) \to C(\Jet^1A, A^{\ast}\otimes\wedge^{k-1}T^{\ast}M\otimes E)\]
and similarly on the algebroid cohomology, compatible with $\ve$. In particular,
\[ \ve(d(l)- \rho^{\ast}(\tilde{c}))= d(\ve(l))- \rho^{\ast}(\ve(\tilde{c}))= d(l)- \rho^{\ast}(\eta).\]
Finally, note that \eqref{eq: condition 3} is just the explicit form of the equation $d(l)= \rho^{\ast}(\tilde{c})$, while \eqref{eq: condition 3''} is just $d(l)= \rho^{\ast}(\eta)$; 
hence one just uses the injectivity of $\ve$.

We are left with proving the equivalence of \eqref{eq: condition 1} with \eqref{eq: condition 1''}. We fix $x\in M$ and we show that \eqref{eq: condition 1''} is satisfied at $x$ if and only
if \eqref{eq: condition 1} is satisfied for all $g$ that start at $x$. In the sequence
\[ \widetilde{\Jet^1(\G)}\stackrel{p}{\To} \Jet^1(\G) \stackrel{\pr}{\To} \G,\]
we consider the $s$-fibers of the three groupoids above $x$, denoted
\[ \tilde{P}\To P\To B .\]
Both $P$ and $\tilde{P}$ becomes principal bundles over $B$, with structure groups 
\[ K= \pr^{-1}(1_x),\ \hat{K}= (\pr\circ p)^{-1}(1_x),\]
respectively (the action is the one induced by the groupoid multiplication). Note also that
the map $p: \tilde{P}\to P$ has as image the connected component $P^0$ of $P$ containing the unit at $x$
and $p: \tilde{P}\to P^0$ is a covering projection. 
Since $B$ is assumed to be simply connected, the following is immediate. 

\begin{lemma}\label{lemma: K}
$\hat{K}$ is connected. 
\end{lemma}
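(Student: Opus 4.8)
The plan is to read the connectedness of $\hat{K}$ directly off the long exact homotopy sequence of the principal bundle $\tilde{P}\To B$, which is exactly what the phrase ``the following is immediate'' is pointing at.

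First I would isolate the two topological inputs. Since $\G$ is $s$-simply connected, its $s$-fiber $B$ over $x$ is simply connected; in particular $\pi_1(B)= 0$. On the other side, $\tilde{P}$ is the $s$-fiber over $x$ of $\widetilde{\Jet^1\G}$, which by its very construction is $s$-simply connected; hence $\tilde{P}$ is connected, i.e. $\pi_0(\tilde{P})= \ast$. (In fact $\tilde{P}$ is simply connected, but only its connectedness is needed below.)

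Next, as recalled just before the statement, $\tilde{P}\To B$ is a principal $\hat{K}$-bundle, hence a fibration whose fiber over the base point $1_x\in B$ is precisely $\hat{K}= (\pr\circ p)^{-1}(1_x)$. I may therefore invoke the homotopy long exact sequence of this fibration; the relevant segment, read as a sequence of pointed sets, is
\[ \pi_1(B) \To \pi_0(\hat{K}) \To \pi_0(\tilde{P}). \]
Because $\pi_0(\tilde{P})= \ast$, the right-hand map collapses all of $\pi_0(\hat{K})$ to the base point, so by exactness at $\pi_0(\hat{K})$ the connecting map $\pi_1(B)\To \pi_0(\hat{K})$ is surjective. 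Since $\pi_1(B)= 0$, this forces $\pi_0(\hat{K})= \ast$, which is exactly the assertion that $\hat{K}$ is connected.

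There is no serious obstacle here; the only points deserving a word of justification are that $\tilde{P}\To B$ is a genuine fibration — guaranteed by the principal bundle structure recalled above, so that the homotopy exact sequence is indeed available — and that $\tilde{P}$ is connected, which is precisely the $s$-connectedness of $\widetilde{\Jet^1\G}$.
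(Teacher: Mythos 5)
Your argument is correct: the paper itself declares this lemma ``immediate'' from the simple connectedness of $B$ and gives no written proof, and the homotopy exact sequence $\pi_1(B)\to\pi_0(\hat{K})\to\pi_0(\tilde{P})$ of the principal bundle $\tilde{P}\to B$, combined with $\pi_1(B)=0$ and the connectedness of $\tilde{P}$, is exactly the standard unpacking the authors intend. Nothing further is needed.
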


Assume now that $(\ve(\tilde{c}),l)$ satisfies \eqref{eq: condition 1} for all $g\in B$. Let $\eta = \ve(\tilde{c})$ and $\d f \otimes \al \in T^{\ast}M\otimes A$. Using $p : \tilde{P} \to P$ to identify a neighborhood of the identity in $\tilde{P}$, with a neighborhood of the identity in $P$, we can view, for $\eps$ small enough,
\[\gamma_x(\eps) = (\d u)_x + \eps((\d f)_x \otimes \al_x)\]
as a path in $\tilde{P}$ such that 
\[\gamma(0) = (\d u)_x, \quad \frac{\d}{\d\eps}\big{|}_{\eps = 0} \gamma_x = (\d f)_x \otimes \al_x.\] 

Since for each $\eps$, $\gamma_x(\eps)$ acts trivially on $E$, it follows that 
\begin{equation}\label{eq: around the identity}
\eta((\d f)_x \otimes \al_x) = \frac{\d}{\d\eps}\big{|}_{\eps = 0} \tilde{c}(\gamma_x(\eps))
\end{equation}
However, since $\tilde{c}$ is a cocycle, it follows that $\tilde{c}(\d_xu) = 0$, thus \eqref{eq: condition 1} implies that
\[\tilde{c}(\gamma_x(\eps))(v_1, \ldots, v_k) = \eps((\d f)_x\wedge l(\al_x))(v_1,\ldots, v_k), \]
for all $\eps$ and all $v_i \in T_xM$. By differentiating at $\eps = 0$ one obtains \eqref{eq: condition 1''} at $x$.

We now prove the converse, namely that \eqref{eq: condition 1''} at $x$ implies \eqref{eq: condition 1} at all $g\in B$. Fix $g$ and let $\xi_g$ and $\xi_g'$ be two elements of $\widetilde{\Jet^1\G}$ which lie over $g$. Let $\gamma_1$ be any path in $\tilde{P}$ joining $(\d u)_x$ to $\xi_g$, and let $\gamma_2$ be any path in the fiber of $\tilde{P} \to B$ joining $\xi_g$ to $\xi'_g$, which exists because of Lemma \ref{lemma: K}. Furthermore, we may assume that
\[\gamma_1(\eps) = \xi_g \text{ for all } \frac{1}{2}\leq\eps\leq 1,\ \ \gamma_2(\eps) = \xi_g \text{ for all } 0\leq\eps\leq\frac{1}{2}.\]\\
Thus, we obtain two smooth paths
\[\gamma_{\xi_g}(\eps) = \left\{\begin{aligned}&\gamma_1(2\eps) &\text{ if } 0 \leq \eps \leq\frac{1}{2}\\ 
&\xi_g &\text{ if } \frac{1}{2}\leq \eps\leq 1,
\end{aligned}\right. ,\ \ \gamma_{\xi_g'}(\eps) = \left\{\begin{aligned}&\gamma_1(2\eps) &\text{ if } 0 \leq \eps \leq\frac{1}{2}\\ 
&\gamma_2(2\eps - 1) &\text{ if } \frac{1}{2}\leq \eps\leq 1
\end{aligned}\right.\] 
\vskip0.5ex
Finally, we consider the path $f: [0,1] \to E$ given by
\[\begin{aligned}
&f(\eps) = \tilde{c}(\gamma_{\xi_g}(\eps))(\lambda_{\gamma_{\xi_g}(\eps)}v_1, \ldots , \lambda_{\gamma_{\xi_g}(\eps)}v_k) - \tilde{c}(\gamma_{\xi_g'}(\eps))(\lambda_{\gamma_{\xi_g'}(\eps)}v_1, \ldots , \lambda_{\gamma_{\xi_g'}(\eps)}v_k) + \\
&-\sum_{i = 1}^k(-1)^{i+1}l((\gamma_{\xi_g} \ominus \gamma_{\xi_g'})(\eps)(v_i))(\lambda_{\gamma_{\xi_g}(\eps)}v_1,\ldots,\lambda_{\gamma_{\xi_g}(\eps)}v_{i-1}, \lambda_{\gamma_{\xi_g'}(\eps)}v_{i+1},\ldots, ) 
,\end{aligned} \]
where $v_i \in T_{s(g)}M$. We must show that $f(\eps)$ is contained in the zero section $E$ for all $\eps \in [0,1]$. It is clear that this is true for $\eps \in [0, 1/2]$. On the other hand, for $\eps \in [1/2, 1]$, the path $f(\eps)$ lies inside the fiber $E_{t(g)}$. Thus, it suffices to show that the derivative of $f$ at $\eps$ vanishes for all $\eps \in [1/2,1]$. However, by Proposition \ref{prop: appendix} this is reduced to the computation \eqref{eq: around the identity} performed at the identity $(\d u)_{s(g)}$, which vanishes by virtue of \eqref{eq: condition 1''}.
\end{proof}\\

\begin{proof}[End of proof of Theorem \ref{t1}] 
We put together Propositions \ref{prop: multiplicative forms as cocycles}, \ref{corol: passage to cover} and \ref{prop: algebroid cocycles}. Of course, we recognize the $\eta$'s from the last proposition as the bundle maps $j_{D}$ associated to Spencer operators (cf. Remark \ref{rk-convenient}); hence the relation between $\eta$ and $D$ is:
\[ \eta(\jet^1\al)=: D(\alpha) .\]
Using that 
\[\Lie_{\al}(D(\be)) = \nabla_{\jet^1\al}D(\be),\]
it is immediate that the equations on $(\eta, l)$ from the previous proposition are equivalent to the equations that ensure that $(D,l)$ is a Spencer operator.
\end{proof}

\section{Proof of Theorem \ref{t3}}
\label{sec: involutivity}



In this section we prove Theorem \ref{t3}. Let $\H \subset T\G$ be a multiplicative distribution, $(\theta, E)$ its canonically associated Pfaffian system given in 
Lemma \ref{lemma-from-H-to-theta}, and $(D,l)$ its associated Spencer operator given explicitly in Theorem \ref{t2}. Recall that 
\[ \mathfrak{g} = (\H\cap\ker \d s)|_M,\qquad E = A/\mathfrak{g}\]
and that $j_{\mathfrak{g}}$ denotes the symbol representation
\[ j_{\mathfrak{g}}: \mathfrak{g}\To \hom(TM, E), \ \ j_{\mathfrak{g}}(\beta)(X)= D_{X}(\beta).\]

As we have already pointed out in the case of contact geometry (see Subsection \ref{Contact groupoids}, especially equation (\ref{I-H})),
the involutivity of $\H$ is controlled by the bracket modulo $\H$; using $\theta$ to identify $T\G/\H$ with $t^*E$, this is
\[ I^{\H}: \H\times \H \To t^*E, \ I^{\H}(X, Y)= \theta([X, Y]).\]
We will use the following multiplicativity property of $I^{\H}$.

\begin{lemma}\label{lemma: delta theta is multiplicative}
The map $I^{\H}: \H \wedge\H \to t^{\ast}E$ satisfies
\[ I^{\H}(\d m(\xi_1, \xi_2), \d m(\xi_1',\xi_2')) =  I^{\H}(\xi_1,\xi_1')  + \Ad^\H_g I^{\H}(\xi_2,\xi_2'),\]
where $\xi_1,\xi_1'\in\H_g$, and $\xi_2,\xi_2'\in \H_h$ are such that $\d t(\xi_2) = \d s(\xi_1)$, and similarly $\d t(\xi_2') = \d s(\xi_1')$ 
\end{lemma}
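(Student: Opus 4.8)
The plan is to exhibit $I^{\H}$ as a tensorial object and then read off its multiplicativity directly from that of $\theta=\theta_{\H}$. First I would record that $I^{\H}$ is well defined and $C^{\infty}(M)$-bilinear. For $\tilde X,\tilde Y\in\Gamma(\H)$ one has $\theta(\tilde X)=\theta(\tilde Y)=0$; so if $\tilde X$ vanishes at a point $g$, writing $\tilde X=\sum_i f_i Z_i$ locally with $Z_i\in\Gamma(\H)$ and $f_i(g)=0$, the value $[\tilde X,\tilde Y]_g=-\sum_i(\tilde Y f_i)(g)\,Z_i(g)$ lies in $\H_g=\ker\theta_g$ and is therefore killed by $\theta$. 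Hence $I^{\H}(\xi,\xi')=\theta([\tilde X,\tilde Y])$ depends only on the values $\xi=\tilde X_g$, $\xi'=\tilde Y_g$ (equivalently, $I^{\H}$ is, up to sign, the restriction of $d\theta$ to $\H\wedge\H$, which is tensorial on $\ker\theta$). This tensoriality is what lets me compute each of the three terms in the statement with whatever local extensions are most convenient.

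Next I would choose the extensions adapted to the multiplication. Extend $\xi_1,\xi_1'$ to sections $\tilde\xi_1,\tilde\xi_1'\in\Gamma(\H)$ near $g$ and $\xi_2,\xi_2'$ to $\tilde\xi_2,\tilde\xi_2'\in\Gamma(\H)$ near $h$, arranged so that $\tilde\xi_1$ is $\d s$-projectable and $\tilde\xi_2$ is $\d t$-projectable to one and the same vector field on $M$ (and likewise for the primed fields). Then the pairs $W=(\tilde\xi_1,\tilde\xi_2)$ and $W'=(\tilde\xi_1',\tilde\xi_2')$ are tangent to $\G_2\subset\G\times\G$ and take values in $\H\times_{TM}\H$, so that by multiplicativity of the distribution both $\d m\,W$ and $\d m\,W'$ land in $\H$. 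The key point is to also produce sections $V,V'\in\Gamma(\H)$ near $gh$ that are $m$-related to $W,W'$, i.e. $V\circ m=\d m\circ W$. Granting this, relatedness of brackets of $m$-related vector fields gives $\d m\,[W,W']=[V,V']\circ m$, while the bracket of the product fields is computed componentwise on $\G\times\G$; evaluating at $(g,h)$ then yields
\[ [V,V']_{gh}=\d m\big([\tilde\xi_1,\tilde\xi_1']_g,\ [\tilde\xi_2,\tilde\xi_2']_h\big),\]
together with $V(gh)=\d m(\xi_1,\xi_2)$ and $V'(gh)=\d m(\xi_1',\xi_2')$.

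Finally I would apply $\theta_{gh}$ to this identity and invoke the pointwise multiplicativity of $\theta$ from Lemma \ref{lemma-from-H-to-theta}. Since the pair $([\tilde\xi_1,\tilde\xi_1']_g,[\tilde\xi_2,\tilde\xi_2']_h)$ is composable, multiplicativity splits the result as
\[ \theta_{gh}\big(\d m([\tilde\xi_1,\tilde\xi_1']_g,[\tilde\xi_2,\tilde\xi_2']_h)\big)=\theta_g([\tilde\xi_1,\tilde\xi_1']_g)+\Ad^{\H}_g\,\theta_h([\tilde\xi_2,\tilde\xi_2']_h),\]
and by definition and tensoriality the three terms are exactly $I^{\H}(\d m(\xi_1,\xi_2),\d m(\xi_1',\xi_2'))$, $I^{\H}(\xi_1,\xi_1')$ and $\Ad^{\H}_g I^{\H}(\xi_2,\xi_2')$, which is the claim. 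The one genuinely delicate step is the construction of the $m$-related extensions $V,V'$, equivalently the descent of the pointwise product $\d m(\tilde\xi_1,\tilde\xi_2)$ to a vector field on $\G$; this is where one must use that $\H$ is a Lie subgroupoid of $T\G\rightrightarrows TM$ (and not merely an $s$-transversal sub-distribution), so that $\d m$ is an honest groupoid multiplication on $\H$ and $V$ may be built by right translation inside $\H$. I expect this to be the main obstacle; everything else is either tensorial bookkeeping or the pointwise multiplicativity equation already established for $\theta$.
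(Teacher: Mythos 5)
Your overall skeleton --- tensoriality of the bracket-mod-$\H$ pairing plus the pointwise multiplicativity of $\theta$ --- is the right one and is also what the paper uses. But the step you yourself flag as ``the main obstacle'' is a genuine gap, and as stated your proof does not close it. You need $\theta_{gh}\bigl(\d m([W,W']_{(g,h)})\bigr)$, with $W=(\tilde\xi_1,\tilde\xi_2)$ of product form, to equal $I^{\H}(\d m(\xi_1,\xi_2),\d m(\xi_1',\xi_2'))=\theta_{gh}([V,V']_{gh})$ for sections $V,V'$ of $\H$ through the product vectors. Your route to this is to demand a single extension that is simultaneously of product form, componentwise projectable to $M$, \emph{and} $m$-projectable. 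That is a genuinely overdetermined requirement: a product pair $(\tilde\xi_1,\tilde\xi_2)$ descends along $m$ only if $\d m(\tilde\xi_1(a),\tilde\xi_2(b))$ depends on $ab$ alone, which fails for generic choices, and ``build $V$ by right translation inside $\H$'' is not a construction --- right translation by what bisection of $\H\tto TM$, and why would it reproduce the prescribed values $\xi_1,\xi_2$ at $g,h$? Nothing you have written rules out that $\d m([W,W']_{(g,h)})$ and $[V,V']_{gh}$ differ by a vector outside $\H_{gh}$.

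The fix --- and this is exactly how the paper argues --- is to never ask for one extension with all three properties at once. For a regular form $u$ with kernel $K_u$, the pairing $I_u(X,Y)=u([\tilde X,\tilde Y])$ is tensorial on \emph{all} of $K_u$, so it may be computed with any extensions inside $K_u$, and different terms may use different extensions. Apply this to $u=m^{\ast}\theta$ on $\G_2$, whose kernel is $(\d m)^{-1}(\H)$ and which contains your product pairs $(\xi_1,\xi_2)$ because $\H$ is multiplicative. Computing $I_{m^{\ast}\theta}$ with your product-form extensions gives $\theta_{gh}(\d m[W,W'])$, which by multiplicativity of $\theta$ is the right-hand side of the lemma; computing the \emph{same} tensorial quantity with $m$-projectable extensions of $(\xi_1,\xi_2)$ and $(\xi_1',\xi_2')$ inside $(\d m)^{-1}(\H)$ --- which always exist since $m$ is a submersion, with no product-form constraint --- gives $I_{f^{\ast}u}=f^{\ast}(I_u)$, i.e.\ exactly $I^{\H}(\d m(\xi_1,\xi_2),\d m(\xi_1',\xi_2'))$. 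Equality of the two computations is then immediate from tensoriality on $(\d m)^{-1}(\H)$. (The paper packages this as $I_{m^{\ast}\theta}=m^{\ast}I_{\theta}$, $I_{\pr_i^{\ast}\theta}=\pr_i^{\ast}I_{\theta}$ and additivity of $I$ on the common kernel, applied to $m^{\ast}\theta=\pr_1^{\ast}\theta+g\cdot\pr_2^{\ast}\theta$.) With this observation inserted in place of the missing descent construction, the rest of your argument goes through.
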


\begin{proof} In general, for a regular form $u\in \Omega^1(P, F)$, denote by $I_{u}$ the resulting bilinear form
$I_u$ on $K_u:= \textrm{Ker}(u)$. If $f: Q\to P$ is a submersion, it is easy to see (using projectable vector fields) that 
$I_{f^{\ast}u}= f^{\ast}(I_u)$, i.e. 
\[ I_{f^*u}(X, Y)= I_u (\d f(X), \d f(Y))\ \ \ \ (X, Y\in K_{f^*u}= (\d f)^{-1}(K_u)).\]
In particular, $I_{m^{\ast}\theta}= m^{\ast}I_{\theta}$, $I_{\pr_{1}^{\ast}\theta}= \pr_{1}^{\ast}I_{\theta}$. A variation of this argument also gives
$I_{g^{-1}\pr_{2}^{\ast}\theta}= g^{-1}\pr_{2}^{\ast}I_{\theta}$ where $g^{-1}$ refers to the multiplication by the inverse of the first component on $E$.
Another general remark is that, for $u, v\in \Omega^1(P, E)$, $I_{u+v}= I_u+ I_v$ on $K_u\cap K_v$. Putting everything together we find that 
\[ m^{\ast}(I_{\theta}) = \pr_{1}^{\ast}(I_{\theta})+ g^{-1} \pr_{2}^{\ast}(I_{\theta})\]
on all pairs $(U, V)$ of vectors tangent to $\G_{2}$ with 
\[ U, V\in (\d\pr_1)^{-1}(\H)\cap (\d\pr_2)^{-1}(\H)\]
from which the lemma follows. 
\end{proof} \\

From the multiplicativity of $\H$, one obtains a subgroupoid of $\Jet^1\G$:
\[\Jet^1_\H\G=\{\sigma_g\in \Jet^1\G\mid \sigma_g:T_{s(g)}M\to \H_g\subset T_g\G\}.\]
Each $\sigma_g\in \Jet^{1}_{\H}\G$ induces a splitting
\[ T_{s(x)}M\oplus \B_{t(g)}\cong \H_g,\ \ (X, \alpha)\mapsto \sigma_g(X)+ R_g(\alpha).\]
and the idea is to use this splitting to analyze the vanishing of $I^{\H}$. First of all:

\begin{lemma} $I^{\H}(\H, \H^s)= 0$ if and only if $j_{\mathfrak{g}}= 0$.
\end{lemma}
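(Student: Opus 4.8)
The plan is to prove the single pointwise identity
\[ I^{\H}(X_g,Y_g)= j_{\mathfrak{g}}\big(R_{g^{-1}}(Y_g)\big)\big((\d t)_g(X_g)\big) \]
for every $g\in\G$ and every $X_g\in\H_g$, $Y_g\in\H^s_g$ (note $R_{g^{-1}}(Y_g)\in\mathfrak{g}_{t(g)}$, so the right-hand side lies in $E_{t(g)}$). Both implications then follow at once: if $I^{\H}(\H,\H^s)=0$ then, specializing to a unit $g=1_y$ where $R_{g^{-1}}=\mathrm{id}$ and taking $X_g=(\d u)_y(W)$ so that $(\d t)_{1_y}(X_g)=W$, we read off $j_{\mathfrak{g}}(V)(W)=0$ for all $W\in T_yM$, $V\in\mathfrak{g}_y$; conversely, $j_{\mathfrak{g}}=0$ makes the right-hand side vanish identically.

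First I would record two preliminaries. Since $\H=\Ker(\theta)$, the expression $I^{\H}(X,Y)=\theta([X,Y])$ is $C^{\infty}(\G)$-linear on $\Gamma(\H)\times\Gamma(\H)$ (for $f\in C^{\infty}(\G)$ one has $\theta([fX,Y])=f\theta([X,Y])-(Yf)\theta(X)=f\theta([X,Y])$), hence tensorial; and for $\beta\in\Gamma(\mathfrak{g})$ the right-invariant field $\beta^r$ is a section of $\H^s$, because $R_{g^{-1}}(\beta^r_g)=\beta_{t(g)}\in\mathfrak{g}_{t(g)}$. Next I would prove the identity at a unit $g=1_y$. Given $W\in T_yM$ and $V\in\mathfrak{g}_y$, extend $V$ to a section $\beta\in\Gamma(\mathfrak{g})$ and $W$ to a field $\tilde X\in\Gamma(\H)$ that is $s$-projectable to some $X\in\X(M)$ with $X_y=W$ and $\tilde X|_M=u_*(X)$. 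By tensoriality, $I^{\H}((\d u)_y(W),V)=\theta([\tilde X,\beta^r]_{1_y})$; the bracket $[\tilde X,\beta^r]$ is $s$-vertical (its $s$-projection is $[X,0]=0$), so at $1_y$ it lies in $A_y$, where $\theta$ is simply the quotient map $A_y\to E_y=A_y/\mathfrak{g}_y$. Comparing with the explicit formula (\ref{D}), namely $D_X(\beta)(y)=[\tilde X,\beta^r]_y \mod \H^s_{1_y}$, gives exactly $I^{\H}((\d u)_y(W),V)=j_{\mathfrak{g}}(V)(W)$.

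The remaining step, which I expect to be the crux, is to reduce the general case to this unit computation using the multiplicativity of $I^{\H}$ (Lemma \ref{lemma: delta theta is multiplicative}). The right choice is the left-unit decomposition $g=1_{t(g)}\cdot g$ (the decomposition $g=g\cdot 1_{s(g)}$ only yields a tautology). Writing $y=t(g)$ and $\beta_y:=R_{g^{-1}}(Y_g)\in\mathfrak{g}_y=\H^s_{1_y}$, I would lift $Y_g=R_g(\beta_y)=(\d m)_{(1_y,g)}(\beta_y,0_g)$ and $X_g=(\d m)_{(1_y,g)}\big((\d u)_y((\d t)_g X_g),X_g\big)$, the latter via the left-unit law $(\d m)_{(1_y,g)}((\d u)_y((\d t)_g\xi),\xi)=\xi$. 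Applying the lemma with first factor $1_y$ and second factor $g$, the correction term $\Ad^\H_{1_y}I^{\H}(X_g,0_g)$ vanishes (one argument is the zero vector, and $\Ad^\H_{1_y}=\mathrm{id}$), leaving $I^{\H}(X_g,Y_g)=I^{\H}\big((\d u)_y((\d t)_gX_g),\beta_y\big)$, which is the unit value already computed. The main obstacle is exactly this bookkeeping: selecting the lifts $\xi_1,\xi_2,\xi_1',\xi_2'$ so that precisely one summand of the multiplicativity formula survives and lands at the unit $1_{t(g)}$, and verifying the source–target compatibilities $(\d t)(\xi_2)=(\d s)(\xi_1)$ and $(\d t)(\xi_2')=(\d s)(\xi_1')$ for these lifts, which is routine but essential.
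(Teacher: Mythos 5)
Your proof is correct, and it rests on the same two pillars as the paper's: the computation of $I^{\H}$ at units, where formula (\ref{D}) identifies $I^{\H}_{1_y}((\d u)_y(W),V)$ with $j_{\mathfrak{g}}(V)(W)$, and the multiplicativity of $I^{\H}$ from Lemma \ref{lemma: delta theta is multiplicative}. Where you genuinely diverge is in the factorization fed into that lemma. The paper decomposes the \emph{unit} as $1_{t(g)}=g\cdot g^{-1}$, writing $\lambda_g(X_x)=(\d m)_{(g,g^{-1})}(\sigma_g(X_x),(\d i)_g\sigma_g(X_x))$ and $\beta_y=(\d m)_{(g,g^{-1})}(R_g(\beta_y),0_{g^{-1}})$; this transports the unit value down to $g$, but only for first arguments of the special form $\sigma_g(X_x)$ with $\sigma_g\in\Jet^1_{\H}\G$, so an extra perturbation argument ($\sigma_g^{\eps}(X_x)=\sigma_g(X_x)+\eps R_g(\xi(X_x))$) is needed to reach the pairs in $\H^s_g\times\H^s_g$ and conclude $I^{\H}(\H_g,\H^s_g)=0$. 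Your factorization $g=1_{t(g)}\cdot g$, via the left-unit law $(\d m)_{(1_y,g)}((\d u)_y((\d t)_g\xi),\xi)=\xi$, transports an \emph{arbitrary} $X_g\in\H_g$ up to the unit in one step and yields the clean pointwise identity $I^{\H}(X_g,Y_g)=j_{\mathfrak{g}}(R_{g^{-1}}Y_g)((\d t)_gX_g)$ on all of $\H_g\times\H^s_g$, from which both implications drop out at once; the paper's perturbation step becomes unnecessary. The source--target compatibilities and the survival of exactly one summand, which you flag as the crux, do check out ($(\d s)_{1_y}\circ(\d u)_y=\mathrm{id}$ handles the first pair, $\beta_y\in\Ker(\d s)$ and $0_g$ the second, and $\Ad^{\H}_{1_y}I^{\H}(X_g,0_g)=0$ kills the correction term); as a reassuring consistency check, the antisymmetry of your identity restricted to $\H^s\times\H^s$ is precisely condition (\ref{vertical}).
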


\begin{proof} 
For any $y\in M$, $Y_y\in T_yM$, $\beta_y\in \mathfrak{g}_y$, extending them to sections $Y \in \Gamma(\H)$ and $\beta \in \Gamma(\H^s)$
and using them in the formula for $D$ in Theorem \ref{t2}, we have:
\[ j_{\mathfrak{g}}(\beta_y)(X_y)= D_{Y}(\beta)(y)= \theta_{1_y}([\beta, Y])= I^{H}_{y}(\beta_y, Y_y)\]
where, as before, we identify $y$ with $1_y$. 

For arbitrary $\sigma_g\in \Jet^{1}_{\H}\G$ with $s(g)= x$, $t(g)=y$ and $X_x\in T_xM$, $\beta_y\in \mathfrak{g}_y$ we write
\[ \lambda_g(X_x)= (\d t)_g(\sigma_g(X_x))= (\d m)_{(g, g^{-1})}(\sigma_g(X_x), (\d i)_g\sigma_g(X_x)),\]
\[ \beta_y= (\d m)_{(g, g^{-1})}(R_g(\beta_y)), 0_{g^{-1}})\]
and we plug them in the previous lemma:
\[ I^{\H}_{g}(\sigma_g(X_x), R_{g}(\beta_y))= I^{\H}_{y}(\lambda_g(X_x), \beta_y)= j_{\mathfrak{g}}(\beta_y)(\lambda_g(X_x)).\]
Hence $j_{\mathfrak{g}}= 0$ if and only if $I^{\H}_{g}(\sigma_g(T_xM), \H^{s}_{g})= 0$ for all $\sigma_g\in \Jet^{1}_{\H}\G$.
Note that for any $\sigma_g$ and any $\xi: T_xM\to \mathfrak{g}_y$ linear,
\[ \sigma_{g}^{\eps}(X_x)= \sigma_{g}(X_x)+ \eps R_{g}(\xi(X_x))\]
belongs to $\Jet^{1}_{\H}\G$ for $\eps$ small  enough, so the last equation also implies that $I^{\H}_{g}(\H_g, \H^{s}_{g})= 0$,
and then the equivalence with $j_{\mathfrak{g}}= 0$ is clear. 
\end{proof}\\

Next we introduce a $1$-cocycle which takes care of the value of $I^{\H}$ on expressions of type $(\sigma_g(X), \sigma_g(Y))$ and which, together with $j_{\mathfrak{g}}$, take care of the involutivity of $\H$. More precisely, define
\[ c:  \Jet^{1}_{\H}\G \To s^{\ast} \hom(\Lambda^2TM, E)\]
where, for $\sigma_g\in \Jet^{1}_{\H}\G$ with $s(g)= x$, $X_x, Y_x\in T_xM$,
\[ c(\sigma_g)
(X_x, Y_x)= \Ad^{\H}_{g^{-1}} I^{\H}_{g} (\sigma_g(X_x), \sigma_g(Y_{x})).\]

The following is now clear: 

\begin{lemma} 
$\H$ is involutive if and only if $j_{\mathfrak{g}}= 0$ and $c= 0$.
\end{lemma}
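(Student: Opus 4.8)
The plan is to use the fact that, since $\theta$ is a defining form for $\H$ and $I^{\H}(X,Y)=\theta([X,Y])$ is $C^{\infty}(\G)$-bilinear on sections of $\H$, the distribution $\H$ is involutive precisely when the tensor $I^{\H}$ vanishes identically. Everything thus reduces to analysing $I^{\H}_g\colon\Lambda^2\H_g\to E_{t(g)}$ arrow by arrow, and the two preceding lemmas are set up exactly so as to split this analysis into the $j_{\mathfrak{g}}$-part and the $c$-part.

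First I would fix an arrow $g\colon x\to y$ together with a splitting $\sigma_g\in\Jet^{1}_{\H}\G$ (such splittings exist by the $s$-transversality of $\H$), and use the induced direct sum decomposition $\H_g=\sigma_g(T_xM)\oplus\H^{s}_g$, where $\H^{s}_g=R_g(\mathfrak{g}_y)$. A skew form on such a direct sum vanishes if and only if it vanishes on $\sigma_g(T_xM)\times\sigma_g(T_xM)$ and on $\H_g\times\H^{s}_g$; the second condition, by bilinearity and skew-symmetry, already accounts for both the mixed and the purely vertical pairings. Hence $I^{\H}\equiv 0$ is equivalent to the simultaneous vanishing, for every $g$, of these two blocks.

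The second block, taken over all $g$, is exactly the requirement $I^{\H}(\H,\H^{s})=0$, which by the previous lemma is equivalent to $j_{\mathfrak{g}}=0$; note that this already subsumes the vertical-vertical pairings $I^{\H}(\H^{s},\H^{s})$ since $\H^{s}\subset\H$. The first block is governed by $c$: because $\Ad^{\H}_{g^{-1}}$ is an isomorphism, the expression $c(\sigma_g)=\Ad^{\H}_{g^{-1}}\,I^{\H}_g(\sigma_g(\cdot),\sigma_g(\cdot))$ vanishes if and only if $I^{\H}_g$ vanishes on $\sigma_g(T_xM)\times\sigma_g(T_xM)$. Assembling these two identifications yields both implications at once: if $\H$ is involutive then $I^{\H}=0$ forces both blocks to vanish, giving $j_{\mathfrak{g}}=0$ and $c=0$; conversely, $j_{\mathfrak{g}}=0$ and $c=0$ make every block of each $I^{\H}_g$ vanish, so $I^{\H}\equiv 0$ and $\H$ is involutive.

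The one point requiring care — and the only place I expect any friction — is the bookkeeping that these two blocks genuinely exhaust $I^{\H}_g$ and that the horizontal-horizontal block is captured by $c$ uniformly in $g$. Here the key observation is that, once the $\H^{s}$-block vanishes (i.e. once $j_{\mathfrak{g}}=0$), the value $I^{\H}_g(\sigma_g(X),\sigma_g(Y))$ is independent of the chosen splitting: two splittings into $\H_g$ differ by a map with image in $\H^{s}_g$, and the resulting correction terms land in $I^{\H}(\H,\H^{s})=0$. This independence is what lets one pass freely between an arbitrary linear splitting of $(\d s)_g|_{\H_g}$ and a genuine element of $\Jet^{1}_{\H}\G$, and thereby conclude block vanishing at every $g$. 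Modulo this remark, the statement is immediate from the two preceding lemmas.
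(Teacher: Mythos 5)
Your proof is correct and is essentially the argument the paper has in mind: the paper states this lemma with no proof at all (``The following is now clear''), precisely because the two preceding lemmas reduce involutivity, i.e.\ the vanishing of the tensor $I^{\H}$, to the vanishing of its $\H\times\H^{s}$ block (equivalent to $j_{\mathfrak{g}}=0$) and its horizontal block $\sigma_g(T_xM)\times\sigma_g(T_xM)$ (equivalent to $c=0$, since $\Ad^{\H}_{g^{-1}}$ is an isomorphism). Your elaboration, including the remark on splitting-independence, fills in exactly the bookkeeping the authors left implicit.
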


Note that, under the assumption $j_{\mathfrak{g}} \equiv 0$, $c(\sigma_g)$ only depends on $g$ and not on the entire splitting $\sigma_g$ at $g$, i.e.
$c= \pr^{*}c_0$, the pull-back along the projection $\pr: \Jet^{1}_{\H}\G\to \G$ of some
\[ c_0: \G \To s^{\ast} \hom(\Lambda^2TM, E).\]
However, even in this case, we will continue to work with $c$ because $\G$ does not act canonically on $\hom(\Lambda^2TM, E)$, and solving this problem for $c_0$ requires some work. The aim is to show now that $c$ is indeed a $1$-cocycle on $\Jet^{1}_{\H}\G$. Of course, we talk here about a $1$-cocycle along $s$ (see Remark \ref{along-s}). 
Also, the action of $\Jet^1_\H\G$ on  $\hom(\wedge^2TM, E)$ is the one induced by the representations $\lambda$, and $\Ad^{\H} \circ \pr$ on $TM$ and on $E$ respectively:
for $\sigma_{g}\in \Jet^1_\H\G$, with $s(g)= x$, $t(g)= y$ and for $T_x\in \hom(\Lambda^{2}T_{x}M, E_x)$, 
\[ g(T_x)\in \hom(\Lambda^{2}T_{y}M, E_y),\ \ g(T_x)(X_y, Y_y)= \Ad^{\H}_g T_x(\lambda_{g}^{-1}X_y, \lambda_{g}^{-1}Y_y).\]

\begin{lemma}\label{cocycle}
The map $c: \Jet^1_\H\G \to t^{\ast}\hom(\wedge^2TM, E)$ is a cocycle.
\end{lemma}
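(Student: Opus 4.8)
The plan is to verify directly the cocycle identity \emph{along} $s$ (in the sense of Remark \ref{along-s}), i.e.\ that
\[ c(\sigma_g\cdot\sigma_h) = \sigma_h^{-1}\cdot c(\sigma_g) + c(\sigma_h) \]
for every composable pair $\sigma_g,\sigma_h\in\Jet^1_\H\G$, where the action of $\sigma_h$ on $\hom(\wedge^2TM,E)$ is the one spelled out just before the statement (through $\lambda$ on $TM$ and $\Ad^\H\circ\pr$ on $E$). The only nontrivial input is the multiplicativity of $I^\H$ established in Lemma \ref{lemma: delta theta is multiplicative}; everything else is bookkeeping of the $\Ad^\H$-factors. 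Note first that $\Jet^1_\H\G$ is indeed closed under the product, so $\sigma_g\cdot\sigma_h$ makes sense as an element of $\Jet^1_\H\G$: this is exactly the closure property of $\H$ used in the proof of Lemma \ref{lemma-from-H-to-theta}.

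I would fix $\sigma_g$ over $g\colon y\to z$ and $\sigma_h$ over $h\colon x\to y$, and evaluate $c(\sigma_g\cdot\sigma_h)$ on $X_x,Y_x\in T_xM$. Using the explicit multiplication (\ref{mult-i-J1}), $(\sigma_g\cdot\sigma_h)(X_x)=\d m(\sigma_g(\lambda_{\sigma_h}X_x),\sigma_h(X_x))$ and likewise for $Y_x$; since $\sigma_g,\sigma_h$ take values in $\H$, these lie in $\H_{gh}$, and the pairs $\big(\sigma_g(\lambda_{\sigma_h}X_x),\sigma_h(X_x)\big)$, $\big(\sigma_g(\lambda_{\sigma_h}Y_x),\sigma_h(Y_x)\big)$ satisfy the composability hypothesis of Lemma \ref{lemma: delta theta is multiplicative} (both $\d s$ of the first leg and $\d t$ of the second leg equal $\lambda_{\sigma_h}X_x$, resp.\ $\lambda_{\sigma_h}Y_x$). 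Applying that lemma splits $I^\H_{gh}$ on these arguments into a term over $g$, namely $I^\H_g(\sigma_g(\lambda_{\sigma_h}X_x),\sigma_g(\lambda_{\sigma_h}Y_x))$, plus an $\Ad^\H_g$-twisted term over $h$, namely $\Ad^\H_g\,I^\H_h(\sigma_h(X_x),\sigma_h(Y_x))$.

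Then I would apply $\Ad^\H_{(gh)^{-1}}=\Ad^\H_{h^{-1}}\Ad^\H_{g^{-1}}$, which is legitimate because $E$ is a representation of $\G$ (Lemma \ref{lemma-from-H-to-theta}), to the two summands. The first becomes $\Ad^\H_{h^{-1}}\big(\Ad^\H_{g^{-1}}I^\H_g(\sigma_g(\lambda_{\sigma_h}X_x),\sigma_g(\lambda_{\sigma_h}Y_x))\big)=\Ad^\H_{h^{-1}}\,c(\sigma_g)(\lambda_{\sigma_h}X_x,\lambda_{\sigma_h}Y_x)$, which, unwinding the action formula and noting $\lambda_{\sigma_h^{-1}}^{-1}=\lambda_{\sigma_h}$, is exactly $(\sigma_h^{-1}\cdot c(\sigma_g))(X_x,Y_x)$. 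In the second summand the composite $\Ad^\H_{g^{-1}}\Ad^\H_g$ cancels, leaving $\Ad^\H_{h^{-1}}I^\H_h(\sigma_h(X_x),\sigma_h(Y_x))=c(\sigma_h)(X_x,Y_x)$. Summing the two recovers the cocycle identity.

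There is no serious obstacle here: the substantive content is entirely carried by Lemma \ref{lemma: delta theta is multiplicative}, and what remains is to track the three $\Ad^\H$-twists and match them against the definition of the $\Jet^1_\H\G$-action. The one place to be careful is the direction conventions — verifying that the inverse element $\sigma_h^{-1}$ acts through $\lambda_{\sigma_h^{-1}}^{-1}=\lambda_{\sigma_h}$ on $TM$ and through $\Ad^\H_{h^{-1}}$ on $E$, so that the first summand lands precisely on $\sigma_h^{-1}\cdot c(\sigma_g)$ and not on some other twist.
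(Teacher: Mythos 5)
Your proof is correct and follows essentially the same route as the paper's: both reduce the cocycle identity to the multiplicativity of $I^{\H}$ from Lemma \ref{lemma: delta theta is multiplicative} applied to the legs of $\sigma_g\cdot\sigma_h$ computed via (\ref{mult-i-J1}), and then apply $\Ad^{\H}_{h^{-1}g^{-1}}$ to match the two summands against $\Ad^{\H}_{h^{-1}}(c(\sigma_g))+c(\sigma_h)$. Your extra care with the ``along $s$'' convention and the action of $\sigma_h^{-1}$ is welcome, since the paper states the target as $t^{\ast}\hom(\wedge^2TM,E)$ while actually defining $c$ along $s$.
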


\begin{proof}
Let $\sigma_g, \sigma_h\in \Jet^{1}_{\H}\G$ composable, $X, Y\in T_{s(h)}M$. Using the formula describing the composition $\sigma_g\cdot\sigma_h$, i.e. applying (\ref{mult-i-J1}) for $u= X$ and $u= Y$ and then applying the multiplicativity of $I^{\H}$ from Lemma \ref{lemma: delta theta is multiplicative}, we find
\[\begin{aligned}
I_{gh}(\sigma_g\cdot\sigma_h(X), \sigma_g\cdot\sigma_h(Y)) & = I_g(\sigma_g(\lambda_h(X)), \sigma_g(\lambda_h(Y))) + \\ 
                                                                                                  & + \Ad^{\H}_g I_h(\sigma_h(X), \sigma_h(Y)).
\end{aligned}\]
Rewriting this in terms of $c$, we obtain, after also applying $\Ad^{\H}_{h^{-1}g^{-1}}$,
\[ c(\sigma_g\cdot\sigma_h)(X, Y)= \Ad^{\H}_{h^{-1}} c(\sigma_g)(\lambda_h(X), \lambda_h(Y))+ c(\sigma_h)(X, Y),\]
i.e. the cocycle condition $c(\sigma_g\cdot\sigma_h)= \Ad^{\H}_{h^{-1}}(c(\sigma_g))+ c(\sigma_h)$.
\end{proof}\\

Of course, the next step is to linearize $c$. Hence we pass to the Lie algebroid $\Jet^{1}_{D}A$ of $\Jet^{1}_{\H}\G$. Of course, 
the inclusion $\Jet^{1}_{\H}\G\subset \Jet^{1}\G$ induces an inclusion
\[ \Jet^{1}_{D}A\subset \Jet^{1}A .\]

\begin{lemma} Via the decomposition $\Gamma(\Jet^1(A))\cong \Gamma(A)\oplus \Omega^1(M, A)$ (cf. Example \ref{1-jets-convenient}),
$\Jet^{1}_{D}A$ consists of pairs $(\alpha, \omega)$ with the property that 
\[ D_{X}(\alpha)= -l(\omega(X)) \ \ \ \forall\ X\in \X(M).\]
Moreover, the linearization of the cocycle $c$,
\[ \kappa: \Jet^{1}_{D}A \To \hom(\Lambda^2TM, E),\]
is given on sections by
\[\kappa(\alpha,\omega)(X,Y)=-D_X(\omega (Y))+D_X(\omega (Y))+l(\omega[X,Y]).\]
\end{lemma}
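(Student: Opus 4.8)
The statement consists of two independent assertions: the identification of the subalgebroid $\Jet^1_D A \subset \Jet^1 A$, and the computation of the linearization $\kappa$ of the cocycle $c$. I would treat them separately. For the first assertion the plan is to realize $\Jet^1_\H\G$ as the kernel of a degree-one cocycle and then linearize. Specializing Proposition \ref{prop: multiplicative forms as cocycles} to $k=1$ and to the Pfaffian system $(\theta,E)=(\theta_\H, A/\mathfrak{g})$ of Lemma \ref{lemma-from-H-to-theta}, the associated cocycle $c_\theta \in Z^1(\Jet^1\G, \hom(TM,E))$ is $c_\theta(\sigma_g)(w)=\theta_g(\sigma_g(\lambda_{\sigma_g}^{-1}w))$. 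A splitting $\sigma_g$ has image in $\H_g=\ker\theta_g$ exactly when $\theta_g\circ\sigma_g=0$, so $\Jet^1_\H\G=\ker(c_\theta)$ is a closed subgroupoid, and its Lie algebroid is the kernel of the linearized cocycle $\eta=\ve(c_\theta)$ (immediate from the van Est formula $\eta(\xi)=\frac{\d}{\d\eps}\big|_{\eps=0}g_\eps^{-1}c_\theta(g_\eps)$, which vanishes along curves inside $\ker c_\theta$). By the proof of Theorem \ref{t1} (Proposition \ref{prop: algebroid cocycles} combined with the Spencer decomposition $\xi=\jet^1\al+i(\omega)$), $\eta$ reads $\eta(\xi)(X)=D_X(\al)+l(\omega(X))$ on $\xi=(\al,\omega)$, using $\eta(\jet^1\al)=D(\al)$ and equation \eqref{eq: condition 1''} in the form $\eta(i(\omega))(X)=l(\omega(X))$. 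Hence $\xi\in\Jet^1_D A$ iff $D_X(\al)=-l(\omega(X))$ for all $X$, which is the claim.

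For the second assertion I would compute $\kappa$ as a differential at the units. Since $c$ is a cocycle along $s$ (Lemma \ref{cocycle}), Remark \ref{along-s} gives $\kappa(\xi)(X,Y)=\frac{\d}{\d\eps}\big|_{\eps=0}c(\sigma^\eps)(X,Y)$ for any curve $\sigma^\eps$ in $\Jet^1_\H\G$ through the unit $1_x$ with velocity $\xi=(\al,\omega)\in(\Jet^1_D A)_x$, the foot-points being $g_\eps=\phi^\eps_\al(x)$. Writing $c(\sigma^\eps)(X,Y)=\Ad^\H_{g_\eps^{-1}}I^\H_{g_\eps}(\sigma^\eps(X),\sigma^\eps(Y))$ and using that $I^\H$ vanishes on $T_xM\times T_xM$ (because $T_xM\subset\H$ and $\theta|_{TM}=0$, equivalently $c(1_x)=0$), the Leibniz term obtained by differentiating $\Ad^\H_{g_\eps^{-1}}$ is applied to $0$ and drops out. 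Thus $\kappa(\xi)(X,Y)$ is the vertical velocity in $E_x$ of $\eps\mapsto I^\H_{g_\eps}(\sigma^\eps(X),\sigma^\eps(Y))$.

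It then remains to differentiate the bracket $I^\H=\theta\circ[\cdot,\cdot]$ along the curve of splittings. The plan is to expand $\sigma^\eps$ to first order through the decomposition $\xi=\jet^1\al+i(\omega)$: the holonomic part transports the foot-point along the flow of $\al^r$, while the symbol part $i(\omega)$ contributes the vertical correction $\eps\,R_{g_\eps}(\omega(\cdot))$ inside $\H^s$. Extending $X,Y$ to $s$-projectable sections of $\H$ as in the formula for $D$ from Theorem \ref{t2}, and inserting these into $\theta([\cdot,\cdot])$, the transport terms reorganize through $D_X\be=\theta([\tilde X,\be^r])\bmod\mathfrak{g}$ into $-D_X(\omega(Y))+D_Y(\omega(X))$, while the inner bracket $[X,Y]$ together with $\theta|_{\H}=0$ produces $l(\omega([X,Y]))$, giving the asserted expression (equivalently, the manifestly skew form $\kappa(\al,\omega)(X,Y)=-D_X(\omega(Y))+D_Y(\omega(X))+l(\omega([X,Y]))$). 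As a consistency check one verifies that the result is skew in $X,Y$ and $\C(M)$-linear in $\xi$ on $\Jet^1_D A$.

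I expect the main obstacle to be precisely this last differentiation: extracting the first-order behaviour of the curve of splittings $\sigma^\eps$ (a tangent vector to $T\G$ along $X\in T_{1_x}\G$), separating the foot-point motion governed by $[\al^r,-]$ from the genuinely vertical $\omega$-contribution, and arranging the resulting bracket identities so that the $\al$-dependent pieces cancel or are absorbed and only the symbol $\omega$ survives. Some care is also needed in differentiating a curve valued in the varying fibers of $t^{\ast}E$, which is legitimate here only because the curve starts at $0$.
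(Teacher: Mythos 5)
Your treatment of the first assertion is correct and coincides with the paper's argument: $\Jet^1_\H\G$ is the kernel of the groupoid cocycle of Proposition \ref{prop: multiplicative forms as cocycles} (for $k=1$, applied to $\theta_{\H}$), so its Lie algebroid is the kernel of the linearized cocycle $\eta(\al,\omega)=D(\al)+l\circ\omega$, which gives exactly $D_X(\al)=-l(\omega(X))$. (Your observation that the displayed formula for $\kappa$ should read $-D_X(\omega(Y))+D_Y(\omega(X))+l(\omega([X,Y]))$ is also right; the statement as printed contains a typo, and the paper's own computation produces the skew-symmetric expression.)

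For the second assertion there is a genuine gap at precisely the step you flag as ``the main obstacle'', and it is not just an omitted routine computation. Your plan is to differentiate $c(\sigma^\eps)(X,Y)=\Ad^{\H}_{g_\eps^{-1}}I^{\H}_{g_\eps}(\sigma^\eps(X),\sigma^\eps(Y))$ along the explicit curve whose holonomic part is transport by the flow of $\al^r$ and whose symbol part is the vertical correction $\eps\, R_{g_\eps}(\omega(\cdot))$. But that explicit curve does not stay in $\Jet^1_\H\G$: the flow of $\al^r$ need not preserve $\H$, and $R_{g_\eps}(\omega(X))$ lies in $T^s\G$ but not in $\H^s$, since $\omega$ is $A$-valued rather than $\mathfrak{g}$-valued --- the paper states explicitly that $\sigma^\eps$ is a curve in the $s$-fiber of $\Jet^1\G$, \emph{not} necessarily in $\Jet^1_\H\G$. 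Consequently $I^{\H}$, defined only on $\H\wedge\H$, cannot be evaluated along your curve, and $\theta([\cdot,\cdot])$ is not tensorial off $\H$, so ``differentiating the bracket along the curve of splittings'' is not well posed as stated. If instead you take an abstract curve genuinely inside $\Jet^1_\H\G$, you lose the explicit separation into the $\al$- and $\omega$-contributions on which your computation depends. The paper resolves this tension with a device absent from your proposal: it fixes an auxiliary connection $\nabla$ on $E$ (pulled back along $s$) and observes that on $\Jet^1_\H\G$ one has $c(\sigma_g)(X,Y)=\d^{\nabla}\tilde\theta(\sigma_g(X),\sigma_g(Y))$ with $\tilde\theta_g=\Ad^{\H}_{g^{-1}}\theta_g$; since $\d^{\nabla}\tilde\theta$ is tensorial on all of $T\G$, this formula extends $c$ smoothly beyond $\Jet^1_\H\G$ and can legitimately be differentiated along the explicit flow curve. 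The ensuing computation --- in which the $\al$-dependent terms cancel precisely because of the relation $D_Y(\al)=-l(\omega(Y))$ defining $\Jet^1_D A$ --- is the actual content of the lemma; your proposal asserts the outcome of these cancellations without supplying the mechanism that makes them happen.
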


\begin{proof} For the first part remark that $\Jet^{1}_{\H}\G$ is precisely the kernel of the cocycle $c$ from Proposition \ref{prop: multiplicative forms as cocycles}.
Hence its Lie algebroid is the kernel of the linearization of the cocycle, i.e. the kernel of the cocycle $\eta$ from Proposition \ref{prop: algebroid cocycles} ($k= 1$).
But, by construction of $D$, $\eta(\alpha, \omega)= D(\alpha)+ l\circ \omega$, hence the first part follows. 

For the second part, for the computations, it is better to consider
\[\tilde\theta_g = \Ad^\H_{g^{-1}}\theta_g \in \Omega^1(\G,s^{\ast}E). \]
We claim that, for any connection $\nabla$ on $E$, using the induced derivative operator $d^{\nabla}$, one has:
\begin{equation}\label{help-I} 
c(\sigma_g)(X, Y)= d^{\nabla}\tilde\theta (\sigma_g(X), \sigma_g(Y)).
\end{equation}
Indeed, Using the definition of $c$ and $\tilde\theta$, this reduces to $d^{\nabla}\tilde\theta (X, Y)= \tilde\theta ([X, Y])$ for all $X, Y\in \textrm{Ker}(\theta)$, which is clear.

We now compute the linearization $\kappa$. Let $(\alpha, \omega)$ represent a section $\zeta$ of $\Jet^{1}_{D}A$.
From the definition of $\kappa$, 
\[ \kappa(\alpha, \omega)(x) = (\d c)_{1_x}(\zeta_x).\]
Note that, thinking of elements of $\Jet^{1}_{D}A$ in terms of splittings, 
\[ \zeta_x= (\d \al)_x + \omega_x: T_xM\To T_{\alpha(x)}A, \ X\mapsto (\d \alpha)_{x}(X)+ \omega_x(X),\]
where $\omega_x(X)\in A_x$ is viewed inside $T_{\alpha(x)}A$ by the natural inclusion
\[ A_x\hookrightarrow T_{\alpha(x)}A, \ v\mapsto \frac{d}{d\eps}\big{|}_{\eps=0}(\alpha(x)+ \eps v).\]
To compute $(\d c)((\d\al)_x + \omega_x)$, we will consider the curve $\sigma^\eps_g: I \to s^{-1}(x) \subset \Jet^1\G$ given by
\[\sigma^\eps(X_x) = (\d\phi^\eps_{\al})_x(X_x + \eps\omega(X_x)),\]
for all $X_x \in T_xM$ and $\eps$ small enough (for $\phi_{\alpha}^{\epsilon}$, see Remark \ref{right-invariance}). Note that $\sigma^\eps$ is a curve in the $s$-fiber of $\Jet^1\G$ (not necessarily in $\Jet^1_\H\G$), whose derivative at $\eps = 0$ is $(\d \al)_x + \omega_x$. In fact, one has that
\[\begin{aligned}
\frac{d}{d\eps}\big{|}_{\eps=0}\sigma_g^\eps(X_x) &= \frac{d}{d\eps}\big{|}_{\eps=0}( (\d\phi^\eps_{\al})_x(X_x) + \eps\cdot (\d \phi^\eps_{\al})_x(\omega(X_x))) \\
&= (\d\al)_x(X_x) +(\d\phi^0_{\al})_x\omega(X_x)\\
& = ((\d\al)_x + \omega_x)(X_x).
\end{aligned}\]

Next, we fix a splitting of $\d s: \H \to s^{\ast}TM$, and for each $X\in\X(M)$ we denote by $\tilde{X}\in \Gamma(\H)$ the corresponding horizontal lift. Then
\[\tilde\sigma^\eps(X)_g = (\d\varphi^\eps_{\al^r})_{\varphi_{\al^r}^{-\eps}(g)}(\tilde X + \eps \omega(X)^r)\]
defines an extension of $\sigma^\eps(X_x)$ to $\X(\G)$.

From the equation (\ref{help-I}) we deduce that 
\begin{eqnarray*}\label{derivada}
\kappa(\alpha, \omega)(X_x,Y_x)= \frac{d}{d\eps}\big{|}_{\eps=0}\d^\nabla\tilde\theta_{g_\epsilon}(\tilde\sigma^\epsilon(X),\tilde\sigma^\epsilon(Y))(x),
\end{eqnarray*}
for all $X,Y\in\X(M)$.

Finally, to perform the computation, we let $\nabla$ be the pull-back via $s$ of a connection on $E$ (which we also denote by $\nabla$). We have:
\[\begin{aligned}
\d_{\nabla^s}\tilde\theta_{g_\eps}(\tilde\sigma^\eps(X),\tilde\sigma^\eps(Y))&=\nabla_{\tilde\sigma^\eps(X)}\tilde\theta(\tilde\sigma^\eps(Y))-\nabla_{\tilde\sigma^\eps(Y)}\tilde\theta(\tilde\sigma^\eps(X))-\tilde\theta([\tilde\sigma^\eps(X),\tilde\sigma^\eps(Y)])\\
&=\eps\nabla_{\tilde\sigma^\eps(X)}\tilde\theta(\d\varphi^\eps_{\alpha^r}(\omega(Y)^r))+\nabla_{\tilde\sigma^\eps(X)}\tilde\theta(\d\varphi^\eps_{\alpha^r}(\tilde Y))\\
&\quad-\eps\nabla_{\tilde\sigma^\eps(Y)}\tilde\theta(\d\varphi^\eps_{\alpha^r}(\omega(X)^r))-\nabla_{\tilde\sigma^\eps(Y)}\tilde\theta(\d\varphi^\eps_{\alpha^r}(\tilde X))\\
&\quad-\eps^2\tilde\theta([\d\varphi^\eps_{\alpha^r}(\omega(X)^r),\d\varphi^\eps_{\alpha^r}(\omega(Y)^r)])-\eps\tilde\theta(\d\varphi^\eps_{\alpha^r}[\omega(X)^r,\tilde Y])\\
&\quad+\eps\tilde\theta(\d\varphi^\eps_{\alpha^r}[\omega(Y)^r,\tilde X])-\tilde\theta(\d\varphi^\eps_{\alpha^r}[\tilde X,\tilde Y])
\end{aligned}\]
We now take the derivative when $\eps=0$ and evaluate the expression at $x$. Using the fact that $\nabla$ is the pull-back of a connection on $E$, the first term of the right hand side of the second equality gives us
\[\begin{aligned}
\nabla_{\tilde\sigma^0(X)}\tilde\theta(\omega(Y)^r)(x) = \nabla_X\tilde\theta(\omega(Y))(x)=\nabla_Xl(\omega(Y))(x),
\end{aligned}\]  
while the second term gives
\[\begin{aligned}\frac{d}{d\eps}\big{|}_{\eps=0}\nabla_{\tilde\sigma^\eps(X)}\tilde\theta(\d\varphi^\eps_{\alpha^r}(\tilde Y))(x) &=\nabla_{\frac{d}{d\eps}\tilde\sigma^\eps(X)}\tilde\theta(\tilde Y)(x)+\nabla_{\tilde X}\frac{d}{d\eps}\big{|}_{\eps=0}(\varphi^\eps_{\alpha^r})^*\tilde\theta(\tilde Y)(x)\\
&=\nabla_X\frac{d}{d\eps}\big{|}_{\eps=0}(\phi^\eps_{\alpha})^*\tilde\theta(Y)(x)\\
&=\nabla_XD_Y\alpha,
\end{aligned}\]
where in the passage from the first to the second line we have used that $\tilde\theta(\tilde Y) = 0$ because $\tilde Y\in \Gamma(\H)$. It then follows from $D_Y(\al) = -l(\omega(Y))$, that the the first line of the expression vanishes. The same argument shows also that the second line of the expression is equal to zero. So we are left with calculating the last  three terms of the expression. We obtain:
\[\begin{aligned}
-\tilde\theta([\omega(X)^r,\tilde Y])(x) + \tilde\theta([\omega(Y)^r,\tilde X])(x) - \frac{\d}{\d \eps}\big{|}_{\eps = 0}\tilde\theta(\d\varphi^\eps_{\alpha^r}[\tilde X,\tilde Y])(x).
\end{aligned}\] 
From the first two terms we obtain
\[D_Y(\omega(X))-D_X(\omega(Y)).\]
Finally, for the last term we use the fact that $\tilde X$ and $\tilde Y$ are projectable extensions of $X$ and $Y$ to obtain
\begin{eqnarray*}
\frac{d}{d\eps}\big{|}_{\eps=0}\tilde\theta(\d\varphi^\eps_{\alpha^r}[\tilde X,\tilde Y])(x)=\frac{d}{d\eps}\big{|}_{\eps=0}\tilde\theta(\d\varphi^\eps_{\alpha^r}\circ\d u([X,Y]_x))=\frac{d}{d\eps}\big{|}_{\eps=0}\tilde\theta(\d\phi^\eps_{\alpha}[X,Y]_x)\\=\frac{d}{d\eps}\big{|}_{\eps=0}(\Ad^\H_{\phi^\eps_{\alpha}})^{-1}\theta(\d\phi^\eps_{\alpha}[X,Y]_x)=D_{[X,Y]}\alpha(x)=-l(\omega([X,Y]_x)).
\end{eqnarray*}
Putting these pieces together concludes the proof of the proposition.
\end{proof}\\

We deduce:

\begin{corol} Assume that $j_{\mathfrak{g}}= 0$ and consider the induced connection $\nabla^{E}$ on $E$ ($\nabla^{E}_{X}(l(\alpha))= D_X(\alpha)$). Then 
\[ \kappa(\alpha, \omega)(X, Y)= \nabla^{E}_{X}\nabla^{E}_{Y}(\alpha)- \nabla^{E}_{Y}\nabla^{E}_{X}\alpha- \nabla^{E}_{[X, Y]}(l(\alpha)),\]
hence $k$ vanishes if and only if $\nabla^{E}$ is flat.
\end{corol}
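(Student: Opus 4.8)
The plan is to obtain the identity by a direct reinterpretation of the formula for $\kappa$ established in the preceding lemma, using only the defining property of $\Jet^{1}_{D}A$ and the definition of $\nabla^{E}$. Recall that a section $(\alpha, \omega)$ of $\Jet^{1}_{D}A$ is characterized by $D_{X}(\alpha) = -l(\omega(X))$ for all $X \in \X(M)$, and that the lemma gives
\[ \kappa(\alpha, \omega)(X, Y) = D_{Y}(\omega(X)) - D_{X}(\omega(Y)) + l(\omega([X,Y])). \]
Since $j_{\mathfrak{g}} = 0$, the operator $\nabla^{E}_{X}(l(\alpha)) = D_{X}(\alpha)$ is well defined, because $j_{\mathfrak{g}}(\beta)(X) = D_{X}(\beta)$ vanishes for $\beta \in \Gamma(\mathfrak{g}) = \Gamma(\ker l)$. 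The strategy is then to rewrite each of the three terms above as an iterated covariant derivative of $l(\alpha)$.

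Carrying this out, the defining relation $D_{X}(\alpha) = -l(\omega(X))$ reads $\nabla^{E}_{X}(l(\alpha)) = -l(\omega(X))$. Applying $\nabla^{E}_{Y}$ and using $\nabla^{E}_{Y}(l(\beta)) = D_{Y}(\beta)$ on the section $\beta = \omega(X)$ of $A$, I would compute
\[ \nabla^{E}_{Y}\nabla^{E}_{X}(l(\alpha)) = -\nabla^{E}_{Y}(l(\omega(X))) = -D_{Y}(\omega(X)), \]
and symmetrically $\nabla^{E}_{X}\nabla^{E}_{Y}(l(\alpha)) = -D_{X}(\omega(Y))$, while $\nabla^{E}_{[X,Y]}(l(\alpha)) = D_{[X,Y]}(\alpha) = -l(\omega([X,Y]))$. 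Substituting these three expressions into the curvature $R^{\nabla^{E}}(X,Y) = \nabla^{E}_{X}\nabla^{E}_{Y} - \nabla^{E}_{Y}\nabla^{E}_{X} - \nabla^{E}_{[X,Y]}$ applied to $l(\alpha)$ reproduces exactly $D_{Y}(\omega(X)) - D_{X}(\omega(Y)) + l(\omega([X,Y]))$, which is $\kappa(\alpha, \omega)(X,Y)$. This establishes the displayed formula.

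For the equivalence, I would observe that $l \colon A \to E$ is surjective and that every $\alpha \in \Gamma(A)$ extends to a section $(\alpha, \omega)$ of $\Jet^{1}_{D}A$: fixing a linear splitting of $l$, one solves $l(\omega(X)) = -D_{X}(\alpha)$ for a smooth $\omega \in \Omega^{1}(M,A)$. Hence, as $(\alpha, \omega)$ ranges over sections of $\Jet^{1}_{D}A$, the element $l(\alpha)$ ranges over all of $\Gamma(E)$, so the vanishing of $\kappa$ is equivalent to $R^{\nabla^{E}}(X,Y)(e) = 0$ for all $e \in \Gamma(E)$ and all $X, Y$, i.e. to flatness of $\nabla^{E}$.

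This proof is essentially a bookkeeping of the formula already at hand, so there is no serious conceptual obstacle; the only points demanding care are the sign tracking in the three iterated derivatives and the surjectivity argument of the last paragraph, which is what upgrades ``$\kappa$ vanishes on all of $\Jet^{1}_{D}A$'' to the genuine flatness of $\nabla^{E}$ (rather than to a weaker vanishing on a proper subbundle).
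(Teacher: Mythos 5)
Your proof is correct and follows essentially the same route as the paper's: you rewrite the three terms of the lemma's formula for $\kappa$ (in its corrected form $D_Y(\omega(X)) - D_X(\omega(Y)) + l(\omega([X,Y]))$) via the defining relations $\nabla^{E}_{Z}(l(\beta)) = D_{Z}(\beta)$ and $l(\omega(Z)) = -D_{Z}(\alpha)$, obtaining the curvature of $\nabla^{E}$ applied to $l(\alpha)$. Your final paragraph, making explicit that every $\alpha \in \Gamma(A)$ extends to a section of $\Jet^{1}_{D}A$ and that $l$ is surjective, is a worthwhile elaboration of a step the paper leaves implicit in asserting the equivalence with flatness.
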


\begin{proof} From the formula for $\kappa(\alpha, \omega)(X, Y)$ from the previous lemma, we obtain
\[ - \nabla^{E}_{X}(l\omega(Y))+ \nabla^{E}_{Y}(l\omega(X))+ l\omega([X, Y]).\]
Using that $l\circ \omega(Z)= - D_Z(\alpha)= -\nabla^{E}_{Z}(l(\alpha))$, we obtain the formula from the statement.
\end{proof}

The following closes the proof of Theorem \ref{t3}.

\begin{lemma}\label{lemma: passing to algebroid} 
If $j_{\mathfrak{g}}= 0$ and $\G$ has connected source fibers, then $c= 0$ if and only if $\kappa= 0$. 
\end{lemma}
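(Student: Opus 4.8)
The plan is to recognise the equivalence $c=0\Leftrightarrow\kappa=0$ as an instance of the injectivity of the van Est map on $1$-cocycles, applied to the groupoid $\Jet^1_\H\G$ with coefficients in $\hom(\Lambda^2TM,E)$. The implication $c=0\Rightarrow\kappa=0$ is immediate: by the previous lemma, $\kappa$ is obtained from $c$ by differentiation at the units, $\kappa(\zeta)(x)=(\d c)_{1_x}(\zeta_x)$, so it vanishes as soon as $c$ does. The content of the statement is therefore the converse, and that is where source-connectedness enters.

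For the converse, the first point is that $\kappa$ is exactly the van Est image of $c$. Indeed, $c$ is a $1$-cocycle along $s$ (Lemma \ref{cocycle}) and, comparing the formula $\kappa(\zeta)(x)=(\d c)_{1_x}(\zeta_x)$ with Remark \ref{along-s}, we have $\kappa=\ve(c)$. Thus it suffices to show that $\ve$ is injective on $Z^1(\Jet^1_\H\G,\hom(\Lambda^2TM,E))$. Although the full isomorphism of Proposition \ref{prop: Van Est} requires $s$-simple connectedness, injectivity needs only connectedness of the source fibers: following the proof of that proposition, $c$ (passed to an ordinary cocycle $\bar c$ as in Remark \ref{along-s}) corresponds to a groupoid morphism into the relevant semidirect product, whose linearization is determined by $\ve(c)$; if $\ve(c)=0$ this linearization coincides with that of the trivial morphism, and the uniqueness part of Lie II — which holds for source-connected groupoids — forces the two morphisms, hence $c$, to agree, so $c=0$.

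It remains to verify that the source fibers of $\Jet^1_\H\G$ are connected, given that those of $\G$ are. The canonical projection $\pr\colon\Jet^1_\H\G\to\G$ restricts, over a fixed source fiber $s^{-1}(x)\subset\G$, to a surjective submersion onto it (the submersion property coming from $s$-transversality of $\H$); its fiber over $g$ is the set of splittings $\sigma_g\colon T_{s(g)}M\to\H_g$ of $\d s|_{\H_g}$, which is an affine space modelled on $\hom(T_{s(g)}M,\H^s_g)\cong\hom(T_{s(g)}M,\mathfrak{g}_{t(g)})$ and in particular connected. Hence connectedness of $s^{-1}(x)\subset\G$ propagates to connectedness of the corresponding source fiber of $\Jet^1_\H\G$, and the injectivity of $\ve$ applies, giving $c=0$.

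The main obstacle lies entirely in the converse direction, and within it in the two points that are not purely formal: justifying that injectivity of van Est survives under mere source-connectedness — rather than the simple connectedness used to obtain the isomorphism in Proposition \ref{prop: Van Est} — and checking the connectedness of the source fibers of $\Jet^1_\H\G$ via the affine-fibre structure of $\pr$. Once these are in place, the equivalence follows formally, since the forward direction is a triviality.
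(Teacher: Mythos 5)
There is a genuine gap in the converse direction, and it sits exactly where your argument claims the source fibers of $\Jet^1_\H\G$ are connected. The fiber of $\pr\colon\Jet^1_\H\G\to\G$ over $g$ is \emph{not} the full affine space of splittings of $\d s|_{\H_g}$: by the definition of $\Jet^1\G$ (condition (\ref{comp}) in Remark \ref{when working with jets}), one must also require that $\lambda_{\sigma_g}=\d t\circ\sigma_g\colon T_{s(g)}M\to T_{t(g)}M$ be an isomorphism. The fiber is therefore only an \emph{open subset} of an affine space, cut out by an invertibility condition, and such sets are typically disconnected — already for the pair groupoid $M\times M$ with $\H=T\G$ one gets $\Jet^1_\H\G=\Pi^1(M)$, whose fibers over a point are a torsor under $\GL_n(\Rr)$ and have two components. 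So connectedness of the $s$-fibers of $\G$ does \emph{not} propagate to $\Jet^1_\H\G$, and you cannot apply van Est injectivity to $\Jet^1_\H\G$ directly. A tell-tale symptom is that your proof never uses the hypothesis $j_{\mathfrak{g}}=0$; if your connectedness claim were correct, the lemma would hold without it.

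The paper's route is designed precisely to get around this. The hypothesis $j_{\mathfrak{g}}=0$ guarantees that $c(\sigma_g)$ depends only on $g$, i.e.\ $c=\pr^{\ast}c_0$ for some $c_0$ defined on $\G$ itself. One then applies the van Est isomorphism on the source-simply-connected groupoid $\widetilde{\Jet^1_\H\G}$ (where Proposition \ref{prop: Van Est} genuinely applies) to the pullback $\widetilde{\pr}^{\ast}c_0$, whose linearization is $\kappa$; the remaining point is that $\widetilde{\pr}^{\ast}c_0=0$ forces $c_0=0$, which follows because $(\Jet^1_\H\G)^0\to\G$ is surjective — a consequence of the $s$-connectedness of $\G$ and the fact that a groupoid morphism induces principal-bundle maps between $s$-fibers. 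Your observation that van Est injectivity on $Z^1$ only needs source-connectedness (via the uniqueness part of Lie II) is correct in itself, but it is applied to the wrong groupoid; to repair the proof you must either pass to the cover and descend $c$ to $\G$ as the paper does, or find another way to control $c$ on the components of the $\pr$-fibers not containing the chosen splitting, and either way the hypothesis $j_{\mathfrak{g}}=0$ has to enter.
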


\begin{proof}
Again, we want to apply the van Est isomorphism of Proposition \ref{prop: Van Est} and the problem is similar to the one from the previous section (Subsection \ref{sec: step 2}): the fibers of $\jet^{1}_{\H}\G$ may fail to be $1$-connected. And as there, consider the sequence of groupoids associated to $\Jet^1_\H\G$:
\[ \widetilde{\Jet^1_\H\G}\twoheadrightarrow (\Jet^1_\H\G)^0\hookrightarrow \Jet^1_\H\G \stackrel{\pr}{\To} \G,\]
and we denote by $\pr^{0}$, $\widetilde{\pr}$ the resulting maps from $(\Jet^1_\H\G)^0$ and $\widetilde{\Jet^1_\H\G}$ to $\G$. The situation is simpler here because, as
as we already remarked, $c$  as a section lives already on $\G$: $c= \pr^{\ast}(c_0)$. We can apply the van Est isomorphism to $\widetilde{\pr}^{\ast}(c_0)$, hence it suffices to 
remark that, since $\G$ is $s$-connected, $\pr^{0}$ (hence also $\widetilde{\pr}$) is surjective. This is a general fact about Lie groupoid morphisms. To see this, note that each $s$-fiber of a groupoid is principal bundle, and that a morphism of Lie groupoids induces a principal bundle map between the corresponding $s$-fibers. But the $s$-fibers of $\Jet^1_\H\G$ are mapped surjectively to the $s$-fibers of $\G$ (which are connected!), so it follows that also the restriction to the connected components of the $s$-fibers of  $\Jet^1_\H\G$ are mapped surjectively to the $s$-fibers of $\G$.
\end{proof}\\


\bibliographystyle{abbrv}
\bibliography{bibliography}

\end{document}